\renewcommand\thefigure{\thesection.\@arabic\c@figure}
\renewcommand\thetable{\thesection.\@arabic\c@table}
\newtheorem{theorem}{Theorem}[section]
\newtheorem{lemma}[theorem]{Lemma}
\newtheorem{proposition}[theorem]{Proposition}
\newtheorem{corollary}[theorem]{Corollary}
\newtheorem{remark}[theorem]{Remark}
\newcommand{\mc}[1]{{\mathcal #1}}
\newcommand{\mf}[1]{{\mathfrak #1}}
\newcommand{\mb}[1]{{\mathbf #1}}
\newcommand{\bb}[1]{{\mathbb #1}}
\newcommand{\ms}[1]{{\mathscr #1}}
\newcommand{\bs}[1]{{\boldsymbol #1}}
\newcommand{\p}{\partial}
\newcommand{\<}{\langle}
\renewcommand{\>}{\rangle}
\title[Nonequilibrium fluctuations of weakly asymmetric exclusion
processes] {Nonequilibrium fluctuations of one-dimensional boundary
  driven weakly asymmetric exclusion processes}
\author{Patr\'{\i}cia Gon\c{c}alves} \address{Departamento de
  Matem\'atica, PUC-RIO, Rua Marqu\^es de S\~ao Vicente, no. 225,
  22453-900, Rio de Janeiro, Rj-Brazil and CMAT, Centro de
  Matem\'atica da Universidade do Minho, Campus de Gualtar, 4710-057,
  Braga, Portugal} \curraddr{} \email{patricia@mat.puc-rio.br}
\author{Claudio Landim}
\address{\noindent IMPA, Estrada Dona Castorina 110, CEP 22460 Rio de
  Janeiro, Brasil and CNRS UMR 6085, Universit\'e de Rouen, Avenue de
  l'Universit\'e, BP.12, Technop\^ole du Madril\-let, F76801
  Saint-\'Etienne-du-Rouvray, France.}
\email{landim@impa.br}
\author{Aniura Milan\'es}
\address{Departamento de Matem\'atica, ICEx, UFMG,
Campus Pampulha, CEP 31270-901, Belo Horizonte, Brasil.}
\email{aniura@mat.ufmg.br}
\begin{document}

\begin{abstract}
  We prove the nonequilibrium fluctuations of one-dimensional,
  boundary driven, weakly asymmetric exclusion processes through a
  microscopic Cole-Hopf transformation.
\end{abstract}

\maketitle
\section{Introduction}

Nonequilibrium fluctuations of interacting particle systems around the
hydrodynamic limit is one of the main open problems in the field. It
has only been derived for few one-dimensional dynamics and no progress
has been made in the last twenty years. We refer to the last section
of \cite[Chapter 11]{kl} for references and an historical account.

We examine in this article the dynamical nonequilibrium fluctuations
of one-dimensional weakly asymmetric exclusion processes in contact
with reservoirs. In a future work, following the strategy presented in
\cite{lmo1} for the symmetric simple exclusion process, we use the
results presented here to prove the stationary fluctuations of the
density field.

The motivations are twofold. On the one hand, the investigation of the
steady states of boundary driven interacting particle systems has
attracted a lot of attention in these last fifteen years, mainly after
\cite{dls, bdgjl02}. The density fluctuations at the steady state is
an important part of the theory and it can only be seized through the
dynamical nonequilibrium fluctuations \cite{lmo1}. On the other hand,
several published results \cite{delo1} still wait for rigorous proofs.

Denote by $\mu^N_{\rm ss}$ a stationary state of a one-dimensional
weakly asymmetric exclusion processes in contact with reservoirs.  The
stationary density fluctuation field, denoted by $\bb Y_N$, acts on
smooth functions $H: [0,1]\to\bb R$ as
\begin{equation*}
\bb Y_N (H)\;=\; \frac 1{\sqrt N} \sum_{k=1}^{N-1} H(k/N) [\eta_k -
\theta_N(k)]\;, 
\end{equation*}
where $\eta$ represents a configuration and $\theta_N(k)=
E_{\mu^N_{\rm ss}}[\eta_k]$. Not much information is available on
$\theta_N(k)$, besides discrete difference equations which involve
second-order covariance terms. It follows from Theorem \ref{s2.1}
below and some straightforward arguments, presented in \cite{lmo1} in
the case of boundary driven symmetric simple exclusion processes, that
we may replace $\theta_N(k)$ by $\bar \rho(k/N)$ in the definition of
the density fluctuation field, where $\bar \rho$ is the solution of
the stationary hydrodynamic equation, provided
\begin{equation*}
\frac {1 }{\sqrt{N}} \sum_{k=1}^{N-1} H(k/N)
\big\{\bar \rho(k/N) - \theta_N(k) \big\} 
\end{equation*}
is uniformly bounded. Note that we do not need to prove that this
expression vanishes in the limit, as one would expect from the
definition of the density fluctuation field, but just that it is
uniformly bounded.

The proof of the nonequilibrium density fluctuations we present here
relies on a microscopic Cole-Hopf transformation introduced by
G\"artner \cite{g} to investigate the hydrodynamic behavior of weakly
asymmetric exclusion processes on $\bb Z$, and used by Dittrich and
G\"artner \cite{dg} to prove the nonequilibrium fluctuations of the
same models.

As in PDE, the microscopic Cole-Hopf transformation turns a nonlinear
problem involving local functions into a linear one. For this reason,
it permits to avoid proving a nonequilibrium Boltzmann-Gibbs principle
\cite[Section 11.1]{kl}, introduced by H. Rost \cite{br1}, which is
the main technical difficulty in the proof of density fluctuations.

The proof of the nonequilibrium fluctuations relies on sharp estimates
of the moments of the microscopic Cole-Hopf variables, and on sharp
estimates of the fundamental solution of initial-boundary value
semi-discrete linear partial differential equations. These results are
presented in the last two sections of this article. The bounds on the
fundamental solutions are derived in a similar way as
hypercontractivity is proven for ergodic Markov chains.

\section{Notation and Results}

\subsection{The model}
\label{bdep}

Fix $E>0$, $\alpha$, $\beta$ in $(0,1)$ and $N\ge 1$. Denote by
$\{\eta^N_t : t\ge 0\}$, the speeded-up, one-dimensional, boundary
driven, weakly asymmetric simple exclusion process with state space
$\Sigma_N=\{0,1\}^{\{1, \dots, N-1\}}$. The configurations of the
state space are denoted by the symbol $\eta$, so that $\eta(j)=1$ if
site $j$ is occupied for the configuration $\eta$ and $\eta(j)=0$ if
site $j$ is empty. The infinitesimal generator of the Markov process
is denoted by $\mathcal{L}_N$ and acts on functions $f: \Sigma_N\to\bb
R$ as
\begin{equation*}
(\mathcal{L}_N f)(\eta)\;=\;  N^2 \sum_{j=0}^{N-1} c_{j,j+1}(\eta)
\, \{ f(\sigma^{j,j+1} \eta)-f(\eta)\} \;,
\end{equation*}
where, for $1\le j\le N-2$,
\begin{equation*}
\begin{split}
&  c_{j,j+1}(\eta) \;=\; \Big( 1 + \frac E N \Big) \eta(j)
\, [1-\eta(j+1)] \; +\; \eta(j+1)\, [1-\eta(j)] \;, \\
&\quad c_{0,1}(\eta) \;=\; \Big( 1 + \frac E N \Big) \, \eta(0) \, [1-\eta(1)]
+  \eta(1) \, [1-\eta(0)] \;, \\
&\qquad c_{N-1,N}(\eta) \;=\; \Big( 1 + \frac E N \Big)\, \eta(N-1)
\, [1-\eta(N)] \;+\; \eta(N)\, [1-\eta(N-1)] \;,
\end{split}
\end{equation*}
with the convention, adopted throughout the article, that
\begin{equation}
\label{01}
\eta(0) \;=\; \alpha\;, \quad \eta(N) \;=\; \beta\;.
\end{equation}
In these formulas, $\sigma^{j,j+1} \eta$, $1\le j\le N-2$, is the
configuration obtained from $\eta$ by exchanging the occupation
variables $\eta(j)$, $\eta(j+1)$, 
\[
(\sigma^{j,j+1} \eta)(k) =
\begin{cases}
\eta(j+1),& k=j\;,\\
\eta(j), & k=j+1\;,\\
\eta(k), & k\neq j,j+1\;,\\
\end{cases}
\]
while $\sigma^{0,1} \eta = \sigma^{1}\eta$, $\sigma^{N-1,N} \eta =
\sigma^{N-1}\eta$ are the configurations obtained by flipping the
occupation variables $\eta(1)$, $\eta(N-1)$, respectively,
\[
(\sigma^j\eta)(k) =
\begin{cases}
\eta(k),& k\neq j\;, \\
1-\eta(k), & k=j\,.\\
\end{cases}
\]

\subsection{Hydrodynamic limit}
\label{HL}

Let $D(\mathbb{R}_+,\Sigma_N)$ be the space of $\Sigma_N$-valued
functions which are right continuous with left limits, endowed with
the Skorohod topology. For a probability measure $\mu_N$ on
$\Sigma_N$, denote by $\mathbb {P}_{\mu_N}$ the measure on
$D(\mathbb{R}_+,\Sigma_N)$ induced by the Markov process $\eta^N_t$
with initial distribution $\mu_N$. We represent by
$\mathbb{E}_{\mu_N}$ the expectation with respect to $\mathbb
{P}_{\mu_N}$ and by $E_{\mu_N}$ the expectation with respect to
$\mu_N$.

Let $\pi_t^N(du)$, $t\ge 0$, be the positive random measure on $[0,1]$
obtained by rescaling space by $N^{-1}$ and by assigning mass $N^{-1}$
to each particle:
\[
\pi_t^N(dx) \;=\; \frac{1}{N} \sum_{j=1}^{N-1} \eta^N_t(j)\,
\delta_{j/N}(dx) \;,
\]
where $\delta_{j/N}$ is the Dirac mass at $j/N$. 

Fix a measurable density profile $\rho_0: [0,1] \to [0,1]$ and let
$\{\mu_N : N \ge 1\}$ be a sequence of probability measures on
$\Sigma_N$ associated to $\rho_0$ in the sense that for every
continuous function $G:[0,1]\rightarrow{\mathbb{R}}$ and every
$\delta>0$,
\begin{equation*}
\lim_{N\rightarrow{+\infty}}\mu_N\Big(\Big|\frac{1}{N}
\sum_{k=1}^{N-1}G (k/N)\eta(k)-
\int_{0}^{1}G(x)\rho_{0}(x)\, dx\Big|>\delta\Big)=0\;.
\end{equation*}
Then, for each $t\ge 0$, $\pi_t^N$ converges in $\mathbb{
  P}_{\mu_N}$-probability to a measure which is absolutely continuous
with respect to the Lebesgue measure and whose density $\rho(t,x)$ is
the unique weak solution of the viscous Burgers equation with
Dirichlet's boundary conditions:
\begin{equation}
\label{vBe}
\left\{
\begin{array}{l}
\partial_t \rho = \partial_x^2\rho-E\,\partial_x b (\rho)\;, \\
\rho(t,0) = \alpha\;, \quad \rho(t,1) = \beta\;, \quad t\ge 0 \\
\rho(0,x) = \rho_0(x)\; ,\quad 0\leq x \leq 1\;,
\end{array}
\right.
\end{equation}
where $b(\rho) = \rho(1-\rho)$. We refer to \cite{g, DMPS, kl, blm1,
  flm1} and references therein.

\subsection{Nonequilibrium fluctuations.}
\label{neq}

To define the space in which the fluctuations take place, denote by
$C^2_0([0,1])$ the space of twice continuously differentiable
functions on $(0,1)$ which are continuous on $[0,1]$ and which vanish
at the boundary.  Let $- \Delta$ be the positive operator, essentially
self-adjoint on $L^2[0,1]$, defined by
\begin{equation*}
- \Delta\;=\; -\frac{d^2}{dx^2}\;, \quad
\mathcal{D}(-\Delta) \;=\; C^2_0([0,1]) \;. 
\end{equation*}
Its eigenvalues and corresponding (normalized) eigenfunctions have the
form $\lambda_n=(n\pi)^2$ and $e_n(x)=\sqrt{2}\sin(n\pi x)$
respectively, for any $n\ge 1$. By the Sturm-Liouville theory,
$\{e_n,\;n\ge 1\}$ forms an orthonormal basis of $L^2[0,1]$.

We denote with the same symbol the closure of $-\Delta$ in $L^2[0,1]$.
For any nonnegative integer $k$, we define the Hilbert spaces
$\mathcal{H}_k=\mathcal{D}(\{-\Delta\}^{k/2})$, with inner product
$(f,g)_k=(\{-\Delta\}^{k/2}f$, $\{-\Delta\}^{k/2}g)$, where $(\cdot
,\cdot )$ is the inner product in $L^2[0,1]$. By the spectral
theorem for self-adjoint operators,
\begin{equation*}
\mathcal{H}_k \;=\; \{f\in L^2[0,1]:\;\sum_{n=1}^{+\infty}n^{2k}
(f,e_n)^2<\infty\}\; ,
\end{equation*}
\begin{equation*}
(f,g)_k \;=\; \sum_{n=1}^{+\infty}(n\pi)^{2k}(f,e_n)(g,e_n)\;.
\end{equation*}

Moreover, if $\mathcal{H}_{-k}$ denotes the topological dual space of
$\mathcal{H}_k$,
\begin{eqnarray*}
\mathcal{H}_{-k} \;=\; \{f\in\mathcal{D}'(0,1):\;\sum_{n=1}^{+\infty}n^{-2k}
\<f,e_n\>^2<\infty\},\\ 
(f,g)_{-k} \;=\; \sum_{n=1}^{+\infty}(n\pi)^{-2k} \<f,e_n\> \<g,e_n\>,
\end{eqnarray*}
where $\mathcal{D}'(0,1)$ represents the space of distributions on
$(0,1)$ and $\<f,\cdot\>$ the action of the distribution $f$ on test
functions.

Fix a continuous density profile $\rho_0 :[0,1]\to [0,1]$, and denote
by $\rho(t,x)$ the unique weak solution of the viscous Burgers
equation \eqref{vBe}. Let $Y^N_t$ represent the density fluctuation
field which acts on functions $H$ in $C^1([0,1])$ as
\begin{equation*}
Y^N_t (H) \;=\; \frac 1{\sqrt N} \sum_{k=1}^{N-1} H(k/N)
\{ \eta_{t}(k) - \rho (t,k/N)\}\;. 
\end{equation*}

Fix $t>0$ and a function $G$ in $C^2_0([0,1])$. Recall that we denote
by $\rho(s,x)=\rho_s(x)$ the solution of the viscous Burgers equation
\eqref{vBe}. Let $(T_{t,s} G) (x) = G(s,x)$, $0\le s\le t$, be the
solution of the backward linear equation with final condition
\begin{equation} 
\label{f2.2}
\left\{
\begin{array}{l}
-\partial_s G  = \partial_x^2 G + E(1-2\rho_s) \partial_x G\;, \\
G(t, x)= G(x)\;, \quad  0\leq{x}\leq{1} \;, \\
G(s,0)=G(s,1)=0\;,\quad 0\leq{s}\leq{t}\;.
\end{array}
\right.
\end{equation}

Denote by $D([0,T], \mc H_{-k})$ the set of trajectories $Y:[0,T]\to
\mc H_{-k}$ which are right continuous and have left limits, endowed
with the Skorohod topology. 

\begin{theorem}
\label{s2.1}
Fix $T>0$, a positive integer $k> 7/2$, and a density profile $\rho_0
: [0,1]\to [0,1]$ in $C^4([0,1])$ such that $\rho_0(0)=\alpha$,
$\rho_0(1)=\beta$. Let $\{\mu_N : N\ge 1\}$ be a sequence of probability
measures on $\Sigma_N$ for which there exists a finite constant $A_2$
such that
\begin{equation}
\label{08}
\sup_{N\ge 1}\, \max_{1\le k\le N-1}  E_{\mu_N}
\Big[ \Big( \frac {1 }{\sqrt{N}} \sum_{j=1}^k
\big\{ \eta_0(j) - \rho_0(j/N) \big\} \Big)^4 \Big] \;\le\; A_2\;. 
\end{equation} 
Let $Q_N$ be the probability measure on $D([0,T], \mc H_{-k})$ induced
by the density fluctuation field $Y^N$ and the probability measure
$\mu_N$. Then, all limit points $Q^*$ of the sequence $Q_N$ are
concentrated on paths $Y$ such that for all $t\ge 0$ and $G$ in
$C^5_0([0,1])$,
\begin{equation*}
W(t,G)\;:=\;  Y_t (G) \;-\; Y_0(T_{t,0}G) 
\end{equation*}
are mean-zero Gaussian random variables with covariances given by
\begin{equation}
\label{f2.6}
E_{Q^*}[W(t,G) \, W(s,H)] \;=\; 
2 \int_{0}^{t\wedge s} \int_0^1 \sigma(\rho(r,x))\, (\partial_ x
T_{t,r} G)(x)\, (\partial_ x T_{s,r} H)(x) \, dx \, dr\;,
\end{equation}
for all $0\le s,t\le T$.  In this formula, $\sigma(\rho)$ represents
the mobility which is given by $\sigma(\rho) = \rho(1-\rho)$.
Moreover, for all $G$ and $H$ in $C^5_0([0,1])$, and $t>0$,
\begin{equation*}
E_{Q^*}[W(t,G) \, Y_0(H)] \;=\; 0\;.
\end{equation*}
\end{theorem}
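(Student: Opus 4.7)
The plan is the standard martingale approach to density fluctuations, with the G\"artner Cole--Hopf transformation used in place of a nonequilibrium Boltzmann--Gibbs principle to control the nonlinear current term. Fix $G\in C^5_0([0,1])$ and $t>0$; set $G_s := T_{t,s}G$ and apply Dynkin's formula to the process $s\mapsto Y^N_s(G_s)$. A discrete summation by parts, combined with the backward equation \eqref{f2.2} satisfied by $G_s$, makes the linear-in-$\eta$ part of $\mc L_N Y^N_s(G_s)$ cancel $\partial_s Y^N_s(G_s)$ up to boundary and discretization errors of order $O(N^{-1/2})$ in $L^2$. What survives is
\begin{equation*}
W^N(t,G)\;:=\;Y^N_t(G)-Y^N_0(T_{t,0}G) \;=\; M^N_t(G) \;+\; \int_0^t R^N_r(G)\, dr \;+\; o_{L^2}(1),
\end{equation*}
where $M^N_\cdot(G)$ is a martingale with bracket $\int_0^t \frac{1}{N}\sum_k c_{k,k+1}(\eta_r)[N\nabla_N G_r(k/N)]^2\, dr$, and, writing $\bar\eta_r(j)=\eta_r(j)-\rho_r(j/N)$,
\begin{equation*}
R^N_r(G) \;=\; \frac{E}{\sqrt N}\sum_{k=1}^{N-2}(\partial_x G_r)(k/N)\, \bar\eta_r(k)\,\bar\eta_r(k+1).
\end{equation*}

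The core step is to show $\int_0^t R^N_r(G)\, dr \to 0$ in $L^2(\bb P_{\mu_N})$. Classically this would call for a nonequilibrium Boltzmann--Gibbs replacement, unavailable here because of the boundary drive. Instead, one uses the Cole--Hopf variables $\xi^N_t(j) = \prod_{i\le j}\bigl(1+(a_N-1)\eta_t(i)\bigr)$ with $a_N=1+E/N$, under which $\mc L_N$ becomes a \emph{linear} semi-discrete evolution with Dirichlet-type data determined by $\alpha,\beta$. The sharp moment estimates for $\xi^N$ and for the fundamental solution of this linear equation, established in the last two sections of the paper, translate into uniform-in-$N$ two-point correlation bounds on $\bar\eta_r$ which, after a Cauchy--Schwarz argument on the double sum $E[(\int_0^t R^N_r\, dr)^2]$, are strong enough to yield the desired vanishing.

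Tightness of $\{Q_N\}$ on $D([0,T],\mc H_{-k})$ then follows from Mitoma's criterion: it reduces to tightness of the real-valued processes $Y^N_\cdot(e_n)$ for each eigenfunction together with a polynomial-in-$n$ bound on $\sup_{s\le T,\,N} E[Y^N_s(e_n)^2]$. The former comes from Aldous's criterion applied to the decomposition above (the bracket of $M^N(G)$ is easily bounded by $C_t\|G\|^2_{C^1}$), the latter from hypothesis \eqref{08} combined with the same Cole--Hopf moment bounds propagating the estimate in time; the threshold $k>7/2$ matches the Sobolev embedding required to promote Mitoma's pointwise bounds to continuity in $\mc H_{-k}$. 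Identification of the limit then follows the scheme of \cite[Ch.~11]{kl}: the bracket $\<M^N(G)\>_t$ converges in probability to $2\int_0^t\!\int_0^1 \sigma(\rho(r,x))(\partial_x T_{t,r}G(x))^2\, dx\, dr$ once $c_{k,k+1}(\eta_r)$ is averaged to $\sigma(\rho_r)$, an approximation whose error is again controlled by a two-point estimate of the kind produced in the previous paragraph. A standard exponential-martingale argument gives Gaussianity of each $W(t,G)$; polarization of the analogous computation for $\<M^N(G),M^N(H)\>_t$ yields the bilinear covariance \eqref{f2.6}, and the orthogonality $E_{Q^*}[W(t,G)Y_0(H)]=0$ is inherited from the exact identity $E[M^N_t(G)Y^N_0(H)]=0$, valid at each finite $N$ because $M^N$ starts at zero.

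The main obstacle is the control of $\int_0^t R^N_r(G)\, dr$ in the second paragraph. Because the weak asymmetry $E/N$ multiplies a sum of $N$ quadratic fluctuations, the integrand $R^N_r$ is genuinely $O(1)$ in $L^2$ at each fixed $r$, and the cancellation that makes its time integral small is invisible pointwise; without equilibrium or a Boltzmann--Gibbs principle, it must be extracted from uniform moment bounds for the fundamental solution of the Dirichlet-type semi-discrete equation satisfied by $\xi^N$. Producing these bounds, in a manner reminiscent of hypercontractivity estimates for ergodic Markov chains, is the technical heart of the paper and the step on which the entire scheme hinges.
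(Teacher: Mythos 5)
Your overall architecture (martingale decomposition along a backward equation, tightness via the eigenfunction expansion, polarization for the covariances, orthogonality of $W(t,G)$ and $Y_0(H)$ from the vanishing of the martingale at time zero) matches the paper in outline, but your central step is both different from the paper's and not actually carried out. You apply Dynkin's formula to the original field $Y^N_s(T_{t,s}G)$ and are left with the quadratic remainder $R^N_r(G)=\frac{E}{\sqrt N}\sum_k(\partial_xG_r)(k/N)\,\bar\eta_r(k)\bar\eta_r(k+1)$, whose time integral you claim vanishes in $L^2$ thanks to ``two-point correlation bounds on $\bar\eta_r$'' translated from the Cole--Hopf moment estimates. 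This is precisely the nonequilibrium Boltzmann--Gibbs principle that the paper's construction is designed to avoid, and the asserted mechanism does not close: expanding $E[(\int_0^tR^N_r\,dr)^2]$ produces on the order of $N^2$ four-point space-time correlations with prefactor $E^2/N$, so a uniform bound of order $N^{-1}$ on each correlation only gives $O(1)$, not $o(1)$; one needs genuine decay in the spatial or temporal separation. The paper's estimates (Lemmas \ref{s09} and \ref{sd4}) control sup-norm moments of $\xi_t-\lambda_t$, i.e.\ of an exponential of a \emph{partial sum} of the $\bar\eta$'s, not pointwise correlations of $\bar\eta$ itself; recovering the latter from the former requires two discrete differentiations of $\ln\xi_t$, which costs factors of $N$ that the available bounds do not absorb.

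The paper takes a route that removes $R^N_r$ altogether: it replaces $Y^N_t$ by the current field $J^N_t$ built directly from $\xi_t(j)-\lambda_t(j)$ (Propositions \ref{s13} and \ref{s10}), and chooses the semi-discrete backward equation \eqref{f2.1} so that, by the algebraic identity \eqref{f2.7}, the drift of $J^N_s(g_s)$ cancels \emph{exactly} and $J^N_s(g_s)-J^N_0(g_0)$ is the martingale of \eqref{f2.5}. The only nonlinearity left is the remainder $R^N_t(G)$ of \eqref{r1}, which is quadratic in $(\xi_t-\lambda_t)/\lambda_t$ and is killed by the $L^2$-in-space estimate of Lemma \ref{s11}, itself resting on the fourth-moment bound of Lemma \ref{sd4}. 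To make your route work you would have to supply the missing space-time correlation decay for $\bar\eta$, which is an open problem rather than a corollary of the paper's lemmas; as written, the key step of your proof restates the difficulty instead of resolving it.
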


\begin{corollary}
\label{s2.4}
In addition to the hypotheses of Theorem \ref{s2.1}, assume that
$Y^N_0$ converges to a zero-mean Gaussian field $Y$ with covariance
denoted by $\ll \cdot, \cdot \gg$, so that for all $G$, $H$ in
$C^2([0,1])$,
\begin{equation*}
\lim_{N\to\infty} E_{\mu^N} [Y^N_0(H) Y^N_0(G)] \;=\; \ll H, G \gg\; .
\end{equation*}
Then, the sequence $Q^N$ converges to a mean-zero Gaussian measure $Q$
whose covariances are given by
\begin{equation*}
\begin{split}
E_Q[Y_t(G) Y_s(H)] \; & =\; \ll T_{t,0} G , T_{s,0} H\gg \\
& +\; 2 \int_{0}^{t\wedge s} \int_0^1
\sigma(\rho(r,x))\, (\partial_ x T_{t,r} G)(x)\, 
(\partial_ x T_{s,r} H)(x) \, dx \, dr \;.    
\end{split}
\end{equation*}
for all $0\le s$, $t\le T$, $H$, $G$ in $C^5_0([0,1])$. 
\end{corollary}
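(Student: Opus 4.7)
The plan is to show that under the additional hypothesis every subsequential limit point $Q^*$ of $Q^N$ has the same law, so that by the tightness already established in Theorem \ref{s2.1} together with Prohorov's theorem, the full sequence converges to this common limit $Q$.

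First I would fix a subsequential limit $Q^*$ and invoke Theorem \ref{s2.1} to decompose, under $Q^*$,
\[
Y_t(G) \;=\; Y_0(T_{t,0}G) \;+\; W(t,G)\;,
\]
where $W(t,G)$ is mean-zero Gaussian with covariances given by the integral term in \eqref{f2.6} and satisfies $E_{Q^*}[W(t,G)\,Y_0(H)] = 0$ for all $H \in C^5_0([0,1])$. The additional hypothesis on the initial field forces $Y_0$ under $Q^*$ to have the law of the Gaussian field $Y$ with covariance $\ll \cdot , \cdot \gg$.

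The next step is to upgrade the marginal Gaussianity of $Y_0$ and of the increments $W(t,G)$ to joint Gaussianity of the whole family $\{Y_0(H_i),\, W(t_j,G_j)\}$ under $Q^*$. Orthogonality then becomes independence of $Y_0$ from the process $W$, and expanding the product $Y_t(G)\,Y_s(H)$ by bilinearity yields
\[
E_{Q^*}[Y_t(G)\, Y_s(H)] \;=\; \ll T_{t,0}G,\, T_{s,0}H \gg \;+\; E_{Q^*}[W(t,G)\, W(s,H)]\;,
\]
which is the announced formula. Analogous computations for arbitrary linear combinations of $Y_{t_i}(G_i)$ show that all finite-dimensional marginals under $Q^*$ are mean-zero Gaussian with covariances determined by $\ll\cdot,\cdot\gg$ and \eqref{f2.6}. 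Hence $Q^*$ is uniquely determined on cylinders generated by $C^5_0([0,1])$ test functions and, by a density argument exploiting the continuity of $H \mapsto Y_t(H)$ on $\mc H_{-k}$, on all of $\mc H_{-k}$.

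The main technical point is the joint Gaussianity of $Y_0$ together with the increment process $W$, which is not a formal consequence of the marginal statements in Theorem \ref{s2.1}. I would extract it from the martingale/Cole--Hopf characterization underlying the proof of that theorem: examining the joint convergence of characteristic functions of arbitrary linear combinations $a\, Y^N_0(H) + \sum_i b_i\,[Y^N_{t_i}(G_i) - Y^N_0(T_{t_i,0}G_i)]$ should identify the limiting joint characteristic function as the exponential of minus one half of the quadratic form built from $\ll\cdot,\cdot\gg$ and \eqref{f2.6}, thereby establishing joint normality. Once this is secured, the independence argument and the bilinearity computation above close the proof.
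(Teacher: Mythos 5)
Your proposal follows the same route as the paper, which derives the corollary directly from the decomposition $Y_t(G) = Y_0(T_{t,0}G) + W(t,G)$ into uncorrelated mean-zero Gaussian variables, combined with the covariance formula \eqref{f2.6} and the hypothesis on the initial field; the paper simply declares this an immediate consequence of Theorem \ref{s2.1}. Your extra step of establishing joint Gaussianity of $Y_0$ and the increments via characteristic functions of linear combinations of the underlying martingales only makes explicit a point the paper leaves implicit, so the approaches are essentially identical.
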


This result is an immediate consequence of Theorem \ref{s2.1}. Under
any limit point $Q^*$ of the sequence $Q^N$, for any function $G$ in
$C^5_0([0,1])$, $Y_t (G)$ can be written as the sum of two
uncorrelated mean-zero Gaussian variables $W(t,G)$ and
$Y_0(T_{t,0}G)$.

Since under the measure $Q$, $W(t,G)$ is a Brownian motion changed in
time, the process $Y_t$ may be understood as a generalized
Ornstein-Uhlenbeck process described by the formal stochastic partial
differential equation
\begin{equation*}
dY_t \;=\; \ms L_t Y_t dt \;+\; \sqrt{2\sigma(\rho_t)} \nabla dW_t\;,
\end{equation*}
where $\ms L_t$ is the linear differential operator $\partial^2_x +
(1-2\rho_t) E \partial_x$.

The article is organized as follows. In Section \ref{sec3} we
introduce the microscopic Cole-Hopf transformation and we write the
density fluctuation field as the sum of a current field and a
remainder. In Section \ref{sec2.1} we prove Theorem \ref{s2.1} and
Corollary \ref{s2.4}, assuming that the density field $Y^N_t$ is tight
and that three estimates are in force. In Sections
\ref{sec5}--\ref{sec} we prove these three estimates, and in Section
\ref{sec2} we prove tightness of $Y^N_t$. All proofs rely on estimates
on the moments of the microscopic Cole-Hopf variables, presented in
Section \ref{sec4}, and on estimates of the solutions of certain
semi-discrete equations, presented in Section \ref{sec11}.

\section{A microscopic Cole-Hopf transformation}
\label{sec3}

To keep notation simple, from now on we drop the superscript $N$ on
the process $\eta_t^N$.  Following \cite{dg, g} we define in this
section a microscopic Cole-Hopf transformation of the process
$\eta_t$. For $N\ge 1$, let
\begin{equation*}
\Lambda^-_N \;=\; \{1, \dots, N-1\}\;, \quad
\Lambda_N \;=\; \{0, \dots, N-1\}\;, \quad
\Lambda^+_N \;=\; \{0, \dots, N\}\;.
\end{equation*}

Denote by $\Omega = \Omega^N$ the linear operator defined on
functions $f: \Lambda_N \to \bb R$ by
\begin{equation}
\label{fg11}
\left\{
\begin{array}{l}
\vphantom{\Big\{}
(\Omega f)(0) \;=\; - \alpha E N f(0) \;+\; N (\nabla_N^+ f) (0)\;, \\
\vphantom{\Big\{}
(\Omega f)(j) \;=\; (\Delta_N f)(j) \;-\; E (\nabla_N^- f)(j) \;,
\quad 1\le j\le N-2\;, \\
\vphantom{\Big\{}
(\Omega f)(N-1) \;=\; \beta E N f(N-1) \;-\; N \Big(1 + \frac{E}{N}\Big)
(\nabla_N^- f) (N-1)\;.
\end{array}
\right.
\end{equation}
In this formula,
\begin{equation*}
(\nabla_N^+ f)(j) = N[f(j+1) - f(j)]\;, \quad
(\nabla_N^-f)(j) = -N[f(j-1) - f(j)]\;,
\end{equation*}
and
\begin{equation*}
(\Delta_N f)(j) = N^2[f(j+1) + f(j-1) - 2f(j)].
\end{equation*}

Let $\lambda_t = \lambda^N_t$ be the solution of the linear equation
\begin{equation}
\label{fg9}
\left\{
\begin{array}{l}
\vphantom{\Big\{}
(\partial_t \lambda_t)(j)=(\Omega \lambda_t) (j)\;,
\quad 0 \leq j \leq N-1 ,\\
\vphantom{\Big\{}
\lambda_0(j) = \exp\big\{-(\gamma/N) \sum_{k=1}^j
\rho_0(k/N) \big\}\;,
\end{array}
\right.
\end{equation}
where $\gamma=\gamma_N \le 0$ is chosen so that $e^{-\gamma/N} = 1 +
E/N$, and $\rho_0:[0,1]\to[0,1]$ is a density profile satisfying the
assumptions of Theorem \ref{s2.1}. For $j\in\Lambda^-_N$, let
\begin{equation}
\label{r}
r_t(j) \;=\; -\frac{1}{\gamma}\, [\nabla^-_N
\ln (\lambda_t)] (j) \;.
\end{equation}

Denote by $\tilde Y^N_t$, $t\ge 0$, the modified density fluctuation
field defined on functions $G$ in $C^1([0,1])$ by
\begin{equation*}
\tilde{Y}^N_t (G) \;=\; \frac{1}{\sqrt{N}} \sum_{j=1}^{N-1}
G(j/N) \, \big\{ \eta_t(j)-r_t(j)\big \} \; .
\end{equation*}
Next result asserts that the original density fluctuation field
$Y^N_t$ is close to the modified density field $\tilde{Y}^N_t$.

\begin{proposition}
\label{s13}
For each $T>0$,
\begin{equation*}
\sup_{N\ge 1} \, \sup_{0\le t\le T} \, \max_{1\le j\le N-1}
N\, \big\vert r_t(j) - \rho (t, j/N) \big\vert \;<\; \infty \;.
\end{equation*}
\end{proposition}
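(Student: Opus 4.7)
The goal is a purely deterministic comparison between the semi-discrete field $r_t(j)$ and the continuous solution $\rho(t,j/N)$, so I would set up an equation for the difference and use a discrete semigroup estimate from the section on semi-discrete linear equations advertised at the end of the introduction.

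First I would establish basic positivity and boundedness. Since $\lambda_0 > 0$ and $\Omega$ generates a positivity-preserving semigroup (or, equivalently, $\lambda_t(j) = E_{\mu_N}[\prod_{k=1}^j (1+E/N)^{-\eta_t(k)}]$ via Gärtner's representation), one has $\lambda_t(j) > 0$, and moreover $\lambda_t(j-1)/\lambda_t(j) \in [1, 1+E/N]$ from the probabilistic interpretation. Combined with $\gamma_N = -N\ln(1+E/N)$, this yields $r_t(j) \in [0,1]$ uniformly. Consequently $\gamma_N r_t(j)/N = O(1/N)$ everywhere, so Taylor expansions of $\exp(\pm \gamma_N r_t/N) = \lambda_t(j\pm 1)/\lambda_t(j)$ are legitimate with uniformly bounded remainders.

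Next I would derive a semi-discrete evolution for $r_t$ by differentiating $r_t(j)=-\gamma_N^{-1}(\nabla_N^- \ln \lambda_t)(j)$. Using $\partial_t\lambda_t=\Omega\lambda_t$ one obtains
\begin{equation*}
\partial_t r_t(j)\;=\;-\frac{N}{\gamma_N}\Big[\frac{(\Omega \lambda_t)(j)}{\lambda_t(j)}-\frac{(\Omega \lambda_t)(j-1)}{\lambda_t(j-1)}\Big],
\end{equation*}
and substituting the ratios by their exponential expressions and expanding to order three in $1/N$ converts the right-hand side into a semi-discrete viscous Burgers operator applied to $r_t$:
\begin{equation*}
\partial_t r_t(j)\;=\;(\Delta_N r_t)(j)\;-\;E\,\nabla_N^-\!\big[r_t(1-r_t)\big](j)\;+\;R^N_t(j),
\end{equation*}
with $\|R^N_t\|_\infty \le C/N$ uniformly for $2\le j\le N-2$. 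The boundary operators in \eqref{fg11} at $j=0,N-1$ are exactly tailored so that this identity extends (with an $O(1)$ remainder) up to $j=1$ and $j=N-1$, and so that the effective boundary values $r_t(0)$, $r_t(N)$ differ from $\alpha, \beta$ by $O(1/N)$. On the other hand, the classical parabolic regularity of \eqref{vBe} — for which the hypothesis $\rho_0\in C^4([0,1])$ with matching boundary data $\rho_0(0)=\alpha$, $\rho_0(1)=\beta$ is precisely what guarantees enough smoothness up to $\{0,1\}$ — gives the same identity for $\hat\rho_t(j):=\rho(t,j/N)$, with a remainder also bounded by $C/N$ (and exact Dirichlet values at the endpoints).

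Setting $D_t(j) := N\big(r_t(j) - \hat\rho_t(j)\big)$ and subtracting, the nonlinear difference $r_t(1-r_t)-\hat\rho_t(1-\hat\rho_t)$ factors as $(r_t-\hat\rho_t)(1-r_t-\hat\rho_t)$ with the second factor uniformly bounded in $[-1,1]$, so $D_t$ satisfies a semi-discrete \emph{linear} equation
\begin{equation*}
\partial_t D_t \;=\; \Delta_N D_t \;-\; E\,\nabla_N^- \!\big[(1-r_t-\hat\rho_t)D_t\big] \;+\; F^N_t,
\end{equation*}
with $\|F^N_t\|_\infty$ and the boundary data $D_t(0), D_t(N)$ bounded uniformly in $N$ and $t\in[0,T]$, and with uniformly bounded multiplicative coefficient $(1-r_t-\hat\rho_t)$. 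Applying Duhamel's formula together with the $L^\infty$ bounds for the fundamental solution of such semi-discrete parabolic equations (exactly the estimates in Section \ref{sec11}) — or, more elementarily, a discrete maximum principle — closes the argument and yields the claimed uniform bound on $\|D_t\|_\infty$.

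The principal obstacle I expect is the careful bookkeeping of the boundary terms: the Robin-type operator $\Omega$ at $j=0$ and $j=N-1$ must translate, after Cole-Hopf, into the statement that $r_t(1)-\alpha$ and $r_t(N-1)-\beta$ are $O(1/N)$. This requires pushing the Taylor expansion one order further at the two endpoints, and simultaneously invoking the smoothness of $\rho$ up to the boundary — the reason why the regularity assumption $\rho_0\in C^4$ (rather than merely continuous) enters the statement of Theorem \ref{s2.1}.
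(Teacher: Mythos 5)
Your overall architecture (write a semi-discrete evolution, subtract the continuous one, close with Duhamel or a maximum principle using the semigroup bounds of Section \ref{sec11}) is the right general shape, but you apply it at the wrong level: you undo the Cole--Hopf linearization by deriving a \emph{nonlinear} semi-discrete Burgers equation for $r_t$ and comparing it with \eqref{vBe}, and two of your key claims do not hold as stated. First, the assertion that expanding $-\gamma^{-1}\nabla_N^-\big[(\Omega\lambda_t)/\lambda_t\big]$ to third order yields $\Delta_N r_t - E\,\nabla_N^-[r_t(1-r_t)]$ with a remainder of sup norm $C/N$ is unjustified: the outer $\nabla_N^-$ multiplies every Taylor remainder by $N$, so the discarded terms are of the form $N^{-1}\Delta_N(r_t^2)$, $N^{-1}\nabla_N^-(r_t^3)$, etc., and these are $O(1/N)$ only if one already has uniform-in-$N$ bounds on the first and second \emph{discrete} derivatives of $r_t$. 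No such a priori bounds are available at this point --- the paper's gradient bound on $\tilde r_t$ (Lemma \ref{s2.10}) is itself deduced from the comparison with $\rho$ that you are trying to prove, so this step is circular. Second, even granting that remainder estimate, subtracting the two nonlinear equations does not produce a linear equation for $D_t=N(r_t-\hat\rho_t)$ with bounded data: since $r_t-\hat\rho_t=D_t/N$ enters the exact equation through expressions like $N^2[e^{-\gamma r_t(j+1)/N}-e^{-\gamma\hat\rho_t(j+1)/N}]$, the second-order term of the linearization, after the outer $N\nabla_N^-$, is of size $\Vert D_t\Vert_M^2$ with an $N$-independent constant. Your Gronwall/maximum-principle step then gives a Riccati-type inequality $y'\le C(1+y+y^2)$, whose solution blows up at a finite time independent of $N$; the argument does not close for arbitrary $T$.

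The paper avoids both problems by never leaving the linear level. Lemma \ref{s07} compares $\lambda_t$, the solution of the linear equation \eqref{fg9}, with $K_t$, the Cole--Hopf transform of $\rho$, which solves the linear equation \eqref{eq_linearized}; the difference satisfies an exactly linear equation whose source is $O(1/N)$ in the interior and $O(1)$ at the two boundary sites, the latter absorbed by the $(N^2 t)^{-1/2}$ heat-kernel bound of Corollary \ref{s03}. The same comparison is carried out for $\nabla_N^-\lambda_t$ versus $\partial_xK_t$. One then returns to $r_t$ versus $\rho$ purely algebraically, via $\tilde r_t$ of \eqref{f2.8}, the lower bound of Lemma \ref{s08}, and the elementary identity relating $r_t$ to $\tilde r_t$; no nonlinear error terms ever appear. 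A last, smaller point: the ratio bounds $1\le\lambda_t(j+1)/\lambda_t(j)\le 1+E/N$ cannot be read off a probabilistic representation of $\lambda_t$ (the initial datum of \eqref{fg9} is deterministic and does not coincide with $E_{\mu_N}[\xi_0(j)]$ for a general $\mu_N$); they come from the deterministic monotonicity statement, Lemma \ref{s02}.
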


For $0\le j,k\le N$ with $|j-k|=1$, denote by $J^{j,k}_t$, the total
number of jumps from $j$ to $k$ in the time interval $[0,t]$, and let
$W_t^{j,j+1}$ be the total current over the bond $\{j,j+1\}$, that is
\begin{equation*}
W_t^{j,j+1} \;=\; J_t^{j,j+1} \;-\; J_t^{j+1,j}\;.
\end{equation*}
In this formula, $J^{0,1}_t$ (resp. $J^{1,0}_t$) stands for the total
number of particles created (resp. removed) at the left boundary, with
a similar convention at the right boundary.

For $j\in\Lambda_N$, let
\begin{equation}
\label{fg1}
\xi_t(j) \;=\; \exp \big\{ (\gamma /N) \big[ W_t^{j,j+1} - \sum_{k=1}^j
\eta_0(k) \big] \big\}\;.
\end{equation}
Since
\begin{equation*}
\xi_t(j) \,-\, \xi_0(j) \,=\, \int_0^t \xi_{s-}(j) \, [e^{\gamma/N}-1]
\, dJ_s^{j,j+1} \,+\, \int_0^t \xi_{s-}(j)\, [e^{-\gamma/N}-1]\,
dJ_s^{j+1,j}\;,
\end{equation*}
$\xi_t(j)$ can be written as
\begin{equation}
\label{f2.11}
\xi_t(j) \;=\; \xi_0(j) \;+\; \int_0^t \xi_s(j) \, \mf g_{j,j+1}(\eta_s) \, ds
\;+\; \mathcal{M}^N_t(j)\;,
\end{equation}
where, in view of the definition of $\gamma$ and of the convention
\eqref{01}, 
\begin{equation*}
\mf g_{j,j+1}(\eta) \;=\; E N [\eta(j+1)-\eta(j)]\;,
\end{equation*}
and $\mathcal{M}^N_t(j)$ is a martingale with quadratic variation
given by
\begin{equation}
\label{VQuad}
\<\mathcal{M}^N (j),\mathcal{M}^N(k)\>_t \;=\;
\delta_{j,k} \, E^2 \int_0^t \xi_s(j)^2 \, \mf h_j(\eta_s) \, ds \;.
\end{equation}
In this formula, $\delta_{j,k}$ is the delta of Kroenecker and
\begin{equation}
\label{local rates g}
\mf h_j(\eta) \;:=\; e^{\gamma/N} \eta(j) \,[1-\eta(j+1)]
\;+\; \eta(j+1) \,[1-\eta(j)]\;.
\end{equation}

By the continuity equation, for $1\le j\le N-1$,
\begin{equation*}
W^{j-1,j}_t \;-\; W^{j,j+1}_t \;=\; \eta_t(j) \;-\; \eta_0(j)\;.
\end{equation*}
As a consequence, for $ 0\le j\le N-2$, $1\le k\le N-1$,
\begin{equation}
\label{fh1}
\begin{split}
&\xi_t(j+1) - \xi_t(j) \;=\; \xi_t(j)
\eta_t(j+1) \, [ \exp\{- \gamma/N\} -1] \;, \\
&\quad \xi_t(k-1) - \xi_t(k) \;=\;
\xi_t(k) \, \eta_t(k) \, [ \exp\{\gamma/N\} -1] \;.
\end{split}
\end{equation}
These equations explain the term $\sum_{1\le k\le j} \eta_0(k)$ in the
definition of $\xi_t(j)$. In view of the previous identities, by
definition of $\mf g_{j,j+1}$, and by the choice of $\gamma$,
\begin{equation}
\label{fg2}
\xi_t(j) \;=\; \xi_0(j) \;+\; \int_0^t (\Omega \xi_s)(j) \, ds
\;+\; \mathcal{M}^N_t(j)\;.
\end{equation}

The advantage of the process $\xi_t$ compared to the original process
$\eta_t$ is that it evolves according to the linear equation
\eqref{fg2}. Of course, the original process $\eta_t$ can be recovered
from $\xi_t$, since from \eqref{fg1} and by the continuity equation
appearing right below \eqref{local rates g}, for $1 \leq j \leq N-1$,
\begin{equation*}
\eta_t(j) \;=\; -\frac{1}{\gamma}\, [\nabla^-_N \ln (\xi_t)] (j)\;.
\end{equation*}

Denote by ${J}^N_t$, $t\ge 0$, the current fluctuation field
defined on functions $G\in C^1([0,1])$ by
\begin{equation*}
J^N_t (G) \;=\; \frac{1}{\sqrt{N}} \sum^{N-1}_{j=0}
\frac{(\nabla^+_NG)(j/N)}{\gamma \, \lambda_t(j)} 
\big( \xi_t(j) - \lambda_t(j) \big)  \; .
\end{equation*}
By the formula for $\eta_t(j)$ in terms of $\xi_t(j)$, and by
\eqref{r}, a summation by parts yields that for functions $G\in
C^1_0([0,1])$
\begin{equation}
\label{r1}
\tilde Y^N_t (G) \;=\; J^N_t (G) \;+\; R_t^N(G) \; ,
\end{equation}
where the remainder ${R}_t^N(G)$ is given by
\begin{equation*}
R_t^N(G) \;=\; \frac{1} {\sqrt{N}} \sum_{j=0}^{N-1}
\frac{1}{\gamma} (\nabla_N^+ G) (j/N)
\Big[ \ln \Big(\frac{\xi_t(j)}{\lambda_t(j)}\Big)
+1-\frac{\xi_t(j)}{\lambda_t(j)}\Big] \;.
\end{equation*}
Notice that both the current field ${J}^N_t $ and the remainder
${R}_t^N$ depend only on the process $\xi_t$.  Sometimes, by abuse of
notation, we consider that $R_t^N$ acts on discrete functions $g:\{0,
\dots, N\}\to \bb R$ instead of continuous functions $G:[0,1]\to\bb
R$. This is the case in the next proposition.

The second result of this section asserts that the modified density
fluctuation field $\tilde{Y}^N_t$ is close to the current
fluctuation field $J^N_t$.

\begin{proposition} 
\label{s10}
Fix $T>0$ and a function $\phi : [0,T]\times \Lambda^+_N \to \bb R$
such that
\begin{equation*}
\sup_{0\le t\le T} \, \max_{j\in\Lambda_N} \,
\frac{|(\nabla^+_N \phi_t)(j)|}{\lambda_{t}(j)} \;<\; \infty\;.
\end{equation*}
Then, for any $\delta >0$,
\begin{equation*}
\lim_{N\rightarrow{+\infty}} \mathbb{P}_{\mu_N} \Big[
\sup_{0\leq t\leq T} | R_t^N(\phi_t) | > \delta \Big] \;=\; 0\;.
\end{equation*}
\end{proposition}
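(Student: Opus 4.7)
The plan is to exploit the quadratic vanishing of $f(x) := \ln x + 1 - x$ at $x = 1$. Since $f(1) = f'(1) = 0$ and $f''(1) = -1$, there is a constant $C_0$ such that $|f(x)| \le C_0 (x-1)^2$ for $x \in [1/2,2]$. Setting $Z_t(j) = \xi_t(j)/\lambda_t(j)$ and using the hypothesis $|(\nabla^+_N \phi_t)(j/N)|/\lambda_t(j) \le C_1$, we bound
\[
|R_t^N(\phi_t)| \;\le\; \frac{C_1}{|\gamma|\sqrt{N}} \sum_{j=0}^{N-1} \lambda_t(j)\,\big| f(Z_t(j)) \big|\;.
\]

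Introduce the good event $A_N := \{ Z_t(j) \in [1/2,2] \text{ for all } 0 \le t \le T,\ 0 \le j \le N-1\}$. On $A_N$ the quadratic bound on $f$ applies, and combined with the fact that $\lambda_t(j)$ stays in a compact subset of $(0,\infty)$ uniformly in $t,j,N$ (which follows from $\rho_0 \in [0,1]$, $|\gamma|/N = O(1/N)$, and a maximum principle for $\partial_t \lambda_t = \Omega \lambda_t$), we get on $A_N$
\[
|R_t^N(\phi_t)| \;\le\; \frac{C_2}{\sqrt{N}} \sum_{j=0}^{N-1} \big(\xi_t(j) - \lambda_t(j)\big)^2\;.
\]
Via Markov's inequality, the proposition then reduces to (i) $\mathbb{P}_{\mu_N}(A_N^c) \to 0$, and (ii) $E_{\mu_N}\big[\sup_{0 \le t \le T} \sum_{j} (\xi_t(j) - \lambda_t(j))^2\big]$ is uniformly bounded in $N$.

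Both items follow from moment estimates on $D_t(j) := \xi_t(j) - \lambda_t(j)$. Since $\lambda_t$ solves $\partial_t \lambda_t = \Omega \lambda_t$ and $\xi_t$ satisfies \eqref{fg2}, $D_t$ obeys $d D_t(j) = (\Omega D_t)(j)\, dt + d\mathcal{M}^N_t(j)$, and Duhamel's formula yields
\[
D_t(j) \;=\; \sum_{k\in\Lambda_N} p^N_{t,0}(j,k)\, D_0(k) \;+\; \sum_{k\in\Lambda_N} \int_0^t p^N_{t,s}(j,k)\, d\mathcal{M}^N_s(k)\;,
\]
where $p^N_{t,s}$ is the fundamental solution of the semi-discrete equation generated by $\Omega$. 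By It\^o isometry, the quadratic variation \eqref{VQuad}, $L^p$ bounds on $\xi_s(j)$, and the sharp pointwise bounds on $p^N_{t,s}$ (the tools of Sections \ref{sec4} and \ref{sec11}), one gets $E[D_t(j)^{2p}] = O(N^{-p})$. Summing over $j$ gives (ii), after upgrading the pointwise-in-$t$ estimate to a supremum via Doob's inequality applied to the stochastic integral; high-moment bounds combined with a union bound over $j$ and a fine mesh in $t$ (coupled with a small increment estimate for $D_t$) give (i).

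The main obstacle is the uniform-in-time, uniform-in-$j$ control required for $A_N$, together with the uniform-in-time bound on $\sum_j D_t(j)^2$. Both of these depend crucially on the sharp pointwise decay estimates for $p^N_{t,s}(j,k)$ (derived in Section \ref{sec11} in a hypercontractivity-like fashion) and on the sharp moment bounds for the Cole-Hopf variables $\xi_t(j)$ (derived in Section \ref{sec4}). Once these inputs are granted, the rest of the argument is the elementary Taylor bound on $f$, Markov's inequality, and standard martingale machinery.
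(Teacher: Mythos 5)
Your overall architecture --- the Taylor bound $|\log z + 1 - z|\le C(\delta)\,(1-z)^2$ on a good event, the resulting reduction of $|R^N_t(\phi_t)|$ to $N^{-1/2}\sum_j(\xi_t(j)-\lambda_t(j))^2$, and $L^2$-type control of $D_t=\xi_t-\lambda_t$ via Duhamel and the quadratic variation \eqref{VQuad} --- is the same as the paper's. But both technical steps you defer to ``standard machinery'' have genuine gaps. First, your route to $\mathbb{P}_{\mu_N}(A_N^c)\to 0$ via high moments, a union bound over $j$ and a fine time mesh does not close: assumption \eqref{08} controls only the \emph{fourth} moment of the initial field, so the best available pointwise bound is $\mathbb{E}[D_t(j)^4]=O(N^{-2})$ (Lemma \ref{sd4}), not $\mathbb{E}[D_t(j)^{2p}]=O(N^{-p})$ for large $p$. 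A union bound over $N$ sites and $M$ mesh times then costs $O(M/N)$, while the drift of $D_t(j)$ is of order $N$ (the operator $\Omega$ carries $N^2$ acting on increments which, by \eqref{fh1}, are only $O(1/N)$), so controlling $D_t$ between mesh points forces $M\gg N$; the two requirements are incompatible. The paper sidesteps this entirely: by the rigidity relations \eqref{02} and Lemma \ref{s02}, $\xi_t(j)/\lambda_t(j)\ge e^{\gamma}\,\xi_t(0)/\lambda_t(0)$ for \emph{every} $j$, so the uniform-in-$j$ lower bound --- the only bound actually needed, since $|\log z+1-z|\le C(\delta)(1-z)^2$ holds for all $z\ge\delta$ and your upper cutoff at $2$ is superfluous --- reduces to the single site $j=0$, which is handled by an averaging argument in Corollary \ref{s2.9}.

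Second, ``Doob's inequality applied to the stochastic integral'' does not upgrade the pointwise-in-$t$ estimate on $\sum_j D_t(j)^2$ to a supremum: in the Duhamel representation the stochastic convolution $\sum_k\int_0^t q_{t-s}(k,j)\,d\mathcal{M}^N_s(k)$ is \emph{not} a martingale in $t$, because the integrand depends on the terminal time $t$. The paper's Lemma \ref{s11} instead writes $N^{-1/2}\sum_j D_t(j)^2$ itself as a semimartingale (equation \eqref{06}), whose martingale part is a genuine martingale amenable to Doob, and controls the supremum over time windows of length $\tau_N$ with $N^{-1}\ll\tau_N\ll N^{-2/3}$, using the sign of the cubic-type term and the moment bounds of Section \ref{sec4}. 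Until you supply substitutes for these two steps, the proof is not complete.
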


\section{Proof of Theorem \ref{s2.1}}
\label{sec2.1}

Fix a density profile $\rho_0$ satisfying the assumptions of the
theorem and denote by $\rho(t,x)$ the solution of the viscous Burgers
equation \eqref{vBe} with initial condition $\rho_0$.  Let
$\{\mu_N:N\ge 1\}$ be a sequence of probability measures on $\Sigma_N$
for which \eqref{08} holds.

Let $\phi: \Lambda_N \to \bb R$ be a strictly positive
function. Denote by $\ms A_\phi = \ms A^N_\phi$ the difference
operator which acts on functions $g: \Lambda^+_N \to \bb R$ by
\begin{equation*} 
\left\{
\begin{split}
& (\ms A_\phi g)(0) \;=\;  (\ms A_\phi g)(N) =0 \; ,\\
& (\ms A_\phi g) (j) \;=\; 
(\Delta_N g)(j) \;+\; E\, \frac{[1-\theta_\phi (j)]}{1+ (E/N)
  \, \theta_\phi (j)} \, (\nabla_N^+g) (j)
\;-\; E \, \theta_\phi (j) \, (\nabla_N^-g) (j)
\end{split}
\right.
\end{equation*}
for $1\leq{j}\leq{N-1}$,  where
\begin{equation*}
\theta_\phi(j) \;=\; \frac{(\nabla_N^-\phi)(j)}
{E\, \phi(j-1)}\;\cdot
\end{equation*}

Denote by $\lambda_s$ the solution of \eqref{fg9}.  For $s\ge 0$, let
$\ms A_{s} = \ms A_{\lambda_s}$, and let
\begin{equation}
\label{f2.8}
\tilde{r}_s(j) \;:=\; \theta_{\lambda_s}(j) \;=\; \frac{(\nabla_N^-\lambda_s)(j)}
{E\, \lambda_s(j-1)}\;, \quad 1\le j\le N-1\;.
\end{equation}
By Lemma \ref{s12} below, $|\tilde{r}_t(j) - \rho(t,j/N)| \le C_0 /N$
uniformly in $0\leq{t}\leq{T}$ and $1\le j\le N-1$. Moreover, as $(\ms
A_s g)(0) = (\ms A_s g)(N) = 0$, the solution of the semi-discrete
equation
\begin{equation}
\label{f2.1}
\left\{
\begin{array}{l}
- (\partial_s g)(s,j) \;=\; (\ms A_s g)(s,j) \;, \quad
0\leq{j}\leq{N}\;, \\
g(t,j) \;=\; G(j/N) \;, \quad 0\leq{j}\leq{N}\;,
\end{array}
\right.
\end{equation}
for some $t>0$ and some $G$ in $C^2_0([0,1])$, is such that
$g_s(0)=g_s(N) = 0$ for all $0\le s\le t$. Hence, the semi-discrete
equation \eqref{f2.1} has to be understood as a discrete approximation
of the differential equation \eqref{f2.2}.

Fix a function $G$ in $C^{2}_0([0,1])$ and $t>0$.  Let $g_s(j) =
g^{N,t}_s (j)$ be the solution of \eqref{f2.1}.  A long computation
yields that for $0\le s\le t$,
\begin{equation}
\label{f2.5}
M^N_s(t,G) \;:=\;
J^N_s (g_s) \;-\; J^N_0 (g_0) \;=\; \frac{1}{\sqrt{N}} \sum_{j\in\Lambda_N} \int_0^s
\frac{(\nabla^+_N g_r)(j)}{\gamma \, \lambda_r(j)} 
\, d\mc M^N_r(j)\;,
\end{equation}
where $\mc M^N_s(j)$ is the martingale introduced in \eqref{f2.11}. We
present some details of this computation below equation \eqref{f2.7}.

\begin{proposition}
\label{s2.7}
Fix a density profile $\rho_0:[0,1]\to[0,1]$ and a sequence
$\{\mu_N:N\ge 1\}$ of probability measures on $\Sigma_N$ satisfying
the assumptions of Theorem \ref{s2.1}. Then, for each function $G$ in
$C^{2}_0([0,1])$ and $t>0$, there exists a finite constant $C_0$,
depending only on $G$ and $t$, such that for all $N\ge 1$,
\begin{equation*}
\bb E_{\mu_N} \big[ \sup_{0\le s\le t} M^N_s(t,G)^4
\big] \;\le \; C_0\;,\quad
\bb E_{\mu_N} \big[ \<M^N(t,G)\>_t^2 \big] \;\le \; C_0\;.
\end{equation*}
If $G$ belongs to $C^5_0([0,1])$, then the sequence of martingales
$M^N_s(t,G)$, $0\le s\le t$, converges in $D([0,t], \bb R)$ to a
mean-zero, continuous martingale, denoted by $M_s(t,G)$.  For $G_1$,
$G_2$ in $C^5_0([0,1])$, $t_1, t_2>0$, and $0\le s_j\le t_j$, the
covariances of $M_{s_1}(t_1,G_1)$ and $M_{s_2}(t_2,G_2)$ are given by
\begin{equation*}
\bb E [M_{s_1}(t_1,G_1) \, M_{s_2}(t_2,G_2)] \;=\; 
2 \int_{0}^{s_1\wedge s_2} \int_0^1 \sigma(\rho(r,x))\, (\partial_ x
T_{t_1,r} G_1)(x)\, (\partial_ x T_{t_2,r} G_2)(x) \, dx \, dr\;.
\end{equation*}
\end{proposition}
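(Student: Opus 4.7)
My plan is to prove this in three stages: derive an explicit formula for the predictable quadratic variation $\<M^N(t,G)\>_s$, use it together with Burkholder-Davis-Gundy and the Cole-Hopf moment estimates of Section~\ref{sec4} to establish the moment bounds, and finally identify the limit via a martingale central limit theorem. For the first stage, combining the martingale representation \eqref{f2.5}, the orthogonality of the family $\{\mc M^N(j)\}_j$, and \eqref{VQuad} yields
\begin{equation*}
\<M^N(t,G)\>_s \;=\; \frac{E^2}{N\gamma^2}\sum_{j\in\Lambda_N}\int_0^s \big(\nabla_N^+ g_r\big)(j)^2\, \frac{\xi_r(j)^2}{\lambda_r(j)^2}\, \mf h_j(\eta_r)\, dr\;.
\end{equation*}
Here $E^2/\gamma^2\to 1$, $\mf h_j$ is uniformly bounded by \eqref{local rates g}, and $|(\nabla_N^+ g_r)(j)|$ is bounded uniformly in $N,j,r$ by the Section~\ref{sec11} estimates for the semi-discrete backward equation \eqref{f2.1}, since its PDE limit \eqref{f2.2} has a smooth bounded solution whenever $G\in C^2_0([0,1])$.

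For the moment bounds, Burkholder-Davis-Gundy gives $\bb E_{\mu_N}[\sup_{s\le t} M^N_s(t,G)^4] \le C\, \bb E_{\mu_N}[\<M^N(t,G)\>_t^2]$, and a Cauchy-Schwarz applied to the $N^{-1}\sum_j\int_0^t dr$ average inside the square reduces the right-hand side to a uniform estimate on $\bb E_{\mu_N}[(\xi_r(j)/\lambda_r(j))^4]$, which is exactly what Section~\ref{sec4} provides. The same reasoning controls $\bb E_{\mu_N}[\<M^N(t,G)\>_t^2]$ directly.

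For $G\in C^5_0([0,1])$, I would show convergence of $\<M^N(t,G)\>_s$ in probability to
\begin{equation*}
V(s) \;:=\; 2\int_0^s\int_0^1 \sigma(\rho(r,x))\,\big(\partial_x T_{t,r}G\big)(x)^2\, dx\, dr\;.
\end{equation*}
Three replacements are needed: $\nabla_N^+ g_r(j)\to\partial_x T_{t,r}G(j/N)$, from the Section~\ref{sec11} $C^1$-convergence of $g^{N,t}_r$ to $T_{t,r}G$; $\xi_r(j)^2/\lambda_r(j)^2\to 1$ via the $L^2$ Cole-Hopf bounds of Section~\ref{sec4}; and most delicately, $\mf h_j(\eta_r)\to 2\sigma(\rho(r,j/N))$. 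For this last step I would use \eqref{fh1} to rewrite $\xi_r(j)^2\mf h_j(\eta_r)$ as a linear combination of the two-point products $\xi_r(j-1)\xi_r(j)$ and $\xi_r(j)\xi_r(j+1)$, compare them to their deterministic $\lambda$-analogues via Proposition~\ref{s13} and the Cole-Hopf moment bounds, and then estimate the resulting space-time averaged error in $L^2$ by Cauchy-Schwarz. This is the main technical obstacle, and it is precisely here that Cole-Hopf moment estimates substitute for a nonequilibrium Boltzmann-Gibbs principle.

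With these ingredients, the fourth-moment bound gives tightness of $s\mapsto M^N_s(t,G)$ in $D([0,t],\bb R)$, and inspection of the formula $|\Delta M^N_s(t,G)| = O(N^{-3/2}\,\xi_s/\lambda_s)$ shows that all jumps vanish in probability, so any limit point is continuous. The martingale functional central limit theorem (e.g.\ Theorem~VIII.3.11 in Jacod-Shiryaev) then yields convergence to a mean-zero continuous Gaussian martingale with quadratic variation $V(s)$. The covariance formula for $M_{s_1}(t_1,G_1)$ and $M_{s_2}(t_2,G_2)$ follows by running the identical three-replacement scheme on the cross-bracket $\<M^N(t_1,G_1),M^N(t_2,G_2)\>_s$ and invoking polarization of the martingale CLT.
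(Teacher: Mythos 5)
Your overall architecture (explicit formula for $\<M^N(t,G)\>_s$, moment bounds via Burkholder--Davis--Gundy plus the $\xi$-moment estimates, convergence via the Jacod--Shiryaev martingale CLT, polarization for the covariances) coincides with the paper's. But there is a genuine gap at the step you yourself identify as the delicate one: the replacement of $\mf h_j(\eta_r)$ by $2\sigma(\rho(r,j/N))$. Your plan is to use \eqref{fh1} to express $\xi_r(j)^2\,\mf h_j(\eta_r)$ through products $\xi_r(j-1)\xi_r(j)$, $\xi_r(j)\xi_r(j+1)$ and then compare these to the deterministic products of $\lambda$'s via the Cole--Hopf moment bounds. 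This cannot work as stated, for a scaling reason: the quadratic term $\xi_r(j)^2\eta_r(j)\eta_r(j+1)$ equals $(\xi_r(j-1)-\xi_r(j))(\xi_r(j+1)-\xi_r(j))/[(e^{\gamma/N}-1)(e^{-\gamma/N}-1)]$, i.e.\ a product of discrete gradients of $\xi$, each of size $O(N^{-1})$, multiplied by a coefficient of order $N^2$. The available control on $\xi-\lambda$ (Lemma \ref{sd4}) gives fluctuations of order $N^{-1/2}$ in $L^4$, which is larger by a factor $N^{1/2}$ than the gradients you are trying to resolve; after multiplying by $N^2$ the resulting error is of order $N^{1/2}$ per site, and the spatial average $N^{-1}\sum_j$ buys nothing because the errors $\xi_r(j)-\lambda_r(j)$ are strongly correlated in $j$. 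In short, closeness of $\xi$ to $\lambda$ says nothing about the occupation variables $\eta_r(j)\eta_r(j+1)$, and no Cole--Hopf moment estimate substitutes for local ergodicity here. The paper handles exactly this point with Lemma \ref{replacement}, a one-block--type replacement lemma (a law of large numbers for space--time averages of cylinder functions, proved with the hydrodynamic-limit entropy/Dirichlet machinery of \cite{kl} and \cite[Lemma 3.1]{blm1}); what the Cole--Hopf transformation spares you is the \emph{fluctuation-level} Boltzmann--Gibbs principle, not this LLN-level replacement. Your proof needs this lemma (or an equivalent ergodicity input), preceded, as in the paper's Lemma \ref{s2.5}, by a spatial averaging of the smooth weights over blocks of size $\epsilon N$ so that the replacement lemma applies to block averages of $\mf h_j$.

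A secondary, repairable inaccuracy: for a jump martingale and the exponent $4>2$, Burkholder--Davis--Gundy bounds $\bb E[\sup_s M_s^4]$ by the square of the \emph{optional} quadratic variation $[M]_t$, not the predictable bracket $\<M\>_t$; passing from one to the other costs an extra term controlled by the fourth power of the largest jump. This is why the paper invokes \cite[Lemma 3]{dg} and then bounds $\sup_s|M^N_s-M^N_{s-}|\le C_0 N^{-3/2}\sum_j\xi_s(j)$. You have this jump estimate in hand (you use it for continuity of the limit), so you only need to insert it into the moment bound; but as written the inequality $\bb E[\sup_s M_s^4]\le C\,\bb E[\<M\>_t^2]$ is not justified.
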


Since $M_s(t,G)$ is a continuous martingale whose quadratic variation is
deterministic, $M_s(t,G)$ is a Brownian motion changed in time. In
particular, $M_t(t,G)$ is a mean-zero Gaussian random variable. 

\begin{proof}[Proof of Theorem \ref{s2.1}]
Let $Q^*$ be a limit point of the sequence $Q_N$. Fix a function
$G\in{C_0^5([0,1])}$ and $t>0$. Let $g_s(j) =
g^{N,t}_s (j)$ be the solution of \eqref{f2.1} with final condition
equal to $G$. By \eqref{r1}, Proposition \ref{s13} and \eqref{f2.5}, 
\begin{equation*}
Y^N_t (G) \;-\; Y^N_0 (g_0) \;=\; M^N_t(t,G) \;+\; R^N_t(G) \;-\;
R^N_0(g_0)\;+\; \frac{C_N}{\sqrt{N}}\; ,
\end{equation*}
where $C_N$ is a sequence of numbers uniformly bounded.  By
Proposition \ref{s2.7} and in view of the remark made just after that
result, $M^N_t(t,G)$ converges in distribution to a mean-zero Gaussian
random variable, denoted by $W(t,G)$, whose variance is given by the
right hand side of \eqref{f2.6}, with $H=G$, $s=t$.

Let $\psi(s,j) = (\nabla_N^+ g_{t-s}) (j)/\lambda_{t-s} (j)$, $j\in
\Lambda_N$, $0\le s\le t$. By Remark \ref{s2.15} and by Proposition
\ref{s10}, $R^N_t(G)$ and  $R^N_0(g_0)$ converges to $0$ in probability.
Recall that we denote by $T_{t,s} G$ the solution of equation
\eqref{f2.2}. By Lemma \ref{s2.16}, $Y^N_0 (g_0) - Y^N_0(T_{t,0}G)$ is
absolutely bounded by $C_0/\sqrt{N}$. In conclusion, $Y^N_t (G) -
Y^N_0 (T_{t,0}G)$ converges in distribution to $W(t,G)$.

The covariance between $Y_0 (H)$ and $W_t (t,G)$ vanishes because $W_s
(t,G)$, $0\le s\le t$ is a martingale which vanishes at $s= 0$.

To complete the proof, it remains to compute the covariance between
$W(t,G)$ and $W(s,H)$. Assume that $s\le t$. Since $W_r (t,G)$, $0\le
r\le t$, is a martingale,
\begin{equation*}
E_{Q^*}[W(t,G) \, W(s,H)] \;=\; E_{Q^*}[W_s(t,G) \, W(s,H)] \;. 
\end{equation*}
By the polarization identity, we may express the covariance of a pair
of random variables $(X,Y)$ in terms of the variances of the variables
$X+Y$ and $X-Y$.
\end{proof}

\section{Proof of Proposition \ref{s13}}
\label{sec5}

The main result of this section asserts that the solution $\lambda_t$
of the linear equation \eqref{fg9} (satisfied by the expectation of
the Cole-Hopf variables $\xi_t$), is close to the Cole-Hopf
transformation of the solution of the viscous Burgers equation
\eqref{vBe}. 

Fix a profile $\rho_0:[0,1]\to [0,1]$ in $C^4([0,1])$, and denote by
$\rho(t,x)$ the solution of the hydrodynamic equation \eqref{vBe}. Let
$K(t,x)$ be the Cole-Hopf transformation of $\rho(t,x)$:
\begin{equation*}
K (t,x)\;=\;\exp\Big\{ E \Big[ \int_0^t \big\{ \partial_x\rho (s,x)
- E \, b(\rho(s,x))\big\} \,ds + \int_0^x \rho_0(y) \, dy \Big]
\Big\}\;.
\end{equation*}
Since $\partial_t K = K E [\partial_x\rho -E b (\rho)]$ and $\partial_x
K = E K \rho$, $K$ satisfies the linear parabolic equation with
boundary conditions
\begin{equation}
\label{eq_linearized}
\left\{
\begin{array}{l}
\p_t K = \partial_x^2 K  -E \partial_x K, \\
(\partial_x K)(t,0)= E \alpha K(t,0) \;,\quad
(\partial_x K)(t,1)= E \beta K(t,1)\;, \quad 0< t \leq T\;, \\
K(0,x)= \exp\{ E \int_0^x \rho_0(y)\, dy\} \;, \quad 0 \leq x \leq 1\;.
\end{array}
\right.
\end{equation}
As $\rho_0$ belongs to $C^4([0,1])$, $K_0$ belongs to $C^5([0,1])$,
and, by Lemma \ref{s2.13}, $K$ belongs to $C^{2,4}(\bb R_+\times
[0,1])$.

Denote by $\Vert f\Vert_M$ the sup norm of a function
$f:\Lambda_N, \Lambda^\pm_N\to\bb R$:
\begin{equation*}
\Vert f\Vert_M \;=\; \max_{j} \big\vert f (j)
\big\vert\;,
\end{equation*}
where the maximum is carried over the domain of definition of $f$. By
abuse of notation, if $G$ belongs to $C([0,1])$, $\Vert G\Vert_M$
represents $\max_{0\le j \le N} \vert G (j/N) \vert$.

\begin{lemma}
\label{s07}
Let $\lambda_t$ and $K_t$ be the solutions of \eqref{fg9} and
\eqref{eq_linearized}, respectively. Then, for every $T>0$,
\begin{equation*}
\begin{split}
&\sup_{N\ge 1} \,\,\sup_{0 \leq t \leq T}\,\,\max_{0\le j\le N-1} N\big\vert
\lambda_t(j) - K_t(j/N) \big\vert \;<\; +\infty \;, \\
& \quad \sup_{N\ge 1} \,\,\sup_{0 \leq t \leq T}\,\,\max_{1\le j\le N-1}
N\big\vert (\nabla^-_N \lambda_t) (j) - (\p_x K_t)(j/N) \big\vert
\;<\; +\infty\;.
\end{split}
\end{equation*}
\end{lemma}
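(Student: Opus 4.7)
The plan is to view $\lambda_t(j)$ as a semi-discrete approximation to $K_t(j/N)$, estimate the local truncation error, and close the argument via Duhamel's formula using the fundamental-solution bounds of Section \ref{sec11}.

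First I would set $w_t(j) := \lambda_t(j) - K_t(j/N)$ and check that the initial error is $O(1/N)$: writing $-\gamma/N = E/N + O(1/N^2)$ and comparing the Riemann sum $(1/N)\sum_{k=1}^j \rho_0(k/N)$ with $\int_0^{j/N}\rho_0(y)\,dy$ (using $\rho_0 \in C^4$) yields $\ln \lambda_0(j) - \ln K_0(j/N) = O(1/N)$, hence $|w_0(j)| \le C/N$ uniformly. Next I would compute the local truncation error $R_t(j) := (\Omega K_t(\cdot/N))(j) - \partial_t K_t(j/N)$ by Taylor expansion, using the regularity $K \in C^{2,4}(\bb R_+ \times [0,1])$ provided by Lemma \ref{s2.13}. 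For $1 \le j \le N-2$, the centered second difference and one-sided first difference give $(\Omega K_t(\cdot/N))(j) = \partial_t K_t(j/N) + (E/(2N))\,\partial_x^2 K_t(j/N) + O(1/N^2)$, so $|R_t(j)| \le C/N$. At $j=0$, Taylor expansion combined with the Robin condition $\partial_x K_t(0) = E\alpha\, K_t(0)$ cancels the leading $O(N)$ term of $(\Omega K_t(\cdot/N))(0)$, but an $O(1)$ residual $-(1/2)\partial_x^2 K_t(0) + \alpha E^2 K_t(0)$ survives; the analogous bound $|R_t(N-1)| \le C$ holds at the right boundary.

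Since $\partial_t w_t = \Omega w_t + R_t$, Duhamel's formula gives $w_t = e^{t\Omega}w_0 + \int_0^t e^{(t-s)\Omega}R_s\,ds$. I would decompose $R_s = R_s^{\rm int} + R_s^{\rm bd}$, with $R_s^{\rm int}$ supported on $\{1,\ldots,N-2\}$ with $\|R_s^{\rm int}\|_M \le C/N$, and $R_s^{\rm bd}$ supported on $\{0, N-1\}$ with $\|R_s^{\rm bd}\|_M \le C$. The initial-data and interior contributions are controlled by $C/N$ using the $L^\infty$ stability of $e^{u\Omega}$ established in Section \ref{sec11}. The boundary contribution needs the finer integrated heat-kernel bound $\int_0^T ([e^{u\Omega}]_{j,0} + [e^{u\Omega}]_{j,N-1})\,du \le C/N$, which quantifies the diffusive spread of a single-site $O(1)$ source into an $O(1/N)$ density on the $N$-point lattice; this is where the sharp bounds of Section \ref{sec11} are indispensable. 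Together these give the first estimate.

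The gradient bound is more delicate, since $|w_t| \le C/N$ only implies $\nabla_N^- w_t = O(1)$. To gain one order I would introduce the first-order corrector $K^* := K + (1/N)\,K^{(1)}$, where $K^{(1)}$ solves the linear parabolic equation $\partial_t K^{(1)} - \partial_x^2 K^{(1)} + E\partial_x K^{(1)} = -(E/2)\,\partial_x^2 K$ on $(0,1)$, equipped with modified Robin conditions at $x=0,1$ designed so that the boundary residual $-(1/2)\partial_x^2 K + \alpha E^2 K$ identified above is cancelled, and with vanishing initial data. Lemma \ref{s2.13} guarantees enough regularity of $K^{(1)}$ for the expansion to proceed. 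The truncation error of $K^*$ is then $O(1/N^2)$ in the interior and $O(1/N)$ at the boundary, and the same Duhamel analysis, now with the refined source bounds, yields $|\lambda_t(j) - K^*_t(j/N)| \le C/N^2$ uniformly. Taking the discrete gradient and combining with $\nabla_N^- K^*_t(j/N) = \partial_x K_t(j/N) + O(1/N)$ from Taylor expansion produces the claimed $N|\nabla_N^- \lambda_t(j) - \partial_x K_t(j/N)| \le C$. The main obstacle throughout is the boundary analysis: the naive $O(1)$ consistency error at $j = 0, N-1$ is too large for a crude stability argument, and the sharp integrated heat-kernel bounds from Section \ref{sec11} together with the careful choice of boundary data for $K^{(1)}$ are what make both estimates work.
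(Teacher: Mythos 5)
Your treatment of the first estimate is essentially the paper's own proof: same decomposition $w_t=\lambda_t-K_t(\cdot/N)$, same $O(1/N)$ interior versus $O(1)$ boundary truncation error after invoking the Robin conditions, same Duhamel splitting, with the boundary source absorbed by the on-diagonal heat-kernel decay $q_u(j,k)\le C_0e^{C_0u}(N^2u)^{-1/2}$ of Corollary \ref{s03} (your integrated bound $\int_0^T q_u(j,0)\,du\le C\sqrt T/N$ is exactly what that corollary yields). This part is fine.

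For the gradient estimate you depart from the paper, and your version has a genuine gap: you cannot take vanishing initial data for the corrector $K^{(1)}$. The initial discrepancy $\lambda_0(j)-K_0(j/N)$ is only $O(1/N)$, not $O(1/N^2)$: since $-\gamma=E-E^2/(2N)+O(N^{-2})$ and the Riemann sum $(1/N)\sum_{k\le j}\rho_0(k/N)$ differs from $\int_0^{j/N}\rho_0$ by $(1/(2N))[\rho_0(j/N)-\rho_0(0)]+O(N^{-2})$, one finds $\lambda_0(j)=K_0(j/N)\{1+N^{-1}h(j/N)+O(N^{-2})\}$ with $h(x)=(E/2)[\rho_0(x)-\rho_0(0)]-(E^2/2)\int_0^x\rho_0$, which is generically nonzero. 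With $K^{(1)}_0=0$ the error $\lambda_0-K^*_0$ is already $O(1/N)$ at $t=0$, the semigroup only propagates it, so the claimed $\vert\lambda_t-K^*_t\vert\le C/N^2$ fails and differencing gives nothing. You must set $K^{(1)}_0=K_0\,h$; you then also owe a proof that $K^{(1)}$ (which solves an inhomogeneous problem with inhomogeneous Robin data, not covered by Lemma \ref{s2.13}) has the $C^{1,3}$-type regularity your Taylor expansions use, including compatibility of $K^{(1)}_0$ with the modified Robin condition at the corners. (There is also a sign slip: cancelling the residual $+(E/2N)\partial_x^2K$ requires $\partial_tK^{(1)}-\partial_x^2K^{(1)}+E\partial_xK^{(1)}=+(E/2)\partial_x^2K$.) The paper avoids all of this by observing that the discrete gradient $\gamma_t(j)=(\nabla_N^-\lambda_t)(j)$ itself solves the clean interior equation $\partial_t\gamma_t=\Delta_N\gamma_t-E\nabla_N^-\gamma_t$ with boundary values $\gamma_t(0),\gamma_t(N)$ determined by $\lambda_t(0),\lambda_t(N-1)$; the first estimate then makes this a Dirichlet problem with $O(1/N)$ boundary data and $O(1/N)$ truncation error, closed by the maximum principle and the absorbed-walk semigroup. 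If you want to keep the corrector route, fix the initial data as above and supply the regularity of $K^{(1)}$; otherwise the gradient bound is not established.
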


\begin{proof}
Fix $T>0$. In this proof, $C_0$ represents a finite constant which may
depend on the parameters $E$, $\beta$, $\alpha$, on the initial
condition $\rho_0$, and on $T$.  Let $w_t (j):=\lambda_t(j) -
K_t(j/N)$.  A simple computation shows that
\begin{equation}
\label{eq0}
(\p_t w_t) (j) \;=\; (\Omega w_t)(j) \;+\; \varphi(t,j)\;,
\end{equation}
where $\Omega$ has been introduced in \eqref{fg11} and where $\varphi(t,j)$ is
given by
\begin{equation*}
\left\{
\begin{split}
& N \big\{ (\nabla_N^+K_t)(j/N) - \alpha \, E \, K_t(j/N)\big\} -
(\partial_t K_t)(j/N)\;,
\quad j=0 \;, \\
& [(\Delta_N-\partial_x^2)K_t] (j/N) \;-\;
E[(\nabla_N^--\partial_x)K_t](j/N)\;, \quad  1\leq{j}\leq{N-2}\;, \\
& E\beta N K_t(j/N) \;-\; (N+E) (\nabla_N^- K_t)(j/N)
- (\partial_t K_t)(j/N) \;, \quad j=N-1 \;.
\end{split}
\right.
\end{equation*}
In view of the boundary conditions satisfied by $K_t$, we may replace
in the previous equation $\alpha \, E \, K_t(0)$ by $(\partial_x
K_t)(0)$ and $E \, \beta \, K_t([N-1]/N)$ by $E \, \beta \,
\{K_t([N-1]/N)- K_t(1)\} + (\partial_x K_t)(1)$. After these
replacements, recalling that $K_t$ and $\rho_0$ belong to
$C^4([0,1])$, we obtain that $\varphi(t,j)$ is absolutely bounded by
$C_0N^{-1}$ for $j$ in $\{1, \dots, N-2\}$ and by $C_0$ for $j=0$ and
for $j=N-1$.

Let $G_t(j) = \varphi_t(j) \mb 1\{1\le j\le N-2\}$, $U_t(j) = \varphi_t(j) -
G_t(j)$ so that $|G_t(j)|\le C_0N^{-1}$.  We may represent the
solution $w_t$ of \eqref{eq0} as
\begin{equation*}
w_t \;=\; e^{\Omega t} w_0 \;+\; \int_0^t e^{\Omega (t-s)} (G_s +
U_s) \, ds\;.
\end{equation*}
By Lemma \ref{s05}, $\Vert e^{\Omega t} w_0\Vert_M$ is bounded by
$C_0 e^{C_0 t} \Vert w_0 \Vert_M \le C_0 N^{-1}$ and $\Vert e^{\Omega
  (t-s)} G_s\Vert_M$ is absolutely bounded by $C_0 e^{C_0 (t-s)}
N^{-1} \le C_0 N^{-1}$. Furthermore, since $U_s$ vanishes everywhere
except at two points, by Corollary \ref{s03}, $\Vert e^{\Omega (t-s)}
U_s\Vert_M \le C_0 (t-s)^{-1/2} N^{-1}$ for all $N$ large
enough. Putting together all the previous estimates, we conclude that
$\Vert w_t\Vert_M$ is bounded by $C_0 N^{-1}$, proving the first
assertion of the lemma.

We turn to the second assertion. Let
\begin{equation*}
\gamma_t(j)=\left\{
\begin{split}
& [N/(N+E)] \alpha E \lambda_t(0)\;, \quad j=0\\
& (\nabla^-_N \lambda_t)(j)
\;, \quad 1\le j\le N-1\;, \\
&   \beta E \lambda_t(N-1)\;, \quad j=N\;.
\end{split}
\right.
\end{equation*}
It is not difficult to show that for $1\le j\le N-1$, $\gamma_t$
solves the equation
\begin{equation*}
\p_t \gamma_t(j) = (\Delta_N \gamma_t)(j) - E (\nabla_N^-\gamma_t)(j)\;. 
\end{equation*}
Clearly, $(\partial_x K)$ satisfies a similar equation where the
discrete differential operators are replaced by continuous
ones. Therefore, in view of \eqref{eq_linearized}, $w_t(j) =
\{\gamma_t(j) - (\partial_x K)(t,j/N)\}$, $0\le j \le N-1$, satisfies
\begin{equation}
\label{f02}
\left\{
\begin{split}
&w_t(0) =  \alpha E \{ [N/(N+E)] \lambda_t(0) - K(t,0)\}\;,\\
& \p_t w_t(j) = (\Delta_N w_t)(j) - E (\nabla_N^-w_t)(j) + \varphi(t,j)
\;, \quad 1\le j\le N-1\;, \\
& w_t(N) = \beta E \{ \lambda_t(N-1) - K(t,1)\}\;,
\end{split}
\right.
\end{equation}
where $\varphi(t,j)$ accounts for the difference between the discrete and
continuous derivatives, namely
\begin{equation*}
\varphi(t,j) \;=\; (\Delta_N v_t)(j/N) \;-\; (\partial^2_x v)(t,j/N)
\;-\; E \big\{ (\nabla_N^- v_t)(j/N) - (\partial_x v)(t,j/N)
\big\}\;,
\end{equation*}
where $v(t,j) = (\partial_x K)(t,j/N)$. 

Since $K_t$ belongs to $C^4([0,1])$, $\varphi$ is absolutely bounded by $C_0
N^{-1}$ uniformly in $t$ and $j$. By the first part of the proof and
by Lemma \ref{s05}, $w_t(0)$ and $w_t(N)$ are also absolutely bounded
by $C_0 N^{-1}$.

Let $w^*_t(j)$ be the solution of \eqref{f02} with the same initial
condition satisfied by $w_t(j)$, but with boundary conditions
$w^*_t(0) = C/N$, $w^*_t(N) = C/N$, where $C$ is a finite constant
such that $w_t(0) \vee w_t(N) \le C/N$ for all $0\le t\le T$. By the
maximum principle, $w_t(j) \le w^*_t(j)$ for $0\le t\le T$, $0\le j\le
N$. Denote by $\Omega_\dagger$ the generator of a weakly asymmetric random
walk on $\{0, \dots, N\}$ absorbed at $0$ and $N$. We may represent
$w^*_t$ as
\begin{equation*}
w^*_t \;=\; e^{\Omega_\dagger t} w_0 \;+\; \int_0^t e^{\Omega_\dagger (t-s)} \varphi_s
\, ds\;, 
\end{equation*}
and repeat the arguments presented in the first part of the proof to
conclude that $\Vert w^*_t\Vert_M \le C_0/N$. This provides an upper
bound for $w_t$. A lower bound can be derived along the same lines.
\end{proof}

Recall the definition of $\tilde{r}_t$ given in \eqref{f2.8}.

\begin{lemma}
\label{s12}
For every $T>0$,
\begin{equation*}
\sup_{N\geq 1} \sup_{0\le t\le T} \max_{1\le j\le N-1}
N \, \big\vert \tilde{r}_t(j) - \rho (t, j/N)  \big\vert \;<\; \infty.
\end{equation*}
\end{lemma}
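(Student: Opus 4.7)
The plan is to exploit the fact that the definition of the continuous Cole--Hopf transform $K$ gives $\partial_x K = E K \rho$, so that
\begin{equation*}
\rho(t, j/N) \;=\; \frac{(\partial_x K_t)(j/N)}{E\, K_t(j/N)} \, ,
\end{equation*}
and this has exactly the same algebraic shape as the definition \eqref{f2.8} of $\tilde{r}_t(j)$, namely a discrete derivative of $\lambda_t$ divided by $E$ times a value of $\lambda_t$. Thus the proof reduces to comparing two quotients whose numerators and denominators are each controlled by Lemma \ref{s07}.

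First, I would establish a uniform lower bound $K_t(x) \geq c_0 > 0$ on $[0,T] \times [0,1]$. This is immediate from the explicit formula for $K$: it is the exponential of a continuous function on the compact set $[0,T]\times[0,1]$, hence bounded above and below away from zero. Combined with Lemma \ref{s07}, which gives $|\lambda_t(j-1) - K_t((j-1)/N)| \leq C_0/N$, and with the fact that $K_t$ is $C^1$ so that $|K_t((j-1)/N) - K_t(j/N)| \leq C_0/N$, this yields $|\lambda_t(j-1) - K_t(j/N)| \leq C_0/N$ and, for all $N$ large enough, $\lambda_t(j-1) \geq c_0/2$ uniformly in $t$ and $j$.

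Next, set $A = (\nabla_N^- \lambda_t)(j)$, $B = (\partial_x K_t)(j/N)$, $\tilde A = E\lambda_t(j-1)$, $\tilde B = E K_t(j/N)$. The second assertion of Lemma \ref{s07} gives $|A - B| \leq C_0/N$, and the preceding paragraph gives $|\tilde A - \tilde B| \leq C_0/N$ together with $|\tilde A|, |\tilde B| \geq E c_0/2$. Writing
\begin{equation*}
\tilde{r}_t(j) - \rho(t, j/N) \;=\; \frac{A}{\tilde A} - \frac{B}{\tilde B} \;=\; \frac{(A-B)\tilde B \,-\, (\tilde A - \tilde B)\, B}{\tilde A \, \tilde B} \, ,
\end{equation*}
and using that $B = (\partial_x K_t)(j/N)$ and $\tilde B$ are uniformly bounded (since $K_t \in C^{2,4}$ by the regularity statement in Lemma \ref{s2.13}), the right-hand side is absolutely bounded by $C/N$ uniformly in $t \in [0,T]$ and $j \in \{1, \dots, N-1\}$. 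This gives the claim, and absorbing the finitely many small-$N$ cases into the constant yields the uniform bound in $N \geq 1$.

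The main work has already been done in Lemma \ref{s07}; there is no significant obstacle here beyond the routine algebraic identity for the difference of two quotients and the uniform positivity of $K$. The only point demanding a little care is making sure the discrete argument $j-1$ in $\lambda_t(j-1)$ is matched with the continuous argument $j/N$ of $K_t$; this costs one application of the $C^1$ regularity of $K_t$ but does not worsen the $1/N$ rate.
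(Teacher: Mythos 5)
Your proof is correct and follows essentially the same route as the paper: both arguments reduce the claim to the two estimates of Lemma \ref{s07} plus a uniform positive lower bound on the denominator, the paper simply presenting the quotient comparison by multiplying through by $E\lambda_t(j-1)$ and invoking $\partial_x K_t = E\rho_t K_t$ rather than via the explicit difference-of-fractions identity. The only cosmetic difference is that the paper takes the lower bound on $\lambda_t$ directly from Lemma \ref{s08}, whereas you derive it from the positivity of $K$ together with Lemma \ref{s07}; both are valid.
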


\begin{proof}
By definition of $\tilde{r}_t$ and by the uniform lower bound for
$\lambda_{t}$, proved in Lemma \ref{s08},
\begin{equation*}
\big\vert \tilde{r}_t(j) - \rho (t, j/N)  \big\vert \;\leq\;
C_0\, \big| (\nabla_N^-\lambda_t)(j)- E\lambda_t(j-1)\rho (t, j/N) \big|
\end{equation*}
for some finite constant $C_0$, whose value may change from line to
line.  Since $(\partial_x K_t)(j/N) = E\,\rho (t,j/N)\,K_t(j/N)$ and
since $\rho$ is bounded, the right hand side of the previous
expression is less than or equal to
\begin{equation*}
C_0 \Big\{ \big |(\nabla_N^-\lambda_t)(j) - (\partial_x K_t)(j/N)
\big| \;+\; \big| K_t(j/N)-\lambda_t(j-1)\big| \Big\}\;.
\end{equation*}
The result follows from Lemma \ref{s07} and the smoothness of $K$.
\end{proof}

\begin{lemma}
\label{s2.10}
For every $T>0$,
\begin{equation*}
\sup_{N\geq 1} \,\, \sup_{0\leq t\leq T} \,\, \max_{1\leq j\leq N-2}
\big|\nabla_N^+\tilde{r}_t(j)\big| \;<\; \infty\;.
\end{equation*}
\end{lemma}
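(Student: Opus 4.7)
The plan is to exploit Lemma \ref{s12} to reduce the estimate on the discrete gradient of $\tilde r_t$ to the Lipschitz continuity of $\rho(t,\cdot)$. Concretely, for $1\le j\le N-2$, I would insert and subtract the values of $\rho(t,\cdot)$ at the neighbouring grid points, writing
\begin{equation*}
\tilde r_t(j+1) - \tilde r_t(j) \;=\; \bigl[\tilde r_t(j+1) - \rho(t,(j+1)/N)\bigr] \;-\; \bigl[\tilde r_t(j) - \rho(t,j/N)\bigr] \;+\; \bigl[\rho(t,(j+1)/N) - \rho(t,j/N)\bigr].
\end{equation*}

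By Lemma \ref{s12}, the first and third brackets are each bounded in absolute value by $C_0/N$, uniformly in $N\ge 1$, $t\in[0,T]$, and $j$. For the middle bracket, the mean value theorem yields a bound $N^{-1}\sup_{0\le t\le T}\|\partial_x\rho(t,\cdot)\|_\infty$. To see that this sup is finite, I would invoke the Cole-Hopf identity $\partial_x K = EK\rho$, so that $\rho = (\partial_x K)/(EK)$, and use the regularity $K\in C^{2,4}(\mathbb R_+\times[0,1])$ from Lemma \ref{s2.13} together with the strict positivity of $K$ (which follows from the maximum principle, since $K_0$ is strictly positive). These together give $\rho\in C^{1,3}([0,T]\times[0,1])$, so in particular $\partial_x\rho$ is uniformly bounded on $[0,T]\times[0,1]$.

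Summing the three estimates yields $|\tilde r_t(j+1) - \tilde r_t(j)|\le C/N$ for a constant $C$ depending only on $T$, $\rho_0$, and the parameters $E,\alpha,\beta$. Multiplying by $N$ gives the claimed bound on $|\nabla_N^+\tilde r_t(j)|$, uniformly in $N,t,j$.

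There is no real obstacle in this argument; everything follows from the triangle inequality once Lemma \ref{s12} and the $C^1$-regularity in space of $\rho$ are in hand. The only point deserving a brief check is the uniform positivity of $K$, used to turn the regularity of $K$ into regularity of $\rho$, but this is standard from the linear parabolic equation \eqref{eq_linearized} satisfied by $K$.
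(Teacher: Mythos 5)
Your proof is correct and follows essentially the same route as the paper: the same triangle-inequality decomposition inserting $\rho(t,(j+1)/N)$ and $\rho(t,j/N)$, with the outer terms controlled by Lemma \ref{s12} and the middle term by the spatial Lipschitz bound on $\rho$ (which the paper simply cites as $\rho\in C^{1,2}$, via Lemma \ref{s2.13}). Your extra remarks on the positivity of $K$ are consistent with what the paper establishes in Lemma \ref{s2.13} and Lemma \ref{s08}.
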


\begin{proof}
Write
\begin{equation*}
\begin{split}
\big| \nabla_N^+\tilde{r}_t(j) \big| \; &\leq\; 
N\big|\tilde{r}_t(j+1)-\rho (t, [j+1]/N )\big| 
\;+\; N\big|\rho (t,[j+1]/N)-\rho(t, j/N)\big| \\
& +\; N\big|\rho (t, j/N)-\tilde{r}_t(j)\big|\;.
\end{split}
\end{equation*}
The first and third terms on the right hand side of the last
expression are bounded by the previous lemma. To complete the proof it
remains to recall that $\rho$ is of class $C^{1,2}$.
\end{proof}

\begin{proof}[Proof of Proposition \ref{s13}]
By Lemma \ref{s12}, it is enough to show that
\begin{equation}
\label{f03}
\sup_{0\le t\le T} \, \, \max_{1\le j\le N-1}
N\, \big\vert r_t(j) - \tilde{r}_t(j) \big\vert \;\leq\; C_0\;.
\end{equation}

By definition of $r_t$ and $\gamma$, for $1\leq j\leq N-1$
\begin{equation*}
r_t(j) \;=\; \frac{\log \big ( 1+ [E/N] \, \tilde{r}_t(j) \big)}
{\log (1+ [E/N])} \;\cdot
\end{equation*}
Since, by Lemma \ref{s02}, 
\begin{equation*}
0 \;\le\;  \tilde{r}_t(j) \;\le\;  1\;,
\end{equation*}
for $1\leq j\leq N-1$, $0\le t\le T$, \eqref{f03} holds, which
completes the proof of the proposition.
\end{proof}

\section{Proof of Proposition \ref{s10}}
\label{sec6}

Fix $T>0$ and a sequence of probability measures $\{\mu_N: N\ge 1\}$
fulfilling \eqref{08}.

\begin{lemma}
\label{s11}
For every $T>0$ and $\delta>0$,
\begin{equation*}
\lim_{N\rightarrow{\infty}} \mathbb{P}_{\mu_N}
\Big[ \sup_{0\leq t\leq T} \frac{1}{\sqrt{N}}
\sum_{j\in\Lambda_N}[\xi_{t}(j)-\lambda_{t}(j)]^2
\;> \; \delta \Big] \;=\; 0\;.
\end{equation*}
\end{lemma}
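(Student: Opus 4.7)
My plan is to work with the difference $D_t(j) := \xi_t(j) - \lambda_t(j)$. Subtracting \eqref{fg9} from \eqref{fg2} gives
\[
D_t(j) \;=\; D_0(j) \;+\; \int_0^t (\Omega D_s)(j)\, ds \;+\; \mathcal{M}^N_t(j)\,,
\]
so Duhamel's formula yields $D_t = e^{\Omega t}D_0 + \int_0^t e^{\Omega(t-s)}d\mathcal{M}^N_s$, with $P_{s,t}(j,k) := (e^{\Omega(t-s)})(j,k)$ the fundamental solution analyzed in Section \ref{sec11}. The strategy is to bound $\mathbb E[\sum_j D_t(j)^2]$ uniformly in $N$ and $t\in[0,T]$; Markov's inequality combined with a time-discretization argument then delivers the full statement.

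For the initial piece, write
\[
\xi_0(j) - \lambda_0(j) \;=\; \lambda_0(j)\bigl(e^{-(\gamma/N) S_j}-1\bigr), \qquad S_j \;:=\; \sum_{k=1}^{j}[\eta_0(k) - \rho_0(k/N)]\,.
\]
Since $|\gamma|$ is bounded and $\lambda_0(j)$ is uniformly bounded away from $0$ and $\infty$ (directly from its definition in \eqref{fg9}), the elementary bound $|e^{-x}-1|\le C|x|$ for $|x|\le C'$ together with $E[S_j^2]\le (E[S_j^4])^{1/2}\le \sqrt{A_2}\,N$ from hypothesis \eqref{08} yields $E[D_0(j)^2]\le C/N$ and hence $E[\sum_j D_0(j)^2]\le C$. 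The $\ell^2$ boundedness of $e^{\Omega t}$ expected from Section \ref{sec11} then propagates this uniformly in $t\in[0,T]$. For the stochastic piece, It\^o's isometry together with the orthogonality relation \eqref{VQuad} gives
\[
\mathbb E\Bigl[\sum_j\Bigl(\int_0^t\!\sum_k P_{s,t}(j,k)\, d\mathcal{M}^N_s(k)\Bigr)^{\!2}\,\Bigr] \;=\; E^2\!\int_0^t\sum_k\Bigl(\sum_j P_{s,t}(j,k)^2\Bigr)\mathbb E\bigl[\xi_s(k)^2\mathfrak{h}_k(\eta_s)\bigr]\, ds\,,
\]
and combining the local-CLT-type kernel estimate $\sum_j P_{s,t}(j,k)^2\le C/(N\sqrt{t-s})$ (anticipated from Section \ref{sec11} by analogy with the $(t-s)^{-1/2}$ factor in Corollary \ref{s03}), the bound $\mathfrak{h}_k\le 2$, and the uniform second-moment bound on $\xi_s(k)$ from Section \ref{sec4} yields an upper bound of order $\sqrt{t}$. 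Hence $\mathbb E[\sum_j D_t(j)^2]\le C$ uniformly in $t\in[0,T]$.

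The main obstacle is upgrading this pointwise-in-$t$ estimate to a supremum, since the stochastic integral $\int_0^t e^{\Omega(t-s)}d\mathcal{M}^N_s$ is not a martingale in $t$ (its integrand depends on $t$ through $P_{s,t}$). The plan is to discretize $[0,T]$ on a mesh $\{t_m = mT/K_N\}$ with $K_N\to\infty$ chosen appropriately, bound $\max_m \mathbb E[F_{t_m}^2]$ (where $F_t := \sum_j D_t(j)^2$) using fourth-moment bounds on $\xi_t - \lambda_t$ inherited from Section \ref{sec4}, apply Markov's inequality and a union bound at the grid points, and control the pathwise oscillation of $F$ on each subinterval by combining the identity $|\xi_t(j+1)-\xi_t(j)| = \xi_t(j)\eta_t(j+1)[e^{-\gamma/N}-1]$ from \eqref{fh1}, the uniform moment bounds on $\xi_t$ from Section \ref{sec4}, and the semigroup continuity $\|(e^{\Omega h}-I)f\|_{\ell^2} \le C\sqrt{h}\,\|\nabla_N^+ f\|_{\ell^2}$ applied to the deterministic piece $e^{\Omega t}D_0$.
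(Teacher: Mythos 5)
Your overall architecture is the one the paper uses: a fixed-time moment bound, a discretization of $[0,T]$ into $O(\tau^{-1})$ windows, a union bound at the grid points, and a control of the oscillation inside each window. The fixed-time part of your argument is correct (it is essentially a second-moment version of Lemma \ref{sd4}, proved there by the same Duhamel representation \eqref{fg10}, the kernel bound of Corollary \ref{s03} and the mass bound \eqref{f2.3}), and the grid-point union bound works exactly as you say provided the mesh satisfies $\tau\gg N^{-1}$.

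The genuine gap is the oscillation control, which is the heart of the lemma and is left at the level of a list of ingredients that do not assemble into an estimate. First, the ingredients you cite are mismatched with the task: \eqref{fh1} is a \emph{spatial} difference identity and the semigroup continuity bound only touches the deterministic piece $e^{\Omega t}D_0$; neither addresses the oscillation in $t$ of the stochastic convolution $\int_0^t e^{\Omega(t-s)}\,d\mathcal M^N_s$, which, as you yourself note, is not a martingale in $t$, so Doob's inequality is not available in your representation. Second, a crude pathwise bound fails quantitatively: over a window of length $\tau$ the drift of $\xi$ satisfies only $|(\Omega\xi_s)(j)|\le C N\xi_s(j)$ (from \eqref{fh1} and \eqref{02}), giving an increment of order $N\tau$ per site, which is \emph{not} small on the mesh scale $\tau\gg N^{-1}$ forced by your union bound; so one must exploit cancellations, not absolute values. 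The paper resolves this by applying Dynkin's formula directly to $F_s=\sum_j D_s(j)^2$, obtaining the decomposition \eqref{06} in which the dangerous part of the drift appears as a Dirichlet-form--type term with a favorable sign plus a carr\'e-du-champ term $\sum_j\{(\Omega_2\xi^2)(j)-2\xi(j)(\Omega\xi)(j)\}$ controlled via \eqref{fh1}, a genuinely negative term, and a martingale controlled by Doob's inequality; the carr\'e-du-champ term forces the second constraint $\tau\ll N^{-2/3}$, and only then does one check that the window $N^{-1}\ll\tau\ll N^{-2/3}$ is nonempty. Your proposal neither produces the cancellation mechanism nor identifies any upper constraint on $\tau$, so the compatibility of the two competing requirements on the mesh is unverified. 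To complete the proof you should abandon the Duhamel form for the oscillation step and run It\^o's formula on $F$ itself, following the term-by-term analysis of \eqref{06}.
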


\begin{proof}
Fix $T>0$ and $\tau>0$. It is enough to show that for an appropriate
choice of $\tau$, for each $\delta>0$
\begin{equation}
\label{07}
\lim_{N\rightarrow{\infty}}\sup_{0\leq{t}\leq{T}}\frac{1}{\tau}
\, \mathbb{P}_{\mu_N} \Big[
\sup_{t\leq s\leq t+\tau} \frac{1}{\sqrt{N}}
\sum_{j\in\Lambda_N}[\xi_{s}(j)-\lambda_{s}(j)]^2
 >\delta \Big] \;=\; 0\;.
\end{equation}

A long and simple computation shows that for $t\le s$,
\begin{equation}
\label{06}
\begin{split}
& \frac{1}{\sqrt{N}} \sum_{j=0}^{N-1}[\xi_{s}(j)-\lambda_{s}(j)]^2
\;-\; \frac{1}{\sqrt{N}} \sum_{j=0}^{N-1}[\xi_{t}(j)-\lambda_{t}(j)]^2
\\
& \quad =\; \int_t^s \frac{2}{\sqrt{N}} \sum_{j=0}^{N-1}
(\xi_{r} -\lambda_{r})(j)\, [\Omega (\xi_{r} - \lambda_r)](j)\, dr
\\
& \qquad +\; \int_t^s \frac{1}{\sqrt{N}} \sum_{j=0}^{N-1}
\Big\{ (\Omega_2 \xi^2_{r})(j) - 2 \xi_{r}(j) (\Omega
\xi_{r})(j) \Big\} \, dr
\\
& \qquad -\; \int_t^s  \frac{a_N}{\sqrt{N}} \sum_{j=0}^{N-1}
\xi^2_{r}(j) \eta_{r}(j) \eta_{r}(j+1)  \, dr
\;+\; \big\{M_s - M_t\big\}\;,
\end{split}
\end{equation}
where $a_N= N^2\{e^{\gamma/N} - e^{-\gamma/N} + e^{-2\gamma/N} -1\}$
is a positive constant and $M_t$ a martingale.

Consider a sequence $\tau=\tau_N$ such that $N^{-1} \ll \tau_N \ll
N^{-2/3}$. We show below that with this choice \eqref{07} holds for
each term of the previous decomposition. For instance, by Lemma
\ref{sd4} and Tchebycheff inequality,
\begin{equation*}
\lim_{N\rightarrow{\infty}}\sup_{0\leq{t}\leq{T}}\frac{1}{\tau}
\, \mathbb{P}_{\mu_N} \Big[
\frac{1}{\sqrt{N}} \sum_{j=0}^{N-1}[\xi_{t}(j)-\lambda_{t}(j)]^2
>\delta \Big] \;=\; 0\;.
\end{equation*}
Hence, \eqref{07} holds for the second term on the left hand side of
\eqref{06} provided $N^{-1} \ll \tau_N$.

Repeating the arguments presented in the proof of Lemma \ref{s06}, we
can show that the expression inside the first integral on the right
hand side of \eqref{06} is bounded by
\begin{equation*}
\frac{C_0}{\sqrt{N}} \sum_{j=0}^{N-1} [\xi_{r}(j) -\lambda_{r}(j)]^2
\end{equation*}
for some finite constant $C_0$. To show that \eqref{07} holds for this
term it is therefore enough to apply Markov inequality and to
recall the statement of Lemma \ref{sd4}. No condition on $\tau_N$ is
needed in this argument due to the time integral.


The expression inside the integral in the second term on the right
hand side of \eqref{06} is bounded by
\begin{equation*}
\frac{C_0}{\sqrt{N}} \Big\{ \sum_{j=0}^{N-1} N^2[\xi_{r}(j+1) - \xi_{r}(j)]^2 \;+\;
N \xi_r(0)^2 \;+\; N \xi_r(N-1)^2 \Big\}
\end{equation*}
for some finite constant $C_0$. By \eqref{02}, $\xi_r(0)^2$ and
$\xi_r(N-1)^2$ are bounded above by $C_0 N^{-1} \sum_{j\in\Lambda_N}
\xi_{r}(j)^2$, and $|\xi_{r}(j+1) - \xi_{r}(j)|$ is less than or equal
to $ (e^{-\gamma/N}-1) \xi_{r}(j)$. The previous expression is thus
less than or equal to $C_0 N^{-1/2} \sum_{j\in\Lambda_N}
\xi_{r}(j)^2$.  By Tchebycheff and H\"older inequalities,
\begin{equation*}
\mathbb{P}_{\mu_N}
\Big[  \sup_{t\leq s\leq t+\tau} \int_t^s
\frac{1}{\sqrt{N}} \sum_{j=0}^{N-1} \xi_{r}(j)^2\, dr
\;> \; \delta \Big] \;\le\; \frac {N^2 \tau^3}{\delta^4}
\mathbb{E}_{\mu_N}  \Big[ \int_t^{t+\tau}
\frac{1}{N} \sum_{j=0}^{N-1} \xi_{r}(j)^8\, dr \Big]\;.
\end{equation*}
By Lemma \ref{s09}, this expression is bounded above by $C_0 N^2
\tau^4 \delta^{-4}$. The contribution of the second term on the right
hand side of \eqref{06} to \eqref{07} is thus bounded by $C_0 N^2
\tau^3 \delta^{-4}$, which vanishes, as $N\to\infty$, provided $\tau_N
\ll N^{-2/3}$.

Since the third term in \eqref{06} is negative, it remains to consider
the martingale $M_t$. Its quadratic variation $\<M\>_t$ is such that
\begin{equation*}
\<M\>_s \;-\; \<M\>_t \;\le\;  \int_t^s
\frac{C_0}N \sum_{j=0}^{N-1} \xi_r(j)^2 \Big\{ \frac 1{N^2} \, \xi_r(j)^2 +
[\xi_r(j) - \lambda_r(j)]^2 \Big\}\, dr
\end{equation*}
for some finite constant $C_0$ and all $t\le s$. Therefore, by Doob's
inequality,
\begin{equation*}
\begin{split}
& \mathbb{P}_{\mu_N} \Big[
\sup_{t\leq s\leq t+\tau} |M_s - M_t| >\delta \Big] \\
&\quad \le\; \frac{C_0}{\delta^2} \, \mathbb{E}_{\mu_N} \Big[
\int_t^{t+\tau} \frac{1}N \sum_{j=0}^{N-1} \xi_r(j)^2 \Big\{
\frac 1{N^2} \, \xi_r(j)^2 + [\xi_r(j) - \lambda_r(j)]^2 \Big\}\, dr
\Big]\;.
\end{split}
\end{equation*}
By Lemmas \ref{s09} and \ref{sd4}, this expectation is bounded above
by $C_0 \tau N^{-1}$. Hence, \eqref{07} holds for the martingale part
in \eqref{06}, which proves the lemma.
\end{proof}


\begin{corollary}
\label{s2.9}
For every $T>0$, $\delta>0$ and $a<1$,
\begin{equation*}
\lim_{N\to\infty} \mathbb{P}_{\mu_N} \Big[
\sup_{0\leq{t}\leq{T}} |\xi_{t}(0)-\lambda_{t}(0)| > \delta \Big]\;=\;
0\;, \quad 
\lim_{N\rightarrow \infty} \mathbb{P}_{\mu_N}
\Big[\inf_{0\leq{t}\leq{T}} \frac{\xi_{t}(0)}{\lambda_{t}(0)}
<a\Big]\;=\;0 \;.   
\end{equation*}
\end{corollary}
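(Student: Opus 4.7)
Both assertions are derived from Lemma \ref{s11} by means of a discrete Sobolev-type interpolation.

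\emph{Reduction of the ratio statement.} By Lemma \ref{s08} there exists $c_0>0$ such that $\lambda_t(0)\geq c_0$ uniformly in $N\geq 1$ and $t\in[0,T]$. Consequently, the event $\{\inf_{t\le T}\xi_t(0)/\lambda_t(0)<a\}$ entails $\sup_{t\le T}|\xi_t(0)-\lambda_t(0)|\geq (1-a)c_0$, so the second assertion follows from the first applied to $\delta=(1-a)c_0$.

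\emph{Sobolev-type interpolation.} Set $u_t(j):=\xi_t(j)-\lambda_t(j)$. For every $1\leq J\leq N$, writing $u_t(0)=u_t(j)-\sum_{i=0}^{j-1}[u_t(i+1)-u_t(i)]$, applying Cauchy--Schwarz, and averaging over $0\leq j<J$ yields
\begin{equation*}
u_t(0)^2\;\leq\;\frac{2}{J}\sum_{j=0}^{J-1}u_t(j)^2\;+\;2J\sum_{i=0}^{N-2}\bigl[u_t(i+1)-u_t(i)\bigr]^2.
\end{equation*}
The gradients are controlled as follows: by the identity \eqref{fh1}, $|\xi_t(i+1)-\xi_t(i)|\leq (E/N)\xi_t(i)$ and $\xi_t(\cdot)$ is non-decreasing, so $\xi_t(i)\leq\xi_t(N-1)$; by Lemma \ref{s07} together with the $C^1$-regularity of $K$, $|\lambda_t(i+1)-\lambda_t(i)|\leq C_0/N$. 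Summing over $i$ gives $\sum_{i}[u_t(i+1)-u_t(i)]^2\leq (C_0/N)(1+\xi_t(N-1)^2)$ for some constant $C_0$.

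\emph{Conclusion.} Choosing $J=\lfloor N^{3/4}\rfloor$ and taking $\sup_{t\leq T}$, the previous inequalities yield
\begin{equation*}
\sup_{t\leq T}u_t(0)^2\;\leq\;2\,N^{-1/4}\sup_{t\leq T}\Bigl\{N^{-1/2}\sum_{j\in\Lambda_N}u_t(j)^2\Bigr\}\;+\;2C_0\,N^{-1/4}\bigl(1+\sup_{t\leq T}\xi_t(N-1)^2\bigr).
\end{equation*}
The first term vanishes in probability by Lemma \ref{s11}. For the second term it suffices to show that the family $\{\sup_{t\le T}\xi_t(N-1)\}_{N\ge 1}$ is tight; this is obtained by applying Doob's $L^p$ inequality to the martingale $\mc M^N_t(N-1)$ in \eqref{fg2} and Gronwall's lemma to the linear drift $(\Omega\xi_s)(N-1)$, upgrading the uniform-in-$(t,j)$ moment bounds of Lemma \ref{s09} to the required $\sup_{t\le T}$ estimate.

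\emph{Main obstacle.} The technically delicate point is precisely this a priori uniform-in-$t$ control of $\sup_{t\le T}\xi_t(N-1)$: Lemma \ref{s09} controls $\mathbb{E}[\xi_t(N-1)^p]$ only for fixed $t$, and transferring this to the supremum requires care because the drift $(\Omega\xi_s)(j)$ has coefficients of size $O(N)$. Everything else in the argument is elementary once Lemma \ref{s11} and the gradient bounds from \eqref{fh1} and Lemma \ref{s07} are in hand.
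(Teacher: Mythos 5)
Your proof is correct and follows essentially the same route as the paper's: both reduce the second assertion to the first via the lower bound of Lemma \ref{s08}, and both control $\xi_t(0)-\lambda_t(0)$ by comparison with nearby sites so as to invoke Lemma \ref{s11} (the paper uses the two-sided multiplicative bound \eqref{02} and averages over $j\le \epsilon N$ instead of telescoping gradients, but this is only a cosmetic difference). The ``main obstacle'' you flag is not actually one: Lemma \ref{s09} as stated already places the supremum over $t\in[0,T]$ inside the expectation, so the tightness of $\sup_{t\le T}\xi_t(N-1)$ follows from it directly by Markov's inequality, with no need for a further Doob/Gronwall argument.
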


\begin{proof}
By the triangular inequality, by Lemma \ref{s02} and by
\eqref{02}, $[\xi_{t}(0)-\lambda_{t}(0)]^2$ is bounded by
\begin{equation*}
C_1 \Big\{ \Big(\frac jN\Big)^2 \xi_t(0)^2 \;+\;
[\xi_t(j) - \lambda_t(j)]^2 \;+\; \Big(\frac jN\Big)^2 \lambda_t(0)^2
\Big\}\;,
\end{equation*}
for some finite constant $C_1$ and all $j\in\Lambda_N$. In view of
Lemma \ref{s09} and Lemma \ref{s05}, averaging over $0\le j\le
\epsilon N$, the first assertion of the corollary follows from Lemma
\ref{s11}.

By Lemma \ref{s08}, there exists a positive constant $c_0$, depending
only on $\rho_0$, $E$, $\alpha$, $\beta$ and $T$, such that
$\lambda_t(j) \ge c_0$ for all $0\le t\le T$, $0\le j\le N-1$.  Let
$\delta = c_0(1-a)>0$ so that
\begin{equation*}
\mathbb{P}_{\mu_N} \Big[\inf_{0\leq{t}\leq{T}}
\frac{\xi_{t}(0)}{\lambda_{t}(0)} <a \Big]
\;\le\; \mathbb{P}_{\mu_N} \Big[
\sup_{0\leq{t}\leq{T}} |\xi_{t}(0)-\lambda_{t}(0)| > \delta \Big]\;.
\end{equation*}
Hence, the second assertion of the corollary follows from the first one. 
\end{proof}


\begin{proof}[Proof of Proposition \ref{s10}]
By Lemma \ref{s02} and by \eqref{02}, $\xi_t(j)/\lambda_t(j) \ge
e^{\gamma} \xi_t(0)/\lambda_t(0)$ for all $j\in\Lambda_N$. Therefore,
by the second assertion of Corollary \ref{s2.9}, for every
$a<e^\gamma$,
\begin{equation*}
\lim_{N\rightarrow \infty} \mathbb{P}_{\mu_N}
\Big[\inf_{0\leq{t}\leq{T}} \, \min_{0\le j\le N-1}
\frac{\xi_{t}(j)}{\lambda_{t}(j)} < a \Big]\;=\;0 \;.
\end{equation*}
Fix $a<e^\gamma$ and denote by $\Lambda^c_a$ the previous set of
trajectories.

For each $0<\delta <1$ there exists a finite constant $C(\delta)$ such
that
\begin{equation*}
|\log (z) + 1 - z| \leq C(\delta)\, |1-z|^2 \;,\quad z\ge \delta\;.
\end{equation*}
Therefore, on the set $\Lambda_a$, by Lemma \ref{s08} applied to the
function $\lambda_t$, for every function $\phi : [0,T]\times
\Lambda^+_N \to \bb R$ satisfying the assumptions of the proposition,
\begin{equation*}
|R_t^N(\phi_t)| \;\leq\; \frac{C_1} {\sqrt{N}} \sum_{j=0}^{N-1} 
|(\nabla^+_N \phi_t)(j)|\,
\frac{(\xi_t(j)-\lambda_t(j))^2}{\lambda_{t}^2(j)}
\;\leq\; \frac{C'_1} {\sqrt{N}} \sum_{j=0}^{N-1}
(\xi_t(j)-\lambda_t(j))^2\;,
\end{equation*}
for some finite constant $C_1$. Hence, the assertion of the
proposition follows from Lemma \ref{s11}.
\end{proof}

\section{Proof of Proposition \ref{s2.7}}
\label{sec}

Fix a density profile $\rho_0$ satisfying the assumptions of Theorem
\ref{s2.1} and denote by $\rho(t,x)$ the solution of the viscous
Burgers equation \eqref{vBe} with initial condition $\rho_0$.  Let
$\{\mu_N:N\ge 1\}$ be a sequence of probability measures on $\Sigma_N$
for which \eqref{08} holds.

Denote by $\Omega^*$ the adjoint operator of $\Omega$ with respect to
the counting measure. An elementary computation gives that
\begin{equation*}
\left\{
\begin{array}{l}
\vphantom{\Big\{}
(\Omega^* f)(0) \;=\; (1 - \alpha) E N f(0) \;+\; (E+N) (\nabla_N^+
f)(0) \;, \\
\vphantom{\Big\{}
(\Omega^* f)(j) \;=\; (\Delta_N f)(j) \;+\; E (\nabla_N^+ f)(j) \;,
\quad 1\le j\le N-2\;, \\
\vphantom{\Big\{}
(\Omega^* f)(N-1) \;=\; - (1-\beta) E N f(N-1) \;-\; N 
(\nabla_N^- f) (N-1)\;.
\end{array}
\right.
\end{equation*}
Note that $\Omega^*$ has exactly the same structure as $\Omega$.  Fix
a function $\psi : \Lambda_N \to \bb R$, and denote by $\psi(s,j)$,
$j\in \Lambda_N$, $s\ge 0$ the solution of
\begin{equation}
\label{f2.10}
\left\{
\begin{split}
& \partial_s \psi_s = \Omega^* \psi_s \;,\\  
& \psi_0(j) = \psi (j)\;.
\end{split}
\right.
\end{equation}

\begin{lemma}
\label{s2.14}
Assume that $F$ belongs to $C^4([0,1])$ and let $F(t,x)$ be the
solution of the linear equation
\begin{equation*}
\left\{
\begin{split}
& \partial_s F \;=\; \partial^2_x F  \;+\; E \partial_x F\;, \\  
& F(0,x) \;=\; F(x)\;,\; x\in [0,1]\;,
\end{split}
\right.
\end{equation*}
with boundary conditions 
\begin{equation*}
(\partial_x F) (s,0) = - (1-\alpha) E F(s,0)\;, \quad 
(\partial_x F) (s,1) = - (1-\beta) E F(s,1)\;, \quad s\ge 0 \;.
\end{equation*}
Suppose that there exists a finite constant $C_0$ such that 
\begin{equation*}
\max_{j\in\Lambda_N} \big|\psi(j) - F(j/N)  \big\vert \;\le\;
C_0/N\; \cdot
\end{equation*}
Then, for every $T>0$, there exists a finite constant $C_0$ such that 
\begin{equation*}
\sup_{0 \leq t \leq T} \max_{j\in\Lambda_N} \big\vert \psi_t (j) 
- F_t (j/N) \big\vert \;\le\; C_0/N\;. 
\end{equation*}
\end{lemma}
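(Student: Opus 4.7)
The argument will closely mirror that of Lemma \ref{s07}, with the operator $\Omega$ replaced by $\Omega^*$, exploiting the remark in the text that $\Omega^*$ has the same structure as $\Omega$ (so the heat-kernel type bounds of Lemma \ref{s05} and Corollary \ref{s03} apply to $e^{\Omega^* t}$ as well). Set
\begin{equation*}
w_t(j) \;:=\; \psi_t(j) \;-\; F(t, j/N)\;, \quad j\in\Lambda_N \;,\;\; 0\le t\le T\;.
\end{equation*}
By hypothesis, $\|w_0\|_M \le C_0/N$, and a direct computation using the equations satisfied by $\psi_t$ and by $F$ gives
\begin{equation*}
(\partial_t w_t)(j) \;=\; (\Omega^* w_t)(j) \;+\; \varphi(t,j)\;,
\end{equation*}
where $\varphi(t,j) = (\Omega^* F_t)(j) - \partial_t F(t, j/N)$ is the usual consistency error of the scheme.

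The next step is to control $\varphi(t,j)$. For interior sites $1\le j\le N-2$, Taylor expansion and $F \in C^4$ (which follows from the PDE regularity theory applied to the linear equation with $F(0,\cdot) \in C^4$) yield $|\varphi(t,j)| \le C_0/N$. For the boundary sites $j=0$ and $j=N-1$, the naive expansion of $\Omega^*$ produces a term of order $N$; here the point is to use the Robin boundary conditions $(\partial_x F)(t,0) = -(1-\alpha)EF(t,0)$ and $(\partial_x F)(t,1) = -(1-\beta)EF(t,1)$ to cancel exactly this leading term, leaving an $O(1)$ remainder. This is the only place where the boundary conditions enter, and it is the key computation (analogous to the substitution performed in the proof of Lemma \ref{s07} using $(\partial_x K_t)(0) = E\alpha K_t(0)$).

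Once this is established, write $\varphi = G + U$ with $G_t(j) = \varphi(t,j)\mathbf{1}\{1 \le j \le N-2\}$ and $U_t = \varphi - G_t$, so that $\|G_t\|_M \le C_0/N$ and $U_t$ is supported on the two boundary points with $\|U_t\|_M \le C_0$. Using Duhamel's formula,
\begin{equation*}
w_t \;=\; e^{\Omega^* t} w_0 \;+\; \int_0^t e^{\Omega^*(t-s)}(G_s + U_s)\, ds\;,
\end{equation*}
one applies Lemma \ref{s05} to bound $\|e^{\Omega^* t} w_0\|_M \le C_0 e^{C_0 T}/N$ and $\|e^{\Omega^*(t-s)} G_s\|_M \le C_0 e^{C_0(t-s)}/N$, and Corollary \ref{s03} (since $U_s$ is supported on at most two sites) to obtain $\|e^{\Omega^*(t-s)} U_s\|_M \le C_0 (t-s)^{-1/2} N^{-1}$ for all $N$ large. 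Summing the three contributions and noting that $\int_0^t (t-s)^{-1/2}\, ds = 2\sqrt{t}$ is integrable, we conclude $\|w_t\|_M \le C_0/N$ uniformly in $0\le t\le T$.

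The main obstacle is really the boundary computation: one must check that, after substituting the Robin boundary condition into $\varphi(t,0)$ and $\varphi(t,N-1)$, the $O(N)$ terms cancel and only an $O(1)$ remainder survives (which is then absorbed via the smoothing estimate of Corollary \ref{s03}). Everything else is a routine adaptation of the argument already carried out for Lemma \ref{s07}, with $\Omega$ replaced by $\Omega^*$.
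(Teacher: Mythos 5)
Your proposal is correct and follows essentially the same route as the paper: the paper's proof of Lemma \ref{s2.14} simply defines $w_t(j)=\psi_t(j)-F(t,j/N)$, observes that it satisfies the analogue of \eqref{eq0} with $\Omega^*$ in place of $\Omega$ and a consistency error that is $O(N^{-1})$ in the interior and $O(1)$ at the two boundary sites (after using the Robin conditions), and then invokes verbatim the Duhamel/Lemma \ref{s05}/Corollary \ref{s03} argument of Lemma \ref{s07}. The only minor inaccuracy is your claim that $F\in C^4$ in space; the note after Lemma \ref{s2.13} only yields $F\in C^{1,3}(\bb R_+\times[0,1])$, but $C^3$ spatial regularity already gives the required $O(N^{-1})$ interior consistency error, so nothing is lost.
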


\begin{proof}
By the note following Lemma \ref{s2.13}, $F$ belongs to $C^{1,3}(\bb
R_+\times [0,1])$.  As in the proof of Lemma \ref{s07}, let $w_t
(j):=\psi_t(j) - F(t,j/N)$.  As $F$ belongs to $C^{1,3}(\bb R_+\times
[0,1])$, equation \eqref{eq0} holds with $\Omega$ replaced by
$\Omega^*$ for some function $\varphi(t,j)$ which is absolutely
bounded by $C_0N^{-1}$ for $j$ in $\{1, \dots, N-2\}$ and by $C_0$ for
$j=0$ and for $j=N-1$. Since, by assumption, the initial condition $w_0$
is also uniformly bounded by $C_0/N$, the arguments presented in the
proof of the first assertion of Lemma \ref{s07} yield that $\psi_t(j)
- F_t(j/N)$ is uniformly bounded by $C_0/N$.
\end{proof}

Recall the definition of the operator $\ms A_\phi$ introduced at the
beginning of Section \ref{sec2.1}. The proof of Proposition \ref{s2.7}
relies on the following remarkable identity, derived from a long, but
elementary, computation. For every pair of functions $g : \Lambda^+_N
\to \bb R$, $\phi : \Lambda_N \to \bb R$,
\begin{equation}
\label{f2.7}
\Omega^* \Big( \frac {\nabla^+ g} {\phi} \Big) (j) \;-\; 
(\nabla^+ g)(j)\, \frac{(\Omega  \phi)(j)}{\phi(j)^2} \;=\; 
\frac {[\nabla^+ (\ms A_\phi \, g)](j)}{\phi(j)} \;, \quad j\in
\Lambda_N\;.
\end{equation}

Identity \eqref{f2.7} explains the second identity in
\eqref{f2.5}. Indeed, for a time-independent function $g: \Lambda^+_N
\to \bb R$, since $\partial_s \lambda^{-1}_s = - \lambda_s^{-2} \,
\Omega \lambda_s$, due to \eqref{fg9}, \eqref{fg2} and an integration
by parts,
\begin{equation}
\label{f2.14}
\begin{split}
& J^N_s (g) \;-\; J^N_0 (g) \; =\; \frac{1}{\sqrt{N}} \sum_{j\in\Lambda_N} \int_0^s
\frac{(\nabla^+ g)(j)}{\gamma \, \lambda_r(j)} \, d\mc M^N_r(j) \\
& \quad\qquad +\; \frac{1}{\gamma \sqrt{N}} \sum_{j\in\Lambda_N} \int_0^s
\Big\{ \Omega^* \Big( \frac {\nabla^+ g} {\lambda_s} \Big) (j)
\;-\; (\nabla^+ g)(j)\, \frac{(\Omega  \lambda_s)(j)}{\lambda_s(j)^2}
\Big\}\, \xi_r(j) \, dr \;.
\end{split}
\end{equation}
By \eqref{f2.7}, the expression inside braces in the previous equation
is equal to $[\nabla^+ (\ms A_s \, g)](j)/\lambda_s (j)$, where $\ms
A_s = \ms A_{\lambda_s}$.  Hence, if we consider a time-dependent
function $g_s$ which solves \eqref{f2.1}, the additive part in the
previous decomposition of $J^N_s (g_s) \;-\; J^N_0 (g_0)$ vanishes,
yielding \eqref {f2.5}. \smallskip

\begin{remark}
\label{s2.15}
Fix a function $G$ in $C^2_0([0,1])$ and $t>0$. Let $g_s$ be the
solution of \eqref{f2.1} with final condition equal to $G$, $g(t,j) =
G(j/N)$, and let $\psi(s,j) = (\nabla_N^+ g_{t-s}) (j)/\lambda_{t-s}
(j)$, $j\in \Lambda_N$, $0\le s\le t$. By \eqref{f2.1} and
\eqref{f2.7}, in the time interval $[0,t]$, $\psi(s,j)$ solves the
equation \eqref{f2.10} with initial condition
\begin{equation*}
\psi_0(j) = (\nabla_N^+ G) (j/N)/\lambda_{t}(j)\;.
\end{equation*}
In particular, by Lemmas \ref{s08} and \ref{s05}, there exists a
finite constant $C_0$ such that for all $N\ge 1$,
\begin{equation}
\label{f2.19}
\sup_{0 \leq s\leq t}\,\,\big\Vert \psi_s \big\Vert_M \;\le\; C_0 \;.
\end{equation}
\end{remark} 

\begin{remark}
\label{s2.17}
Similarly, let $G(s,x)$ be the solution of \eqref{f2.2} with final
condition $G(t,x)=G(x)$. A computation, based on a continuous version
of equation \eqref{f2.7}, shows that in the time interval $[0,t]$, the
function $F_s = \partial_x G_{t-s}/K_{t-s}$ solves the equation
appearing in the statement of Lemma \ref{s2.14} with initial condition
$F(0,x)= (\partial_x G)(x)/K(t,x)$.
\end{remark}

Therefore, if $G$ belongs to $C^5_0([0,1])$, since $K$ belongs to
$C^{2,4}(\bb R_+\times [0,1])$, $F(0,x) = (\partial_x
G)(x)/K_{t}(x)$ belongs to $C^4([0,1])$. Moreover, by Lemmas \ref{s08}
and \ref{s07}, $\psi(0,j) - F(0,j/N)$ is uniformly bounded by
$C_0/N$. Therefore, by Lemma \ref{s2.14}, there exists a finite
constant $C_0$ for which for all $N\ge 1$,
\begin{equation}
\label{f2.20}
\sup_{0 \leq s\leq t}\, \max_{j\in\Lambda_N} \,\big\vert \psi_s(j) - F_s(j/N)
\big\vert \;\le\; C_0/N\;. 
\end{equation}

\begin{lemma}
\label{s2.16}
Fix $G$ in $C^5_0([0,1])$ and $t>0$. Denote by $G(s,x)$ the solution of
\eqref{f2.2} with final condition equal to $G$, and by $g$ the solution
of \eqref{f2.1} with the same final condition. Then, there exists a
finite constant $C_0$ such that for all $N\ge 1$,
\begin{equation*}
\big\Vert G(0,\cdot) - g(0,\cdot) \big\Vert_M \;\le\; C_0/N\;.  
\end{equation*}
\end{lemma}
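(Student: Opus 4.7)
The plan is to set $w(s, j) := g(s, j) - G(s, j/N)$ and treat it as the solution of a forced backward semi-discrete equation with zero terminal and boundary data, and then to apply a maximum principle for the Markov semigroup generated by $\ms A_s$. Because both $g(s, \cdot)$ and $G(s, \cdot)$ vanish at the boundary (the former since $(\ms A_s g)(0)=(\ms A_s g)(N)=0$ in \eqref{f2.1}, the latter by the Dirichlet condition in \eqref{f2.2}) and they agree at time $t$, one has $w(s, 0) = w(s, N) = 0$ and $w(t, j) = 0$. Subtracting \eqref{f2.1} from \eqref{f2.2} restricted to the grid yields
$$
-\partial_s w \;=\; \ms A_s w \;+\; \varphi,
$$
with consistency error
$$
\varphi(s, j) \;=\; (\ms A_s [G(s, \cdot/N)])(j) \;-\; \big\{ (\partial_x^2 G)(s, j/N) + E[1 - 2\rho(s, j/N)](\partial_x G)(s, j/N) \big\}.
$$

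The heart of the argument is to show $\sup_{0 \le s \le t}\|\varphi(s, \cdot)\|_M \le C_0/N$. I would first invoke parabolic regularity (Lemma \ref{s2.13} applied to \eqref{f2.2}, whose coefficient $\rho$ inherits smoothness from $\rho_0 \in C^4$) to conclude $G \in C^{1,4}([0, t] \times [0, 1])$. Taylor expansion then gives $\Delta_N[G(s, \cdot/N)](j) - (\partial_x^2 G)(s, j/N) = O(N^{-2})$ and $\nabla_N^\pm[G(s, \cdot/N)](j) - (\partial_x G)(s, j/N) = \pm (1/2N)(\partial_x^2 G)(s, j/N) + O(N^{-2})$. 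Next, using the expansion $(1-\tilde r)/(1 + (E/N)\tilde r) = 1 - \tilde r + O(1/N)$ together with $0 \le \tilde r_s(j) \le 1$ (Lemma \ref{s02}), the two drift pieces in $\ms A_s$ collapse to $E[1 - 2\tilde r_s(j)](\partial_x G)(s, j/N) + O(1/N)$; crucially, the $\pm (1/2N)\partial_x^2 G$ first-order corrections to $\nabla_N^\pm G$ cancel between the $(1-\tilde r)\nabla_N^+$ and $-\tilde r \nabla_N^-$ contributions. Finally, Lemma \ref{s12} allows one to replace $\tilde r_s(j)$ by $\rho(s, j/N)$ at cost $O(1/N)$, closing the estimate.

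The concluding step is standard. The coefficients of $\ms A_s$ read off from its definition are non-negative (since $0 \le \tilde r \le 1$), $\ms A_s$ annihilates constants in the interior, and it acts as zero at $j=0, N$; hence $\ms A_s$ generates an honest time-inhomogeneous Markov semigroup on $\Lambda_N^+$ with $\{0, N\}$ absorbing, which is a contraction on $L^\infty$ preserving zero boundary data. The Duhamel representation of the backward problem gives $w(0, \cdot) = \int_0^t P^{0, s}\varphi(s, \cdot)\, ds$, where $P^{0, s}$ is the corresponding two-parameter solution operator, and contractivity yields $\|w(0, \cdot)\|_M \le t \sup_s \|\varphi(s, \cdot)\|_M \le C_0/N$.

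The main obstacle I anticipate is the $O(1/N)$ bound on $\varphi$ rather than the routine maximum-principle step: each advection piece of $\ms A_s$ individually differs from the continuous drift by $O(1)$ when acting on $G(s, \cdot/N)$, and the reduction to $O(1/N)$ depends simultaneously on the specific asymmetric form of $\ms A_\phi$, on the $O(1/N)$ accuracy of $\tilde r_s$ as an approximation of $\rho_s$ (Lemma \ref{s12}), and on the cancellation of the first-order Taylor remainders in $\nabla_N^\pm$ against the $1 - 2\tilde r$ structure of the continuous drift.
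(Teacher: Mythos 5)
Your argument is correct in substance, but it follows a genuinely different route from the paper's. The paper exploits that both $G(0,\cdot)$ and $g_0$ vanish at $j=0$ to write $G(0,j/N)-g_0(j)$ as a telescoping sum of differences of discrete gradients, then identifies $\nabla_N^+g_0=\psi_t\lambda_0$ and $\partial_xG_0=F_tK_0$ (Remarks \ref{s2.15} and \ref{s2.17}) and concludes from the already-established bound \eqref{f2.20} on $\psi_s-F_s$ together with $|\lambda_0-K_0|\le C_0/N$; i.e.\ it recycles Lemma \ref{s2.14}, whose own proof handles the awkward Robin-type boundary terms of $\Omega^*$ via the semigroup estimates of Section \ref{sec11}. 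You instead run a direct consistency-plus-stability argument on $w=g-G(\cdot,\cdot/N)$. This works, and the stability step is actually cleaner than anything in the paper's proof: since $0\le\tilde r_s\le 1$, the operator $\ms A_s$ with its Dirichlet rows at $j=0,N$ is a genuine (sub-)Markov generator, so $L^\infty$-contractivity is immediate and no heat-kernel bounds are needed; the consistency error is $O(1/N)$ exactly as you say, with Lemma \ref{s12} supplying the replacement of $\tilde r_s$ by $\rho_s$. Two small corrections. First, the $\pm(2N)^{-1}\partial_x^2G$ Taylor corrections to $\nabla_N^{\pm}G$ do \emph{not} cancel: the coefficients $(1-\tilde r)/(1+(E/N)\tilde r)$ and $-\tilde r$ multiply them with the \emph{same} sign after accounting for the sign convention in $\nabla_N^-$, and they sum to $E(2N)^{-1}\partial_x^2G+O(N^{-2})$; this is harmless (it is $O(1/N)$ anyway), but the cancellation is not "crucial" as you claim. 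Second, Lemma \ref{s2.13} does not apply to \eqref{f2.2}, which is a backward equation with the variable coefficient $E(1-2\rho_s)$; the uniform $C^3$-in-$x$ regularity of $G(s,\cdot)$ you need should instead be read off from the identity $\partial_xG_r=F_{t-r}K_r$ together with $F\in C^{1,3}$ and $K\in C^{2,4}$, which is exactly the regularity the paper establishes before the statement of the lemma.
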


\begin{proof}
Since $G(s,0)=g_s(0)=0$ for $0\le s\le t$, for every $j\in\Lambda_N$,
by Remarks \ref{s2.15} and \ref{s2.17},
\begin{equation*}
\begin{split}
\big\vert G(0,j/N) - g_0(j) \big\vert \; &\le\;
\frac 1N \sum_{k=0}^{j-1} \big\vert  (\nabla^+_N G)(0,k/N) -  
(\nabla^+_N g_0)(k/N) \big\vert \\
\; & =\; \frac 1N \sum_{k=0}^{j-1} \Big\vert  N \int_{k/N}^{(k+1)/N}
F(t,y) K(0,y) \, dy -  \psi_t(k/N) \lambda_0(k) \Big\vert\;.
\end{split}
\end{equation*}
We have seen just before the statement of the lemma, that under the
assumptions that $G$ belongs to $C^5_0([0,1])$, $F(0,\,\cdot\,)$
belongs to $C^4([0,1])$. Therefore, by the proof of Lemma
\ref{s2.14}, $F$ belongs to $C^{1,3}([0,t]\times [0,1])$. The
assertion of the lemma follows from this remark, from the fact that
$\rho_0$ belongs to $C^1([0,1])$ and from \eqref{f2.20}.
\end{proof}

\begin{lemma}
\label{s2.5}
For each function $G$ in $C^{5}_0([0,1])$ and $t>0$, the quadratic
variation $\<M^N(t,G)\>_s$ of the martingale $M^N_s(t,G)$ converges in
$L^1(\bb P_{\mu_N})$ to
\begin{equation*}
2 \int_{0}^{s} \int_0^1 \sigma(\rho(r,x))\, [(\partial_ x
T_{t,r} G)(x)]^2 \, dx \, dr\;,
\end{equation*}
where $T_{t,r} G$ is the solution of \eqref{f2.2}.
\end{lemma}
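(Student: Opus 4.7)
By \eqref{f2.5} and \eqref{VQuad}, writing $\psi(s,j)=(\nabla_N^+ g_{t-s})(j)/\lambda_{t-s}(j)$ as in Remark \ref{s2.15}, the quadratic variation admits the explicit form
\[
\langle M^N(t,G)\rangle_s \;=\; \frac{E^2}{\gamma^2\,N}\sum_{j\in\Lambda_N}\int_0^s \psi(t-r,j)^2\,\xi_r(j)^2\,\mathfrak{h}_j(\eta_r)\,dr .
\]
The plan is to first dispose of the scalar prefactor and the test function using the earlier lemmas, and then to establish $L^1$ convergence of the remainder by computing its mean and showing that its variance vanishes. Since $e^{-\gamma/N}=1+E/N$ forces $\gamma_N=-E+O(1/N)$, the factor $E^2/\gamma^2$ differs from $1$ by $O(1/N)$. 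By Remark \ref{s2.17} and Lemma \ref{s2.14}, $\psi(t-r,j)$ differs from $F(t-r,j/N):=(\partial_x G)(r,j/N)/K(r,j/N)$ by $C_0/N$ uniformly on $[0,t]\times\Lambda_N$. Combined with the moment bounds of Lemma \ref{s09} and $0\le\mathfrak{h}_j\le 2$, each of these two cosmetic replacements costs only $O(1/N)$ in $L^1$. Since $F(t-r,x)^2 K(r,x)^2=[(\partial_x T_{t,r}G)(x)]^2$, the statement reduces to the $L^1$ convergence of
\[
\Xi^N_s \;:=\; \frac 1N\sum_{j\in\Lambda_N}\int_0^s F(t-r,j/N)^2\,\xi_r(j)^2\,\mathfrak{h}_j(\eta_r)\,dr
\]
to the right-hand side of the lemma.

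For the mean of $\Xi^N_s$, the key tool is Dynkin's formula for the semimartingale $\xi_r(j)^2$: combining \eqref{fg2} with \eqref{VQuad} yields
\[
E^2\,\mathbb{E}\big[\xi_r(j)^2\mathfrak{h}_j(\eta_r)\big]\;=\;\partial_r\mathbb{E}[\xi_r(j)^2]\;-\;2\,\mathbb{E}[\xi_r(j)(\Omega\xi_r)(j)] ,
\]
which reduces the computation to the two-point function $C_r(j,k)=\mathbb{E}[\xi_r(j)\xi_r(k)]$ at $|j-k|\le 1$. Because the martingales $\mathcal{M}^N(j)$ and $\mathcal{M}^N(k)$ are orthogonal for $j\ne k$ by \eqref{VQuad}, the off-diagonal covariance correction $\Gamma_r(j,k):=C_r(j,k)-\lambda_r(j)\lambda_r(k)$ satisfies the tensorized linear equation $\partial_r\Gamma=(\Omega_j+\Omega_k)\Gamma$, with a source supported on the diagonal and equal to $E^2\mathbb{E}[\xi_r(j)^2\mathfrak{h}_j(\eta_r)]$. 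Using a Duhamel representation against the semi-discrete two-particle heat kernel controlled in Section \ref{sec11}, together with Lemma \ref{s07} (to pass from $\lambda$ to $K$) and the identity $\partial_x K=EK\rho$, one identifies $\mathbb{E}[\xi_r(j)^2\mathfrak{h}_j(\eta_r)]=2K(r,j/N)^2\sigma(\rho(r,j/N))+o(1)$ uniformly on compact subsets of $(0,1)\times[0,t]$, and a Riemann-sum argument then gives the required limit of $\mathbb{E}[\Xi^N_s]$.

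The hardest step will be the self-consistent identification of the diagonal source in the $\Gamma_r$ equation, since that source is the very quantity to be computed. I would break the circularity by exploiting the exact algebraic identities $\xi_r(j)\eta_r(j+1)=E^{-1}\nabla_N^+\xi_r(j)$ and $\xi_r(j)\eta_r(j)=E^{-1}(1+E/N)\nabla_N^-\xi_r(j)$, which follow from \eqref{fh1} and $\eta^2=\eta$, writing $\xi_r=\lambda_r+(\xi_r-\lambda_r)$, and controlling the resulting cross and quadratic terms in $\xi_r-\lambda_r$ via Lemma \ref{s11} and the higher-moment bounds of Lemma \ref{s09}; the martingale variance of $\xi_r$ is exactly what generates the $2\sigma(\rho)$ factor, in agreement with the direct Duhamel computation. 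For $\mathrm{Var}(\Xi^N_s)$, I would expand the variance as a double space-time integral of covariances, apply the orthogonality \eqref{VQuad} and Cauchy--Schwarz, and use the fourth-moment bound of Lemma \ref{s09} together with the $1/N$ prefactor and boundedness of $F$ to obtain $\mathrm{Var}(\Xi^N_s)=O(1/N)$. Mean convergence and vanishing variance give $L^2$, hence $L^1$, convergence, completing the proof.
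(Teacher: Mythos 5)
Your overall strategy is a direct moment computation in the spirit of Dittrich--G\"artner: reduce $\<M^N(t,G)\>_s$ to $\Xi^N_s$, compute its mean through the two-point function of $\xi$, and kill the variance. The paper does something much softer: it replaces $\xi_r(j)^2$ by $\lambda_r(j)^2$ via Lemma \ref{sd4}, averages $\lambda_r^2\psi_{t-r}^2$ over macroscopic blocks of size $\epsilon N$ using the $b^2-a^2$ identity and the Dirichlet-form bound of Lemma \ref{s06}, and then invokes the hydrodynamic replacement Lemma \ref{replacement} to substitute the block average of $\mf h_{j+k}$ by $2\sigma(\rho(r,j/N))$; after that the expression is deterministic and converges by Lemma \ref{s07} and \eqref{f2.20}, so no variance computation is needed at all. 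Your route never uses Lemma \ref{replacement}, and that is exactly where the gaps appear.

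Two steps of your argument do not go through as written. First, the identification $\bb E[\xi_r(j)^2\mf h_j(\eta_r)]=2K(r,j/N)^2\sigma(\rho(r,j/N))+o(1)$: via \eqref{fh1} this reduces to quantities like $\bb E[(\nabla_N^-\xi_r)(j)\,(\nabla_N^+\xi_r)(j)]$, i.e.\ to second discrete differences of $C_r(j,k)=\bb E[\xi_r(j)\xi_r(k)]$ at the diagonal multiplied by $N^2$. The bounds actually available ($\bb E[(\xi_r(j)-\lambda_r(j))^2]\le C/N$ from Lemma \ref{sd4}, and the pointwise bound $0\le (\nabla_N^\pm\xi_r)(j)\le C\,\xi_r(j)$ from \eqref{02}) only show the resulting error terms are $O(1)$, not $o(1)$; one genuinely needs near-diagonal regularity of the correlation correction $\Gamma_r$, which requires the two-particle kernel for $\Omega_j+\Omega_k$ with a diagonal source. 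You assert that this kernel is ``controlled in Section \ref{sec11}'', but that section only treats the one-particle operators $\Omega_n$; the two-particle analysis (the hard part of \cite{dg}, made harder here by the boundary) appears nowhere in the paper, and you leave it as a plan (``I would break the circularity by\dots''). Second, the variance: Cauchy--Schwarz applied to $\mathrm{Cov}(\xi_r(j)^2\mf h_j(\eta_r),\xi_{r'}(k)^2\mf h_k(\eta_{r'}))$ gives only $O(1)$ per pair, hence $\mathrm{Var}(\Xi^N_s)=O(1)$ rather than $O(1/N)$; the orthogonality \eqref{VQuad} concerns the martingales $\mc M^N(j)$, not the observables $\xi_r(j)^2\mf h_j(\eta_r)$, so extracting decay in $|j-k|$ again requires the correlation analysis you have not performed. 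Both difficulties are bypassed in the paper by the $L^1$ statement of Lemma \ref{replacement}, which is the ingredient your proof is missing.
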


\begin{proof}
With the notation introduced just before the statement of the lemma, the
quadratic variation of the martingale $M^N_s(t,G)$ can be written as 
\begin{equation}
\label{f2.9}
\<M^N(t,G)\>_s \;=\; \int_{0}^{s} \frac{E^2}{\gamma^2N} \sum_{j\in\Lambda_{N}}\,
\xi_r(j)^2 \, \mf h_j (\eta_r) \, \psi_{t-r}(j)^2 \, dr \;.
\end{equation}
By \eqref{f2.19}, $\psi$ is uniformly bounded in the time interval
$[0,t]$. Since the cylinder functions $\mf h_j$ are also bounded, by Lemma
\ref{sd4}, we may replace $\xi_r(j)^2$ by $\lambda_r(j)^2$ in the
previous formula paying the price of an error which converges to $0$
in $L^1(\bb P_{\mu_N})$.

For two functions $f$, $g:\Lambda_N \to \bb R$, and $1\le \ell\le
N/2$, since $b^2-a^2 = (b-a)(b+a)$,
\begin{equation*}
\frac 1N \sum_{j=\ell}^{N-1-\ell} \frac 1{2\ell + 1} \sum_{k=-\ell}^{\ell} [f(j+k)^2 -
f(j)^2] \, g(j)
\;\le\; \frac{4 \ell \Vert f \Vert_M \, \Vert g \Vert_M}{N}
\sum_{j=0}^{N-2} | f(j+1) - f(j)|\;.
\end{equation*}
Applying this identity to $\ell = \epsilon N$, $f=\lambda_r
\psi_{t-r}$ and $g(j) = \mf h_j$, by Lemma \ref{s06}, we may replace
in the quadratic variation of $M^N_s(t,G)$ the term $\lambda_r(j)^2
\psi_{t-r}(j)^2$ by an average of these quantities over a macroscopic
interval of length $\epsilon N$, paying the price of an error which
vanishes in $L^1(\bb P_{\mu_N})$, as $N\uparrow\infty$ and then
$\epsilon \downarrow 0$. A summation by parts yields that
\begin{equation*}
\<M^N(t,G)\>_s \;=\; \int_{0}^{s} \frac{E^2}{\gamma^2N} 
\sum_{j = \epsilon N}^{(1-\epsilon) N}\,
\lambda_r(j)^2 \, \psi_{t-r}(j)^2 \, V_{j, \epsilon N} (\eta_r) \,
dr \;+\; O(\epsilon) \;,
\end{equation*}
where $V_{j, \epsilon N} (\eta) = (2 \epsilon N +1)^{-1} \sum_{|k|\le
  \epsilon N} \mf h_{j+k}(\eta)$. By Lemma \ref{replacement} below, we
may replace $V_{j, \epsilon N} (\eta_r)$ by $2\rho_r(j/N)
[1-\rho_r(j/N)] = 2 \sigma(\rho_r(j/N))$ with an error of the same
type.

Up to this point we proved that
\begin{equation*}
\<M^N(t,G)\>_s \;=\; 2 \int_{0}^{s} \frac{E^2}{\gamma^2N} 
\sum_{j = \epsilon N}^{(1-\epsilon) N}\,
\lambda_r(j)^2 \, \psi_{t-r}(j)^2 \, \sigma(\rho_r(j/N)) \,
dr \;+\; O(\epsilon) \;+\; R_{N,\epsilon}\; ,
\end{equation*}
where $R_{N,\epsilon}$ is an error which vanishes in $L^1(\bb
P_{\mu_N})$, as $N\uparrow\infty$ and then $\epsilon \downarrow
0$. Note that the first term on the right hand side is
deterministic. 

By Lemma \ref{s07}, $\lambda_s$ converges to $K_s$, and, by
\eqref{f2.20}, $\psi_s$ converges to $F_s = \partial_x
G_{t-s}/K_{t-s}$ uniformly in time and space. Since $K_r^2 F_{t-r}^2 =
(\partial_x G_{r})^2$ and since $\gamma$ converges to $E$, the lemma
is proved.
\end{proof}

\smallskip

\begin{lemma}
\label{s2.3}
For each function $G$ in $C^{2}_0([0,1])$ and $t>0$, there exists a
finite constant $C_0$, depending only on $G$ and $t$, such that for
all $N\ge 1$,
\begin{equation*}
\bb E_{\mu_N} \big[ \<M^N(t,G)\>_t^2 \big] \;\le \; C_0\;,
\quad
\bb E_{\mu_N} \big[ \sup_{0\le s\le t} M^N_s(t,G)^4
\big] \;\le \; C_0\;.
\end{equation*}
\end{lemma}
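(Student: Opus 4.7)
The plan is to exploit the explicit formula \eqref{f2.9} for the quadratic variation, together with the uniform bound $\sup_{0\le s\le t}\Vert\psi_s\Vert_M \le C_0$ from Remark \ref{s2.15} (which applies because $G \in C^2_0([0,1])$) and the uniform moment bounds on the Cole--Hopf variables $\xi_r(j)$ from Lemma \ref{s09}.

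For the bound on the second moment of the quadratic variation, I would start from \eqref{f2.9}, using $\mf h_j \le C$ and $|\psi_{t-r}(j)| \le C_0$ to obtain
\begin{equation*}
\<M^N(t,G)\>_t \;\le\; \frac{C}{N}\int_0^t \sum_{j\in\Lambda_N}\xi_r(j)^2 \, dr\;.
\end{equation*}
Squaring and applying the Cauchy--Schwarz inequality first in the time variable and then in the spatial sum,
\begin{equation*}
\<M^N(t,G)\>_t^2 \;\le\; \frac{Ct}{N}\int_0^t \sum_{j\in\Lambda_N} \xi_r(j)^4 \, dr\;.
\end{equation*}
Taking expectations and invoking the uniform fourth moment bound from Lemma \ref{s09} yields the estimate $\bb E_{\mu_N}[\<M^N(t,G)\>_t^2] \le C_0$.

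For the $L^4$ bound on the supremum, Doob's maximal inequality reduces the task to bounding $\bb E_{\mu_N}[M^N_t(t,G)^4]$, and the Burkholder--Davis--Gundy inequality then gives $\bb E_{\mu_N}[M^N_t(t,G)^4] \le C\, \bb E_{\mu_N}[[M^N]_t^2]$, where $[M^N]_t = \sum_{s\le t}(\Delta M^N_s)^2$ denotes the optional quadratic variation. Since $[M^N]_t - \<M^N\>_t$ is a martingale whose $L^2$ norm is governed by the compensator of $\sum_s(\Delta M^N_s)^4$, one obtains
\begin{equation*}
\bb E_{\mu_N}\big[[M^N]_t^2\big] \;\le\; 2\bb E_{\mu_N}\big[\<M^N\>_t^2\big] + 2\bb E_{\mu_N}\Big[\sum_{s\le t}(\Delta M^N_s)^4\Big]\;.
\end{equation*}
The first term is controlled by the previous step. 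For the second, the integrand in \eqref{f2.5} is bounded by $C/\sqrt{N}$ and each jump of $\mc M^N(j)$ is bounded by $\xi_{s-}(j)|e^{\pm\gamma/N}-1| = O(\xi_{s-}(j)/N)$, so that each individual jump of $M^N$ is of order $N^{-3/2}\xi_{s-}(j)$. Since jumps on each bond are compensated at rate $O(N^2)$, the predictable compensator of the fourth-power jump sum is of order $N^{-3}$ after using Lemma \ref{s09}, and hence negligible.

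The only technical nuisance is the careful bookkeeping of the BDG jump-correction: one must check that each individual jump of $M^N$ is sufficiently small to beat the $O(N^3)$ total jump rate appearing in $\bb E[\sum (\Delta M^N)^4]$. Once this is handled, both estimates reduce to elementary Cauchy--Schwarz manipulations combined with the uniform moment control on $\xi_r(j)$ provided by Lemma \ref{s09}, which supplies all the substance.
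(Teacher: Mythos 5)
Your proposal is correct and follows essentially the same route as the paper: it bounds the predictable quadratic variation via \eqref{f2.9}, the uniform bound \eqref{f2.19} on $\psi$ and the moment bounds of Lemma \ref{s09}, and then controls the fourth moment of the supremum by a Burkholder--Davis--Gundy inequality with a jump correction. The only difference is in the bookkeeping of that correction: the paper invokes the version of BDG from \cite[Lemma 3]{dg}, which adds the fourth moment of the largest jump $\sup_s|M^N_s-M^N_{s-}|$ (bounded by $C_0N^{-3/2}\sum_j\xi_s(j)$ and Lemma \ref{s09}), whereas you pass through the optional quadratic variation and the compensator of $\sum_{s\le t}(\Delta M^N_s)^4$; both correction terms are negligible, so either route closes the argument.
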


\begin{proof}
  We first estimate the quadratic variation $\<M^N(t,G)\>_s $, given
  by \eqref{f2.9}. By \eqref{f2.19}, the solution $\psi_s$ of equation
  \eqref{f2.10} is uniformly bounded.  As the cylinder function $\mf
  h_j$ is also bounded, $\<M^N(t,G)\>_s $ is less than or equal to
\begin{equation*}
C_0 \int_{0}^{s} \frac{1}{N} \sum_{j\in\Lambda_{N}}\, \xi_r(j)^2 \, dr \;.
\end{equation*}
The first assertion of the lemma follows therefore from Lemma
\ref{s09} with $n=2$.

We turn to the second assertion of the lemma. By the
Burkholder-Davis-Gundy inequality and by \cite[Lemma 3]{dg}, the second
expectation appearing in the statement of the lemma is bounded above
by 
\begin{equation*}
C_0 \Big\{ \bb E_{\mu_N} \big[ \<M^N(t,G)\>_t^2 \big] \;+\;
\bb E_{\mu_N} \big[ \sup_{0\le s\le t} |M^N_s(t,G)-M^N_{s-}(t,G)|^4
\big]\Big\} 
\end{equation*}
for some finite constant $C_0$.  In view of the first part of the
proof, it remains to estimate the fourth moment of the jumps.
Clearly, $|M^N_s(t,G)-M^N_{s-}(t,G)| = |{J}_{s}^{N} (g_s) -
{J}_{s_{-}}^{N} (g_s)|$. By the definition of $J^N_s$ and of $\psi_s$,
since $|\xi_{s-}(j)/\xi_{s}(j)\big| \le e^{-\gamma/N}$, and since
$\psi_s$ is uniformly bounded, this latter quantity is less than or
equal to
\begin{equation*}
\frac{1}{\gamma \sqrt N} \sum_{j=0}^{N-1} |(\psi_{t-s})(j)|
\, \big| \xi_s(j) -  \xi_{s_{-}}(j) \big| \;\leq\;
\frac{C_0 }{N^{3/2}} \sum_{j=0}^{N-1} \xi_s(j) \;.
\end{equation*}
The second assertion of the lemma follows from Schwarz inequality and
from Lemma \ref{s09}.
\end{proof}

\begin{lemma}
\label{s2.6}
Fix $G$ in $C^5_0([0,1])$ and $t>0$.  The sequence of martingales
$M^N_s(t,G)$ introduced in \eqref{f2.5} converges in $D([0,t], \bb R)$
to a mean-zero, continuous martingale, denoted by $M_s(t,G)$. For
$G_1$, $G_2$ in $C^5_0([0,1])$, $t_1, t_2>0$, and $0\le s_j\le t_j$,
the covariations of $M_{s_1}(t_1,G_1)$ and $M_{s_2}(t_2,G_2)$ are
given by
\begin{equation*}
\bb E [M_{s_1}(t_1,G_1) \, M_{s_2}(t_2,G_2)] \;=\; 
2 \int_{0}^{s_1\wedge s_2} \int_0^1 \sigma(\rho(r,x))\, (\partial_ x
T_{t_1,r} G_1)(x)\, (\partial_ x T_{t_2,r} G_2)(x) \, dx \, dr\;.
\end{equation*}
\end{lemma}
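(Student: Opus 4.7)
The plan is to establish tightness of $\{M^N_\cdot(t,G) : N \ge 1\}$ in $D([0,t], \bb R)$, identify any limit point via the martingale functional central limit theorem, and then extract the cross-covariance by polarization. For tightness, Lemma \ref{s2.3} already provides the uniform bound $\bb E_{\mu_N}[\sup_{0 \le s \le t} M^N_s(t,G)^4] \le C_0$, which takes care of the one-dimensional marginals. Aldous's condition follows from the estimate
\begin{equation*}
\bb E_{\mu_N}\bigl[\<M^N(t,G)\>_{\tau+\delta} - \<M^N(t,G)\>_{\tau}\bigr] \;\le\; C_0\, \delta\;,
\end{equation*}
valid for any stopping time $\tau \le t-\delta$, which is itself an immediate consequence of the explicit form \eqref{f2.9} of the quadratic variation, the uniform bound \eqref{f2.19} on $\psi_s$, and the second moment estimates on $\xi_r$ supplied by Lemma \ref{s09}.

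Next I would identify any limit point using the martingale functional central limit theorem. Two inputs are required: convergence in probability of $\<M^N(t,G)\>_s$ to a deterministic continuous function of $s$, which is precisely the content of Lemma \ref{s2.5}; and negligibility of the jumps. For the latter, the computation carried out in the proof of Lemma \ref{s2.3} yielded $|M^N_s(t,G) - M^N_{s-}(t,G)| \le C_0\, N^{-3/2} \sum_j \xi_s(j)$, and Schwarz's inequality combined with Lemma \ref{s09} gives $\bb E_{\mu_N}[\sup_{s \le t} |\Delta M^N_s|^2] = O(N^{-1})$. The limit is therefore a continuous mean-zero martingale with deterministic quadratic variation equal to the integral in Lemma \ref{s2.5}; being a time-changed Brownian motion, it is automatically Gaussian.

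For the cross-covariance, assume $s_1 \le s_2$. The martingale property applied to $s \mapsto M_s(t_2, G_2)$ reduces the problem to computing $\bb E[M_{s_1}(t_1, G_1)\, M_{s_1}(t_2, G_2)]$. Since $s_1 \le t_1 \wedge t_2$, the sum $M^N_\cdot(t_1, G_1) + M^N_\cdot(t_2, G_2)$ is a well-defined c\`adl\`ag martingale on $[0, s_1]$ whose integrand in the representation \eqref{f2.5} is the sum of the two individual integrands. Repeating the argument of Lemma \ref{s2.5} with $[\psi^{(1)}_{t_1-r} + \psi^{(2)}_{t_2-r}]^2$ in place of $\psi_{t-r}^2$ shows that its quadratic variation at time $s_1$ converges to
\begin{equation*}
2 \int_0^{s_1} \!\int_0^1 \sigma(\rho(r,x))\, \bigl[(\partial_x T_{t_1,r} G_1)(x) + (\partial_x T_{t_2,r} G_2)(x)\bigr]^2\, dx\, dr\;;
\end{equation*}
expanding the square, subtracting the two individual variances supplied by Lemma \ref{s2.5}, and halving yields the desired formula, using that $s_1 = s_1 \wedge s_2$. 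The main obstacle is mild: one must verify that the replacement step of Lemma \ref{s2.5}, together with the uniform convergences $\lambda_r \to K_r$ (Lemma \ref{s07}) and $\psi^{(i)} \to \partial_x T_{t_i,r} G_i / K_r$ (equation \eqref{f2.20}), transfer cleanly to the cross term $\psi^{(1)}_{t_1-r}\, \psi^{(2)}_{t_2-r}$, which they do since all the relevant bounds are $L^\infty$ and no new estimate is required.
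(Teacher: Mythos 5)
Your proposal is correct and follows essentially the same route as the paper: the paper also identifies the limit via a martingale functional CLT (it invokes Theorem VIII.3.12 of Jacod--Shiryaev, feeding in the $L^1$-convergence of the quadratic variation from Lemma \ref{s2.5} and the vanishing of the jumps established in Lemma \ref{s2.3}), and it computes the cross-covariance exactly as you do, by polarization applied to the martingale $M^N_s(t_1,G_1)\pm M^N_s(t_2,G_2)$ together with the uniform $L^4$ bound to pass the second moments to the limit. The only point worth making explicit in your write-up is that last uniform-integrability step, which you have available from the fourth-moment bound of Lemma \ref{s2.3} but do not state when transferring the prelimit covariance to the limit martingales.
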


\begin{proof}
The proof of the convergence in $D([0,t], \bb R)$ of the martingales
$M^N_s(t,G)$ to a mean-zero, continuous martingale, whose quadratic
variation is given by the right hand side of the displayed equation
appearing in the statement of the lemma with $G_j=G$ and $t_j=t$,
relies on \cite[Theorem VIII.3.12]{js}. We claim
that conditions (3.14) and b-(iv) are fulfilled. 
Condition [$\gamma_5$-D] (defined in 3.3 page 470 of \cite{js})
follows from Lemma \ref{s2.5}. By Assertion VIII.3.5 in \cite{js},
condition [$\hat\delta_5$-D] and condition (3.14) are a consequence of 
\begin{equation*}
\lim_{N\to\infty} \bb E_{\mu_N} \Big[ \sup_{s\le t} \big| M^N_s(t,G) -
M^N_{s-}(t,G)\big| \Big]\;=\; 0\;,
\end{equation*}
an assertion which has been proved in the previous lemma.

It remains to prove the formula for the covariances. Fix $G_1$, $G_2$
in $C^5_0([0,1])$, $t_1, t_2>0$, $0\le s_j\le t_j$, and let $s=s_1
\wedge s_2$. Since $M^N_s(t_j,G_j)$, $0\le s\le t_j$, are martingales
in $L^2(\bb P_{\mu_N})$, $\bb E_{\mu_N} [ M^N_{s_1}(t_1,G_1)\,
M^N_{s_2}(t_2,G_2) ] = \bb E_{\mu_N} [ M^N_{s}(t_1,G_1)\,
M^N_{s}(t_2,G_2)]$. By the polarization identity, the computation of
the covariance is reduced to the computation of the variance of the
martingales $M^N_{s}(t_1,G_1)\pm M^N_{s}(t_2,G_2)$. In view of
\eqref{f2.5}, the martingale $M^N_{s}(t_1,G_1)\pm M^N_{s}(t_2,G_2)$
can be represented as a martingale $M^N_{s}(t_1, t_2,G_1,G_2)$. The
proof of Lemma \ref{s2.5} shows that the quadratic variation of this
martingale converges in $L^1(\bb P_{\mu_N})$ to
\begin{equation}
\label{f2.13}
2 \int_{0}^{s} \int_0^1 \sigma(\rho(r,x))\, [(\partial_ x
T_{t_1,r} G_1 \pm T_{t_2,r} G_2)(x)]^2 \, dx \, dr\;.
\end{equation}
By the first part of the proof, the martingale $M^N_{s}(t_1,G_1)\pm
M^N_{s}(t_2,G_2)$ converges in distribution to the martingale
$M_{s}(t_1,G_1)\pm M_{s}(t_2,G_2)$.  As the limit is continuous,
the convergence in the Skorohod topology entails convergence in
distribution at fixed times.  Since, by Lemma \ref{s2.3},
$M^N_{s}(t_1,G_1)\pm M^N_{s}(t_2,G_2)$ is bounded in $L^4(\bb
P_{\mu_N})$,
\begin{equation*}
\bb E\Big[ \big\{ M_{s}(t_1,G_1)\pm M_{s}(t_2,G_2)\big\}^2 \Big]
\;=\; \lim_{N\to\infty} 
\bb E_{\mu_N}\Big[ \big\{ M^N_{s}(t_1,G_1)\pm M^N_{s}(t_2,G_2)\big\}^2 \Big]
\end{equation*}
which completes the proof of the lemma since the right hand side
converges to \eqref{f2.13}.
\end{proof}

We conclude this section stating a result which permits to replace
cylinder functions by functions of the empirical measure. Denote by
$\nu_{\rho}$, $0\le \rho\le 1$, the Bernoulli product measure on
$\{0,1\}^{\bb Z}$ with density $\rho$. For a function $\mf h: \{0,1\}^{\bb
  Z} \rightarrow{\mathbb{R}}$ which depends only on a finite number of
sites, let $\widehat{\mf h}(\rho)=E_{\nu_{\rho}}[\mf h(\eta)]$. Denote by $\tau_j
\eta$, $j\in \bb Z$, $\eta\in \{0,1\}^{\bb Z}$, the configuration
$\eta$ translated by $j$: $(\tau_j \eta)(k) = \eta(j+k)$, $k\in \bb
Z$.  For a cylinder function $\mf h$, whose support is represented by
$\Lambda \subset \bb Z$, and for a configuration $\eta\in\Sigma_N$ the
meaning of $\mf h(\tau_j\eta)$ is clear provided $j+\Lambda \subset \{1,
\dots, N\}$.

\begin{lemma}
\label{replacement}
Let $\{\mu_N : N\ge 1\}$ be a sequence of probability measures in
$\Sigma_N$.  For every continuous function
$G:\mathbb{R}_+\times[0,1]\rightarrow{\bb{R}}$ and every cylinder
function $\mf h$,
\begin{equation*}
\limsup_{N\rightarrow{+\infty}}\mathbb{E}_{\mu_N}\Big[\int_{0}^t\Big|
\frac{1}{N}\sum_{j}
G(s,j/N) \, \mf h(\tau_j\eta_s) \;-\; \int_{0}^{1} G(s,x) 
\, \widehat{\mf h} (\rho(s,x))\, dx\Big| \, ds\Big]\;=\; 0\;,
\end{equation*}
where $\rho(s,x)$ is the solution of the hydrodynamic equation
\eqref{vBe} and where the sum over $j$ is carried over all $j$'s for
which the support of $\mf h$ is contained in $\Sigma_N-j$.
\end{lemma}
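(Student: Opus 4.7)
The plan is to follow the classical one-block/two-block replacement scheme of Guo-Papanicolaou-Varadhan, adapted to the boundary-driven setting as in \cite{kl}. The idea is to bridge the microscopic cylinder function $\mf h(\tau_j\eta_s)$ and the macroscopic density $\rho(s,x)$ through two intermediate levels of spatial averaging: first an average over a microscopic box of fixed size $2\ell+1$, then an average over a macroscopic box of size $2\epsilon N+1$. The hydrodynamic limit stated in Section \ref{HL} is then invoked to close the argument.

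For the first step (one-block estimate), I would fix $\ell\ge 1$, introduce the empirical density $\eta^\ell(j)=(2\ell+1)^{-1}\sum_{|k|\le\ell}\eta(j+k)$, and show
\begin{equation*}
\lim_{\ell\to\infty}\limsup_{N\to\infty}\mathbb{E}_{\mu_N}\Big[\int_0^t\frac{1}{N}\sum_j|G(s,j/N)|\,\big|\mf h(\tau_j\eta_s)-\widehat{\mf h}(\eta^\ell_s(j))\big|\,ds\Big]\;=\;0.
\end{equation*}
The standard route writes $\mathcal{L}_N=\mathcal{L}_N^s+\mathcal{L}_N^a$, where $\mathcal{L}_N^s$ is the symmetric SSEP part of order $N^2$ and $\mathcal{L}_N^a$ is the weakly asymmetric correction of order $N$. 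The entropy inequality, together with the a priori bound on the Dirichlet form $\int_0^t D_N^s(\sqrt{f_s^N})\,ds = O(N^{-1})$ obtained by differentiating the relative entropy $H(\mu_N S_s^N | \nu_*^N)$ with respect to a slowly varying reference Bernoulli measure $\nu_*^N$ matching the boundary profile, reduces the problem to an ergodic-type statement for the symmetric dynamics restricted to a box of size $2\ell+1$ with fixed particle number; equivalence of ensembles then yields the claim.

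For the second step (two-block estimate), I would likewise replace $\widehat{\mf h}(\eta^\ell_s(j))$ by $\widehat{\mf h}(\eta^{\epsilon N}_s(j))$, again using the symmetric Dirichlet form bound and the Lipschitz character of $\widehat{\mf h}$, which reduces the task to the classical comparison of $\eta^\ell_s(j)$ with $\eta^{\epsilon N}_s(j)$. Finally, the hydrodynamic limit (convergence in $\mathbb{P}_{\mu_N}$-probability of $\pi_s^N$ to the weak solution of \eqref{vBe}), combined with the uniform continuity of $G$ and of $\widehat{\mf h}\circ\rho$ on $[0,t]\times[0,1]$ and a standard truncation to bypass the boundary terms, gives
\begin{equation*}
\lim_{\epsilon\downarrow 0}\limsup_{N\to\infty}\mathbb{E}_{\mu_N}\Big[\int_0^t\Big|\frac{1}{N}\sum_j G(s,j/N)\,\widehat{\mf h}(\eta^{\epsilon N}_s(j))-\int_0^1 G(s,x)\,\widehat{\mf h}(\rho(s,x))\,dx\Big|ds\Big]=0,
\end{equation*}
and the three limits assemble in the order $N\to\infty$, $\ell\to\infty$, $\epsilon\to 0$.

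The main obstacle is to justify the entropy and Dirichlet-form bounds under the mild hypothesis \eqref{08}, which does not by itself imply $H(\mu_N|\nu_*^N)=O(N)$. The standard remedy is to absorb the entropy of the initial measure into a time-smoothing: working with $\mu_N S_\delta^N$ for small $\delta>0$ one gets an entropy bound of order $N$ by the log-Sobolev or hypercontractivity type estimate for the symmetric part of $\mathcal{L}_N$, and one then removes the $\delta$-smoothing by stationarity arguments and Lemma \ref{sd4}-type moment controls on $\xi_s$. Handling the antisymmetric part requires checking that its contribution to the entropy production is of lower order, which is where the factor $E/N$ in the rates becomes essential and where the argument genuinely uses the weakness of the asymmetry.
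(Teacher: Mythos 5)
Your overall strategy is exactly the one the paper intends: the paper gives no detailed proof of Lemma \ref{replacement} and simply states that it follows the standard one-block/two-block scheme of \cite{kl}, combined with the Dirichlet form estimate of \cite[Lemma 3.1]{blm1} for the boundary driven weakly asymmetric dynamics. Your first two steps and the final passage to the hydrodynamic limit are the right skeleton.

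The one place where you go astray is the last paragraph. There is no obstacle of the kind you describe: the lemma is stated for an \emph{arbitrary} sequence of probability measures on $\Sigma_N$, and hypothesis \eqref{08} plays no role here, because on the finite state space $\Sigma_N=\{0,1\}^{\{1,\dots,N-1\}}$ the relative entropy of any measure with respect to a Bernoulli product reference measure $\nu^N_*$ with marginals bounded away from $0$ and $1$ satisfies
\begin{equation*}
H(\mu_N\,|\,\nu^N_*)\;\le\;\max_{\eta\in\Sigma_N}\log\frac{1}{\nu^N_*(\eta)}\;\le\;C\,N
\end{equation*}
automatically. The entropy production inequality (this is the content of \cite[Lemma 3.1]{blm1}) then yields $\int_0^t D_N^s(\sqrt{f^N_s})\,ds=O(N^{-1})$ directly, the $O(N)$ error coming from the non-invariance of $\nu^N_*$ (boundary terms and the weak asymmetry, which is where the factor $E/N$ matters, as you correctly note). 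Consequently the time-smoothing with $\mu_N S^N_\delta$, the hypercontractivity step, and the ``stationarity arguments'' you invoke to remove the smoothing are all unnecessary -- and the last of these would in any case be unavailable, since the process is not in equilibrium. With that paragraph deleted and replaced by the trivial entropy bound, your proof is the paper's proof.
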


The proof of this result is similar to the one presented in \cite{kl},
given the estimate presented in \cite[Lemma 3.1]{blm1}.

\section{Tightness of the density field}
\label{sec2}

We prove in this section that the sequence $\{{Y}_t^N : N\ge 1\}$ is
tight in $D(\bb R_+, \mc H_{-k})$ for $k>7/2$. Recall from Section
\ref{neq} the definition of the eigenfunctions $\{e_n : n\geq 1\}$
and of the eigenvalues $\{\lambda_n : n\geq 1\}$ of the operator
$-\Delta$ defined on $C^2_0([0,1])$. Denote by $\Vert
\,\cdot\,\Vert_{-k}$ the norm of $\mc H_{-k}$, defined as
\begin{equation*}
\| f \|^2_{-k} \;=\; \sum_{n\geq 1} \lambda_n^{-2k} \, \<f,e_n\>^2\;. 
\end{equation*}

By Propositions \ref{s13}, \ref{s10}, and by \eqref{r1}, to prove that
the sequence $\{{Y}_t^N : N\ge 1\}$ is tight it is enough to show that
the sequence $\{J_t^N : N\ge 1\}$ is tight: We claim that for every
$k>7/2$, $T>0$, $\epsilon>0$,
\begin{equation*}
\lim_{A\rightarrow{\infty}}\limsup_{N\rightarrow{\infty}}
\mathbb{P}_{\mu_N}\Big[\sup_{0\leq{t}\leq{T}}\|{{J}}_{t}^{N}\|_{-k}>A\Big]=0
\;, 
\quad \lim_{\delta\rightarrow{0}}\limsup_{N\rightarrow{\infty}}
\mathbb{P}_{\mu_N}\Big[\omega_{\delta}({{J}}_{t}^{N})\geq{\epsilon}\Big]=0\;, 
\end{equation*}
where, for $\delta>0$,
\begin{equation*}
\omega_{\delta}({J}_{t}^{N}) \;=\; \sup_{\substack{|s-t|<\delta\\0\leq{s,t}\leq{T}}}
\|{{J}}_{t}^{N}-{{J}}_{s}^{N}\|_{-k}\;. 
\end{equation*}
The first condition in the penultimate displayed equation is a
consequence of part (a) of Corollary \ref{s2.11}. The second condition
follows from part (b) of that corollary and from Lemma \ref{s2.12}. 

\begin{lemma} 
\label{th:lemmatight}
There exists a finite constant $C_0$, such that for every $n\geq{1}$,
\begin{equation*}
\mathbb{E}_{\mu_N}\Big[\sup_{0\leq{t}\leq{T}}\Big\<{J}_{t}^{N},
\frac{1}{\gamma\lambda_t}\nabla_N^+ e_n\Big\>^{2}\Big]\;\leq\;
C_0\, n^6\; \cdot
\end{equation*}
\end{lemma}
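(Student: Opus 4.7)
The pairing in the statement unwinds to $\<J^N_t, (\gamma\lambda_t)^{-1}\nabla_N^+ e_n\> = J^N_t(e_n)$, so the goal is to bound $\bb E_{\mu_N}[\sup_{0 \le t \le T} J^N_t(e_n)^2]$ by $C_0 n^6$. My plan is to apply the decomposition \eqref{f2.14} to the time-independent test function $g(j) = e_n(j/N)$ and then control the martingale, drift and initial terms separately. The key structural observation is that $e_n$ vanishes at the boundary, so by definition of $\ms A_r$ we have $(\ms A_r e_n)(0) = (\ms A_r e_n)(N) = 0$; hence the telescoping sum $\sum_{j=0}^{N-1} [\nabla^+_N (\ms A_r e_n)](j) = N[(\ms A_r e_n)(N) - (\ms A_r e_n)(0)] = 0$.

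This telescoping identity allows replacing $\xi_r$ by $\xi_r - \lambda_r$ in the drift of \eqref{f2.14}, yielding
\begin{equation*}
J^N_s(e_n) \;=\; J^N_0(e_n) \;+\; M^N_s \;+\; \int_0^s \tilde{D}_r\, dr\;,
\end{equation*}
with $M^N_s = \frac{1}{\sqrt{N}} \sum_j \int_0^s \frac{(\nabla^+ e_n)(j)}{\gamma \lambda_r(j)}\, d\mc M^N_r(j)$ and $\tilde{D}_r = \frac{1}{\gamma \sqrt{N}} \sum_j \frac{[\nabla^+ (\ms A_r e_n)](j)}{\lambda_r(j)}\, [\xi_r(j) - \lambda_r(j)]$. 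I would then estimate the three pieces in turn. For the initial value, Cauchy--Schwarz together with $\Vert \nabla^+ e_n\Vert_M \leq Cn$ and the uniform positive lower bound on $\lambda_0$ provided by Lemma \ref{s08} gives $|J^N_0(e_n)|^2 \leq C n^2 \sum_j [\xi_0(j) - \lambda_0(j)]^2$; linearizing the exponential in the definition of $\xi_0$ and invoking the moment hypothesis \eqref{08} yields $\bb E_{\mu_N}[\sum_j(\xi_0(j) - \lambda_0(j))^2] \leq C$, so $\bb E_{\mu_N}[J^N_0(e_n)^2] \leq C n^2$. For the martingale, the orthogonality in \eqref{VQuad} gives $\<M^N\>_s \leq C n^2 \int_0^s N^{-1}\sum_j \xi_r(j)^2\, dr$; Lemma \ref{s09} combined with Doob's $L^2$ inequality then yields $\bb E_{\mu_N}[\sup_s (M^N_s)^2] \leq C n^2$.

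The main obstacle is the drift. The bound $\Vert \nabla^+(\ms A_r e_n)\Vert_M \leq Cn^3$, a direct consequence of $\ms A_r e_n$ being close to $-\lambda_n e_n$ and made rigorous via the discrete product rule applied to the definition of $\ms A_r$ together with Lemma \ref{s2.10}, combined with the uniform positivity of $\lambda_r$ and Cauchy--Schwarz in $j$, gives $\tilde{D}_r^2 \leq C n^6 \sum_j [\xi_r(j) - \lambda_r(j)]^2$. Cauchy--Schwarz in time then yields $\sup_{0 \le s \le T} (\int_0^s \tilde{D}_r\, dr)^2 \leq T \int_0^T \tilde{D}_r^2\, dr$, hence
\begin{equation*}
\bb E_{\mu_N}\Big[\sup_{0 \le s \le T} \Big(\int_0^s \tilde{D}_r\, dr\Big)^2\Big] \;\leq\; C n^6 \sup_{0\le r \le T} \bb E_{\mu_N}\Big[\sum_j (\xi_r(j) - \lambda_r(j))^2\Big]\;.
\end{equation*}
The remaining task is to establish the uniform bound $\sup_{N,\,r \le T} \bb E_{\mu_N}[\sum_j (\xi_r(j) - \lambda_r(j))^2] \leq C$, which should follow from Lemma \ref{sd4}: the squared $\ell^2$ norm of $\xi_r - \lambda_r$ starts at order $1$ by the computation just used for the initial term, and by the same manipulation that produced \eqref{06} in the proof of Lemma \ref{s11} its time derivative is bounded by a multiple of itself plus terms with vanishing expectation, so a Gronwall argument preserves this order. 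Combining the three contributions via $(a+b+c)^2 \leq 3(a^2+b^2+c^2)$ then produces $\bb E_{\mu_N}[\sup_s J^N_s(e_n)^2] \leq C(n^2 + n^2 + n^6) = C n^6$. Without the telescoping cancellation the drift would be controlled by $\sum_j \xi_r(j)^2$, which is of order $N$ rather than order $1$, costing a fatal factor of $N^{1/2}$; this is the subtle but decisive point on which the argument hinges.
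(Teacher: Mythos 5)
Your decomposition is the same as the paper's (initial term, drift, martingale, via \eqref{f2.14}), your telescoping observation is exactly the step the paper uses implicitly when it writes the drift as $J^N_s(\ms A_s e_n)$, and your treatment of the initial and martingale terms is correct. But there is a genuine gap in the drift estimate. You claim $\Vert \nabla_N^+(\ms A_r e_n)\Vert_M \le C n^3$. This fails at the two boundary sites: since $(\ms A_r e_n)(0)=(\ms A_r e_n)(N)=0$ by definition of the operator, while $(\ms A_r e_n)(1)$ contains the first-order term $E[1-\theta(1)][1+(E/N)\theta(1)]^{-1}(\nabla_N^+ e_n)(1)$, which is of order $n$ and does not vanish as $N\to\infty$, one has $[\nabla_N^+(\ms A_r e_n)](0) = N(\ms A_r e_n)(1)$, which is of order $nN$ (the paper records the safe bound $C_0 n^2 N$ for $j=0,N-1$; only the interior gradients are $O(n^3)$). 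Feeding this into your pathwise Cauchy--Schwarz in $j$, the prefactor $\gamma^{-2}N^{-1}\sum_j [\nabla_N^+(\ms A_r e_n)](j)^2\lambda_r(j)^{-2}$ picks up a contribution $N^{-1}(n^2N)^2 = n^4 N$ from the two boundary sites, so your bound $\tilde D_r^2 \le C n^6 \sum_j(\xi_r(j)-\lambda_r(j))^2$ is false, and the argument as written loses a factor of $N$ rather than a factor of $N^{1/2}$ as you feared.

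The gap is fixable in two ways. The paper's route: after the Cauchy--Schwarz in time, expand the square of the drift as a double sum over $j,k$, take the expectation inside, and use $\sup_{s,j,k}|\bb E_{\mu_N}[(\xi_s(j)-\lambda_s(j))(\xi_s(k)-\lambda_s(k))]|\le C_0/N$ (from Lemma \ref{sd4} and Cauchy--Schwarz); then the interior pairs give $N^{-1}\cdot N^2\cdot n^6\cdot N^{-1}=n^6$ and the finitely many boundary pairs give at most $N^{-1}\cdot N\cdot(n^2N)\cdot n^3\cdot N^{-1}=n^5$. Alternatively, within your scheme, split off the terms $j=0$ and $j=N-1$ before applying Cauchy--Schwarz in $j$ and bound them directly: their contribution to $\bb E[\tilde D_r^2]$ is at most $C N^{-1}(n^2N)^2\,\bb E[(\xi_r(0)-\lambda_r(0))^2+(\xi_r(N-1)-\lambda_r(N-1))^2]\le Cn^4$, using $\bb E[(\xi_r(j)-\lambda_r(j))^2]\le C/N$. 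Either repair restores the $Cn^6$ bound; the rest of your proof (including the reduction of $\bb E[\sum_j(\xi_r(j)-\lambda_r(j))^2]\le C$ to Lemma \ref{sd4}, where no Gronwall argument is actually needed) is sound.
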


\begin{proof}
By \eqref{f2.7} and \eqref{f2.14}, 
\begin{equation*}
{J}_{t}^{N}(e_n) \;=\; {J}_{0}^{N}(e_n) \;+\;
\int_{0}^{t}{J}_{s}^{N}(\ms A_{s} e_n) \, ds \;+\; 
\mathcal{M}^N_{t}(e_n)\;, 
\end{equation*}
where $\mathcal{M}^N_{t}(e_n) $ is the martingale appearing on the right
hand side of \eqref{f2.14} with $g=e_n$. We estimate separately each
term of the previous expression. By Schwarz inequality,
\begin{equation*}
\mathbb{E}_{\mu_N}\big[{J}_{0}^{N}( e_n)^2\big] \;\le\;
\frac{1}{\gamma^2} \sum_{j=0}^{N-1}
\frac{(\nabla_N^+ e_n)(j/N)^2}{\lambda_0(j)^2} \,
\mathbb{E}_{\mu_N} \big[\{\xi_0(j)-\lambda_0(j)\}^2 \big]\;.
\end{equation*}
By assumption \eqref{08}, the expectation is bounded by $C_0/N$.
Hence, since $\lambda_0$ is bounded below by a strictly positive
constant, the previous sum is less than or equal to $C_0 n^2$.

We turn to the time integral term in the decomposition of
$J_{t}^{N}(e_n)$. By Schwarz inequality, and by the definition of
$J^N_t$,
\begin{equation*}
\begin{split}
\mathbb{E}_{\mu_N}\Big[\sup_{0\leq{t}\leq{T}} & \Big(\int_{0}^t{J}_{s}^{N}(\ms
A_s^Ne_n)\,ds\Big)^2\Big] \\
 &\le\; T \int_0^T  \frac{1}{\gamma^2 N}\sum_{j=0}^{N-1}
\frac{[\nabla_N^+ (\ms A_s e_n)](j)^2}{\lambda_s(j)^2} \,
\varphi_s(j,j) \, ds \\
& +\; T \int_0^T \frac{1}{\gamma^2  N}\sum_{j\not = k} 
\frac{[\nabla_N^+ (\ms A_s e_n)](j) \, [\nabla_N^+ (\ms A_s
  e_n)](k)} {\lambda_s(j)\lambda_s(k)} \, \varphi_s(j,k)\, ds\;,
\end{split}
\end{equation*}
where $\varphi_s(j,j) = \mathbb{E}_{\mu_N}
[\{\xi_s(j)-\lambda_s(j)\}^2 ]$, $\varphi_s(j,k) = \mathbb{E}_{\mu_N}
[\{\xi_s(j)-\lambda_s(j)\}\, \{\xi_s(k)-\lambda_s(k)\} ]$. Recall that
$\lambda_s(j)$ is bounded below by a strictly positive constant. By
Lemma \ref{sd4}, $\sup_{0\le s\le T} \max_{j,k} |\varphi_s(j,k)| \le
C_0/N$. On the other hand, in view of Lemma \ref{s2.10}, by a Taylor
expansion and since $(\ms A_s e_n)(0) = (\ms A_s e_n)(N)=0$,
\begin{equation*}
\begin{split}
& \sup_{0\le s\le T} \max_{1\le j\le N-2} \big|\, [\nabla_N^+ (\ms A_s
e_n)](j) \, \big| \;\le\; C_0 n^3\;, \\
& \quad \sup_{0\le s\le T} \, \max_{k=0, N-1} \big|\, [\nabla_N^+ (\ms A_s
e_n)](k) \, \big| \;\le\; C_0 n^2 N \;.
\end{split}  
\end{equation*}
It follows from these bounds that the penultimate displayed equation
is bounded by $C_0 n^6$.

It remains to examine the martingale term in the decomposition of
$J_{t}^{N}(e_n)$. By the definition \eqref{f2.14} of the martingale
$\mathcal{M}^N_{t}(e_n)$, by Doob's inequality, and by \eqref{VQuad},
\begin{equation*}
\mathbb{E}_{\mu_N}\Big[\sup_{0\leq{t}\leq{T}}
\mathcal{M}^N_{t}(e_n)^{2}\Big] 
\; \le\; \mathbb{E}_{\mu_N} \Big[ 
\int_0^T  \frac{4E^2}{\gamma^2 N}\sum_{j=0}^{N-1}
\frac{(\nabla_N^+ e_n)(j)^2}{\lambda_s(j)^2} \, 
\xi_s(j)^2 \, \mf h_j(\eta_s) \, ds \Big]\;.
\end{equation*}
Since the cylinder functions $\mf h_j$ are bounded and since, by Lemma
\ref{s08}, $\lambda_s$ is uniformly bounded below, by Lemma \ref{s09}
this expression is less than or equal to $C_0 n^2$.  This completes
the proof of the lemma.
\end{proof}

\begin{corollary}
\label{s2.11}
For each $k>7/2$
\begin{equation*}
\begin{split}
&(a)\quad \limsup_{N\rightarrow{+\infty}}
\mathbb{E}_{\mu_N}\Big[\sup_{0\leq{t}\leq{T}}\|{{J}}^N_{t}\|_{-k}^{2}\Big]
<{\infty}\\
&(b)\quad
\lim_{m\rightarrow{+\infty}}\limsup_{N\rightarrow{+\infty}}
\mathbb{E}_{\mu_N}\Big[\sup_{0\leq{t}\leq{T}}\sum_{n\geq{m}}
\<{{J}}^N_{t},e_n\>^{2}\lambda_{n}^{-2k}\Big]=0.
\end{split}
\end{equation*}
\end{corollary}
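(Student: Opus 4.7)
The plan is to deduce both parts of the corollary directly from the per-mode estimate of Lemma \ref{th:lemmatight} by expanding the norm $\|\cdot\|_{-k}$ in the basis $\{e_n\}$ and exchanging the supremum over $t$ with the sum over $n$.

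The first step is to identify the quantity controlled by Lemma \ref{th:lemmatight} with $\<J^N_t,e_n\>$. By the definition of the current field $J^N_t$ given just before Proposition \ref{s10}, the weight $(\gamma\lambda_t)^{-1}\nabla_N^+e_n$ is precisely what one pairs with $(\xi_t-\lambda_t)/\sqrt{N}$ to recover $J^N_t(e_n)$, so Lemma \ref{th:lemmatight} reads $\mathbb{E}_{\mu_N}[\sup_{0\le t\le T}\<J^N_t,e_n\>^2]\le C_0\, n^6$ for every $n\ge 1$. Then, since every summand $\lambda_n^{-2k}\<J^N_t,e_n\>^2$ is nonnegative, the pointwise bound $\sup_t\sum_n\le\sum_n\sup_t$ combined with Fubini--Tonelli yields
\begin{equation*}
\mathbb{E}_{\mu_N}\Big[\sup_{0\le t\le T}\|J^N_t\|_{-k}^2\Big]\;\le\;\sum_{n\ge 1}\lambda_n^{-2k}\,\mathbb{E}_{\mu_N}\Big[\sup_{0\le t\le T}\<J^N_t,e_n\>^2\Big]\;\le\;C_0\,\pi^{-4k}\sum_{n\ge 1}n^{6-4k}\;,
\end{equation*}
where I used $\lambda_n=(n\pi)^2$. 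The series converges provided $4k-6>1$, amply satisfied for $k>7/2$, which gives part (a) with an $N$-independent bound. For part (b), restricting the same estimate to indices $n\ge m$ produces the tail of a convergent series, which tends to $0$ as $m\to\infty$, uniformly in $N$.

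There is no real analytic obstacle: Lemma \ref{th:lemmatight} has been designed precisely so that its polynomial growth rate $n^6$ per Fourier mode is overwhelmed by the weight $\lambda_n^{-2k}$ of $\mc H_{-k}$. The only point needing care is the direction of the sup--sum exchange, which is justified by nonnegativity of the summands; the value $k>7/2$ is more than enough to close the argument (in fact $k>7/4$ would suffice at this step alone).
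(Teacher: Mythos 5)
Your proof is correct and coincides with the paper's own argument: the corollary is deduced from Lemma \ref{th:lemmatight} together with the observation that $\sup_{0\le t\le T}\|J^N_t\|_{-k}^2 \le \sum_{n\ge 1}\lambda_n^{-2k}\,\sup_{0\le t\le T}|J^N_t(e_n)|^2$, both parts then reducing to (tails of) the convergent series $\sum_n n^{6-4k}$. Your side remark that $k>7/4$ already suffices is accurate for the weight $\lambda_n^{-2k}=(n\pi)^{-4k}$ as written in the corollary; note that with the weight $(n\pi)^{-2k}$ appearing in the definition of $\mathcal{H}_{-k}$ in Section 2, the threshold $k>7/2$ is exactly what this computation requires.
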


\begin{proof}
This result is a consequence of the previous lemma and of the observation
that 
\begin{equation*}
\sup_{0\leq{t}\leq{T}}\|{{J}}^N_{t}\|_{-k}^{2} \;\le\;
\sum_{n\geq 1}\lambda_{n}^{-2k} \, \sup_{0\leq{t}\leq{T}} 
\big| {J}^N_{t}(e_n) \big|^{2}\;.
\end{equation*}
\end{proof}

\begin{lemma}
\label{s2.12}
For every $n\ge 1$ and every $\epsilon>{0}$,
\begin{equation*}
\lim_{\delta\rightarrow{0}}\limsup_{N\rightarrow{+\infty}}\mathbb{P}_{\mu_N}
\Big[\sup_{\substack{|s-t|<\delta\\0\leq{s,t}\leq{T}}}\quad
[{J}^N_{t}(e_{n}) - {{J}}^N_{s}(e_{n})]^{2} > 
\epsilon \, \Big]\;=\; 0\;.
\end{equation*}
\end{lemma}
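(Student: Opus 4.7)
The approach is to use the decomposition
\[
J^N_t(e_n) \;-\; J^N_s(e_n) \;=\; \int_s^t J^N_r(\ms A_r e_n)\, dr \;+\; \bigl[\mathcal{M}^N_t(e_n) - \mathcal{M}^N_s(e_n)\bigr],
\]
coming from \eqref{f2.14} and \eqref{f2.7}, exactly as in the proof of Lemma \ref{th:lemmatight}. By the inequality $(a+b)^2 \le 2a^2 + 2b^2$, it then suffices to control the drift part and the martingale increment separately.

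For the drift, Cauchy--Schwarz yields
\[
\sup_{|s-t|<\delta}\Bigl|\int_s^t J^N_r(\ms A_r e_n)\, dr\Bigr|^2 \;\le\; \delta \int_0^T J^N_r(\ms A_r e_n)^2 \, dr,
\]
and the pointwise-in-$r$ version of the computation that produced the bound $C_0\, n^6$ in the proof of Lemma \ref{th:lemmatight} gives $\mathbb{E}_{\mu_N}[\int_0^T J^N_r(\ms A_r e_n)^2 \, dr] \le C_0\, n^6$, uniformly in $N$. Markov's inequality eliminates this term as $\delta \downarrow 0$.

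For the martingale I would apply Aldous's tightness criterion. By \eqref{VQuad}, the derivative of $\langle \mathcal{M}^N(e_n)\rangle_\cdot$ equals
\[
\frac{E^2}{\gamma^2 N}\sum_{j \in \Lambda_N} \frac{(\nabla^+_N e_n)(j)^2}{\lambda_r(j)^2}\, \xi_r(j)^2\, \mf h_j(\eta_r),
\]
whose expectation is bounded uniformly in $r$ by $C_0 \, n^2$ thanks to Lemmas \ref{s08} and \ref{s09}. Hence, for any stopping time $\tau \le T$ and any $\theta \le \delta$, the strong Markov property together with the identity $\mathbb{E}[(M_{\tau+\theta}-M_\tau)^2] = \mathbb{E}[\langle M\rangle_{\tau+\theta}-\langle M\rangle_\tau]$ gives
\[
\mathbb{E}_{\mu_N}\bigl[\bigl(\mathcal{M}^N_{\tau+\theta}(e_n) - \mathcal{M}^N_\tau(e_n)\bigr)^2\bigr] \;\le\; C_0\, n^2\, \delta,
\]
which is exactly Aldous's criterion. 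Combined with the jump estimate $|\mathcal{M}^N_s(e_n) - \mathcal{M}^N_{s-}(e_n)| = O(n/\sqrt{N})$, derived as in the last display of the proof of Lemma \ref{s2.3}, this yields $C$-tightness of the sequence $\{\mathcal{M}^N_\cdot(e_n)\}_{N \ge 1}$ in $D([0,T], \mathbb{R})$, i.e.\ by Theorem VI.3.26 of \cite{js}, $\omega_\delta(\mathcal{M}^N_\cdot(e_n)) \to 0$ in probability uniformly in $N$, which is the desired statement for the martingale component.

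I do not anticipate a serious obstacle; every ingredient is already in place from the preceding lemmas. The only delicate point worth flagging is that a naive partition-and-Doob chaining on intervals of length $\delta$ yields only the vacuous bound $C\, T\, n^2 /\epsilon^2$, independent of $\delta$, so one genuinely has to pass through Aldous's criterion (or, equivalently, through an $L^4$-BDG estimate exploiting the higher moments of $\xi_r$ from Lemma \ref{s09}) in order to convert the linear-in-$\delta$ quadratic variation estimate into the modulus-of-continuity control stated in the lemma.
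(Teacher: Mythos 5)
Your proof is correct and follows essentially the same route as the paper: the same decomposition of $J^N_t(e_n)-J^N_s(e_n)$ into a drift integral and a martingale increment, the same Cauchy--Schwarz/Tchebychev treatment of the drift using the $C_0 n^6$ bound from the proof of Lemma \ref{th:lemmatight}, and the same Aldous-criterion-plus-jump-estimate argument for the martingale (the paper phrases the last step via the inequality $\omega_\delta \le 2\omega'_\delta + \sup_t|\bs x_t - \bs x_{t-}|$ rather than citing Theorem VI.3.26 of \cite{js}, but the content is identical). Your closing remark about why naive chaining fails and Aldous is genuinely needed is also on point.
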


\begin{proof}
Recall the decomposition of $J^N_{t}(e_{n})$ presented at the
beginning of the proof of Lemma \ref{th:lemmatight}. We first claim
that for every $\epsilon>0$,
\begin{equation}
\label{f2.15}
\lim_{\delta\rightarrow{0}} \limsup_{N\rightarrow{+\infty}}
\mathbb{P}_{\mu_N}\Big[\sup_{\substack{|s-t|<\delta\\0\leq{s,t}\leq{T}}}\quad
\big| \, \mathcal{M}_{t}^{N}(e_n) - \mathcal{M}_{s}^{N}(e_n)\, \big| > 
\epsilon\, \Big]\;=\; 0\;. 
\end{equation}

Denote by $\omega'_{\delta}(\bs x)$ the modified modulus of continuity
of a path $\bs x$ in $D([0,T], \bb R)$. Since $\omega_{\delta}(\bs x)
\leq 2\omega'_{\delta}(\bs x) + \sup_{t\le T}|\bs x_t - \bs x_{t-}|$,
to prove \eqref{f2.15} it is enough to show that for every $\epsilon>0$
\begin{equation}
\label{f2.16}
\begin{split}
& \lim_{\delta\rightarrow{0}} \limsup_{N\rightarrow{+\infty}}
\mathbb{P}_{\mu_N}
\big[\omega'_{\delta}(\mathcal{M}_{t}^{N}(e_n)) > \epsilon \,\big]\;=\; 0\;,
\\
&\quad 
\limsup_{N\rightarrow{+\infty}} \mathbb{P}_{\mu_N} \big[\sup_{t\le T}
|\mathcal{M}_{t}^{N}(e_n)-\mathcal{M}_{t_{-}}^{N}(e_n)|>\epsilon\,
\big]\;=\; 0\;.
\end{split}
\end{equation}

Clearly, $|\mathcal{M}_{t}^{N}(e_n) - \mathcal{M}_{t_{-}}^{N} (e_n)| =
|{J}_{t}^{N} (e_n) - {J}_{t_{-}}^{N} (e_n)|$. By definition of $J^N_t$
and since $|\xi_{t-}(j)/\xi_{t}(j)\big| \le e^{-\gamma/N}$ this
latter quantity is less than or equal to
\begin{equation*}
\frac{1}{\sqrt N} \sum_{j=0}^{N-1} \frac{ |(\nabla_N^+e_n)(j)|}
{\lambda_t(j)} \, \big| \xi_t(j) -  \xi_{t_{-}}(j) \big| \;\leq\;
\frac{C_0 n}{N^{3/2}} \sum_{j=0}^{N-1} \xi_t(j) \;.
\end{equation*}
The second condition of \eqref{f2.16} follows from the previous
estimate, from Markov inequality and from the fact that the
expectation of $\xi_t(j)$ (which is equal to $\lambda_t(j)$) is
uniformly bounded.

We turn to the first condition of \eqref{f2.16}.  By Aldous criterium,
it is enough to show that for every $\epsilon>0$
\begin{equation*}
\lim_{\delta\rightarrow{0}} \limsup_{N\rightarrow{+\infty}}
\sup_{\substack{\tau\in{\mathfrak {T}_{\tau}}\\
0\leq{\theta}\leq{\delta}}}\mathbb{P}_{\mu_N}
\Big[|\mathcal{M}_{\tau+\theta}^{N}(e_n)-
\mathcal{M}_{\tau}^{N}(e_n)| > \epsilon\Big]\;=\;0\;,
\end{equation*}
where $\mathfrak {T}_{\tau}$ represents the set of stopping times
bounded by $T$. By Tchebychev inequality and by the explicit
expression for the quadratic variation of $\mathcal{M}^N_t(e_n)$, the
previous probability is bounded by
\begin{equation*}
\mathbb{E}_{\mu_N} \Big[ \int_{\tau}^{\tau+\theta}
\frac{E^2}{\gamma^2 \epsilon^2 N}\sum_{j=0}^{N-1} \xi_s(j)^2 \mf h_j(\eta_s) 
\, \frac{(\nabla_N^+ e_n)(j/N)^2}{\lambda_s(j)^2} \,ds\Big]\;.
\end{equation*}
By Lemma \ref{s09} and Lemma \ref{s08}, the previous expectation is
bounded above by $C_0 n^2 \delta/\epsilon^2$, proving the first
assertion of \eqref{f2.16}. This proves \eqref{f2.15}.

We claim that for every $\epsilon>0$
\begin{equation}
\label{f2.17}
\lim_{\delta\rightarrow{0}}\limsup_{N\rightarrow{+\infty}}
\mathbb{P}_{\mu_N}\Big[\sup_{\substack{|s-t|<\delta\\0\leq{s,t}\leq{T}}}\Big|
\int_{s}^{t}J_r^N(\ms A_r e_n)\,dr\Big|>\epsilon\Big]=0
\end{equation}
By Tchebychev inequality, the previous probability is bounded by
\begin{equation*}
\frac{\delta}{\epsilon^2} \, \mathbb{E}_{\mu_N}\Big[
\int_{0}^{T}\Big(\frac{1}{\gamma \sqrt N}\sum_{j=0}^{N-1}
\frac{\nabla_N^+ (\ms A_r e_n)(j/N)}{\lambda_r(j)} 
\, [\xi_r(j)-\lambda_r(j)] \Big)^2\,dr\Big] \; .
\end{equation*}
The computations performed in the proof of Lemma \ref{th:lemmatight}
yield that the previous expression is bounded by $C_0 n^6
\delta/\epsilon^2$. This proves \eqref{f2.17}.

The assertion of the lemma is a consequence of \eqref{f2.15},
\eqref{f2.17}. 
\end{proof}

\section{Exponential estimates}
\label{sec4}

We present in this section some bounds on the process $\xi_t$. By
\eqref{fh1} and by the definition of the variables $\xi_t(j)$, for $0\leq
j\leq N-2$,
\begin{equation} 
\label{02}
\xi_{t}(j) \;\le\; \xi_{t}(j+1) \;\le\;e^{- \gamma/N}\xi_{t}(j)\;.
\end{equation}

\begin{lemma}
\label{s09}
Fix $n\ge 1$, $T>0$ and a sequence of probability measures $\{\mu_N :
N\geq 1\}$ on $\Sigma_N$. There exists a finite constant $C_1$ and
$N_0\ge 1$, depending only on $n$, $\beta$, $E$ and $T$, such that for
all $0\le j\le N-1$ and all $N\ge N_0$,
\begin{equation*}
\bb E_{\mu_N} \Big[
\sup_{0\le t\le T} \xi_t(j)^n \Big] \;\le\; C_1\;.
\end{equation*}
\end{lemma}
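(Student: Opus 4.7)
The plan is to reduce the pointwise bound to an estimate on the sum $Z_t := \sum_{j \in \Lambda_N}\xi_t(j)^n$, and then to control $Z_t$ by a Gronwall argument on its drift together with a Burkholder--Davis--Gundy estimate on its martingale part. Iterating \eqref{02} yields $\xi_t(0) \le \xi_t(j) \le e^E\xi_t(0)$ for every $j\in\Lambda_N$; in particular $N\xi_t(0)^n \le Z_t$, so $\xi_t(j)^n \le e^{nE}Z_t/N$ pathwise, and the claim is equivalent to $\mathbb{E}_{\mu_N}[\sup_{0\le t\le T}Z_t] \le C\, N$.

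To prove this, I would apply Dynkin's formula to $Z_t$. Computing $\mathcal{L}_N\xi^n(j)$ from the fact that the only jumps affecting $\xi_t(j)$ are those across the bond $(j,j+1)$, which multiply it by $e^{\pm\gamma/N}$, one obtains, for interior sites,
\begin{equation*}
\mathcal{L}_N\xi^n(j) \;=\; nEN\,[\eta(j+1)-\eta(j)]\,\xi(j)^n \;+\; O(1)\,\xi(j)^n,
\end{equation*}
with analogous formulae at the boundaries involving $\alpha$ and $\beta$. This yields a decomposition $Z_t = Z_0 + \int_0^t \Psi_n(\eta_s)\,ds + M_t^{(n)}$, with $M^{(n)}$ a martingale and $\Psi_n(\eta)=\sum_j \mathcal{L}_N\xi^n(j)$.

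The key step is the pointwise drift bound $|\Psi_n(\eta)| \le C_n\,Z(\eta)$. Summing the interior drift and performing Abel summation rewrites it, up to boundary corrections, as $-nEN\sum_k \eta(k)\,[\xi(k)^n-\xi(k-1)^n]$. By \eqref{fh1} one has $\xi(k)^n-\xi(k-1)^n = n\eta(k)\,E\,\xi(k-1)^n/N + O(\xi(k-1)^n/N^2)$, so the dangerous $EN$-prefactor is cancelled by the $1/N$ coming from this difference, leaving a remainder $\le C_n\,Z$. The residual boundary pieces, of naive size $nEN\,\alpha\,\xi(0)^n$ and $nEN\,\beta\,\xi(N-1)^n$, are absorbed into $C_n\,Z$ by the same multiplicative monotonicity \eqref{02} used in reverse, via $N\xi(0)^n \le Z$ and $N\xi(N-1)^n \le e^{nE}Z$. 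The $O(1)\,\xi^n$ corrections obviously sum to $O(Z)$.

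Given $|\Psi_n|\le C_n Z$, Gronwall in expectation yields $\mathbb{E}[Z_t] \le Z_0\,e^{C_n t} \le C_n N$, since $Z_0 \le N e^{nE}$ is deterministic. For the martingale, the contributions from distinct bonds are orthogonal, each jump has size $O(\xi(j)^n/N)$ at rate $O(N^2)$, so $\langle M^{(n)}\rangle_T \le C\int_0^T Z_{2n,s}\,ds$; applying the same expectation estimate with $n$ replaced by $2n$ gives $\mathbb{E}[\langle M^{(n)}\rangle_T]\le C N$, and Burkholder--Davis--Gundy then yields $\mathbb{E}[\sup_t |M_t^{(n)}|] \le C\sqrt{N}$. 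Combining these in the pathwise estimate $\sup_{t\le T}Z_t \le Z_0 + \int_0^T|\Psi_n(\eta_s)|\,ds + \sup_{t\le T} |M_t^{(n)}|$ and taking expectations gives $\mathbb{E}[\sup_{t\le T}Z_t] = O(N)$, which together with the initial reduction proves the lemma. The main obstacle is the drift estimate $|\Psi_n|\le C_n Z$: the Abel summation must cancel the bulk $nEN$-prefactor through the sharp microscopic identity \eqref{fh1}, while the boundary leakage must be absorbed through the reverse direction of the monotonicity \eqref{02}; failure of either cancellation costs a factor of $N$ and destroys the argument.
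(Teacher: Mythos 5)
Your argument is correct and its overall architecture coincides with the paper's: a semimartingale decomposition of $\xi_t(j)^n$, a telescoping of the drift down to boundary terms, the two-sided monotonicity \eqref{02} to pass between the pointwise quantity and its spatial average, and a maximal inequality for the martingale part. The one genuine difference lies in how the fixed-time moment bound is obtained. The paper proves $\sup_{t\le T}\max_j \bb E_{\mu_N}[\xi_t(j)^n]\le C_1$ sitewise, by showing $\partial_t f_n \le \Omega_n f_n$ for $f_n(t,j)=\bb E_{\mu_N}[\xi_t(j)^n]$, invoking the maximum principle, and then appealing to the $L^\infty$ semigroup bound of Lemma \ref{s05} (hence to the Dirichlet form machinery of Section \ref{sec11}); you instead sum over $j$ first, exploit the Abel-summation cancellation of the $nEN$ prefactor via \eqref{fh1} to get the pointwise drift bound $|\Psi_n|\le C_n Z$, and close with Gronwall. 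Your route is more self-contained for this particular lemma (it bypasses Lemmas \ref{s06} and \ref{s05} entirely), at the price of not producing the sitewise estimate \eqref{fg5}, which the paper reuses to control the boundary terms $\xi_s(0)^n$, $\xi_s(N-1)^n$ in the drift integral — you absorb those instead through $N\xi(0)^n\le Z$ and $N\xi(N-1)^n\le e^{nE}Z$, which is legitimate. The remaining discrepancy (Burkholder--Davis--Gundy in $L^1$ versus the paper's Doob $L^2$ inequality with orthogonality of the bond martingales) is immaterial; both give a martingale contribution of lower order than $N$.
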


\begin{proof}
Fix $n\ge 1$ and $T>0$. In the proof $C_1$ represents a finite
constant which depends only on $n$, $\beta$, $T$ and $E$ and which may
change from line to line. We first claim that
\begin{equation}
\label{fg5}
\sup_{0\le t\le T}  \; \max_{0\le j\le N-1}
\bb E_{\mu_N} [\xi_t(j)^n ] \;\le\; C_1\;.
\end{equation}

A similar computation to the one performed just after \eqref{fg1}
shows that  for each $0\le j\le N-1$
\begin{equation}
\label{fg4}
\xi_t(j)^n =\xi_0(j)^n \;+\; \int_0^t 
\Big\{ [\Omega_n \, \xi_s^n]\, (j) + A_n (s,j) \Big\}
\, ds \; +\; \mathcal{M}^N_n(t,j) \;.
\end{equation}
In this formula, $\mathcal{M}^N_n(\cdot,j)$ is a zero-mean martingale;
$\Omega_n$ is the linear operator equal to $\Omega$ in the interior of
$\Lambda_N$ and given at the boundary by
\begin{equation}
\label{fg3}
\left\{
\begin{split}
& (\Omega_n f)(0) \;=\; - \alpha \, N \, R_n\,
f(0) \;+\; N (\nabla_N^+ f) (0)\;, \\
& \vphantom{\Big\{}
(\Omega_n f)(N-1) \;=\; \beta N S_n
f(N-1) \;-\; N \Big(1+ \frac{E}{N}\Big) (\nabla_N^- f) (N-1)\;,
\end{split}
\right.
\end{equation}
where
\begin{equation*}
R_n \;=\; N \Big(1+ \frac{E}{N}\Big)
\Big(1-e^{n\gamma/N} \Big) \;,\quad
S_n \;=\; N  \Big(e^{-n\gamma/N} -1\Big)\; ;
\end{equation*}
and 
\begin{equation*}
\label{fg4.1}
A_n(t,j) \;=\; - N^2 \Big\{\Big(1+\frac{E}{N}\Big)(e^{\gamma n/N}-1)
+ (e^{-\gamma n/N}-1) \Big\}\, \xi_t(j)^n \, \eta_t(j)\, \eta_t(j+1) \;.
\end{equation*}
Notice that $A_1(t,j)=0$ and that $R_1=S_1=E$ so that
$\Omega_1=\Omega$.

It follows from the previous computations that $f_n(t,j) =\bb
E_{\mu_N} [\xi_t(j)^n ]$ satisfies the differential inequality
\begin{equation*}
\partial_t f(t,j) \;\le\; (\Omega_n f) (t,j) \;.
\end{equation*}

Let $F_n(t,\cdot)$ be the solution of equation \eqref{fg9}, with
$\Omega_n$ instead of $\Omega$ and initial condition $F_n(0,j) =
f_n(0,j)$. By the maximum principle, $f_n(t, \cdot) \le
F_n(t,\cdot)$ for all $t\ge 0$. Claim \eqref{fg5} follows from Lemma
\ref{s05} and the bound $F_n(0,j)\leq{\exp\{-\gamma n\}}$.

It remains to bring the supremum inside the expectation. Since, by
\eqref{02}, $\xi_t(j)$ is increasing in $j$, it is enough to prove the
lemma for $j=N-1$. However, by \eqref{02}, $\xi_t(N-1) \le e^{-
  \gamma} \xi_t(j)$ so that
\begin{equation*}
\bb E_{\mu_N} \Big[
\sup_{0\le t\le T} \xi_t(N-1)^n \Big] \;\le\;
e^{-\gamma n} \, \bb E_{\mu_N} \Big[
\sup_{0\le t\le T} \frac 1N \sum_{j=0}^{N-1} \xi_t(j)^n \Big]\; .
\end{equation*}
By \eqref{fg4},
\begin{equation*}
\xi_t(j)^n \; \leq\;  \xi_0(j)^n \;+\; \int_0^t [\Omega_n \xi_s^n](j)
\,ds  \; +\; \mathcal{M}^N_n(t,j)\;.
\end{equation*}
We need therefore to estimate three terms. The first one is given by
\begin{equation*}
\frac 1N \sum_{j=0}^{N-1} \xi_0(j)^n \;\le\; e^{-\gamma n}\; .
\end{equation*}
The second one is also simple to handle. Since
\begin{equation*}
\frac 1N \sum_{j=0}^{N-1} [\Omega_n \xi^n](j) \;\le\; E \, \xi(0)^n
\;+\; \beta\, N\, \big(e^{-\gamma n/N} - 1 \big) \, \xi(N-1)^n\;,
\end{equation*}
we have that
\begin{equation*}
 \bb E_{\mu_N} \Big[ \sup_{0\le t\le T}
\int_0^t  \frac 1N \sum_{j=0}^{N-1} [\Omega_n \xi_s^n](j) \, ds \Big] \;\le\; C_1 \bb E_{\mu_N}
\Big[ \int_0^T \big\{ \xi_s(0)^n + \xi_s(N-1)^n\}\, ds  \Big]\;.
\end{equation*}
By \eqref{fg5}, this expression is bounded by a constant independent
of $N$. To estimate the martingale term, apply Doob's inequality and
use the fact that the martingales $\mathcal{M}^N_n(t,\cdot)$ are
orthogonal to get that
\begin{equation*}
\bb E_{\mu_N} \Big[ \Big(\sup_{0\le t\le T}
\frac 1N \sum_{j=0}^{N-1} \mathcal{M}^N_n(t,j) \Big)^2 \Big] \;\le\;
\bb E_{\mu_N} \Big[ \int_0^T \frac {C_1 }{N^2}
\sum_{j=0}^{N-1} \xi_t(j)^{2n} \, dt \Big]\; .
\end{equation*}
By \eqref{fg5}, this expression is bounded by $C_1N^{-1}$, which
concludes the proof of the lemma.
\end{proof}

\begin{lemma}
\label{sd4}
Let $\{\mu_N: N\geq 1\}$ be a sequence of measures on $\Sigma_N$
satisfying \eqref{08}.  Then, for each fixed $T>0$, there exist finite
constants $C_1$ and $N_0\ge 1$, depending only on $E$, $\beta$, $T$
and $A_2$ such that
\begin{equation*}
\sup_{0\le t\le T} \max_{j\in\Lambda_N} \bb E_{\mu_N} \Big[
\big(\xi_t(j) - \lambda_t(j)\big)^4 \Big] \;\le\;
\frac{C_1}{N^2}\;\cdot
\end{equation*}
\end{lemma}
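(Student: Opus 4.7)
The plan is to represent $U_t(j) := \xi_t(j) - \lambda_t(j)$ via a mild (variation-of-constants) formula and estimate the two resulting pieces separately. Subtracting \eqref{fg9} from \eqref{fg2}, $U_t$ obeys the linear stochastic evolution
\[
dU_t(j) \;=\; (\Omega U_t)(j)\,dt \;+\; d\mc M^N_t(j)\;,\qquad U_0(j) \;=\; \xi_0(j) - \lambda_0(j)\;.
\]
Since $\Omega$ is a finite matrix with nonnegative off-diagonal entries (by inspection of \eqref{fg11}), its fundamental solution $p_t(j,k)$ is nonnegative, and
\[
U_t(j) \;=\; (e^{t\Omega}U_0)(j) \;+\; Z_t(j)\;,\qquad Z_t(j) \;=\; \sum_{k\in\Lambda_N}\int_0^t p_{t-s}(j,k)\,d\mc M^N_s(k)\;.
\]
I will show that both $\bb E_{\mu_N}[(e^{t\Omega}U_0)(j)^4]$ and $\bb E_{\mu_N}[Z_t(j)^4]$ are $O(N^{-2})$.

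For the initial-data piece, I factor $U_0(j) = \lambda_0(j)\{e^{X_j}-1\}$ with $X_j := -(\gamma/N)\sum_{k=1}^{j}\{\eta_0(k) - \rho_0(k/N)\}$. Since $\eta_0, \rho_0 \in [0,1]$ forces $|X_j| \le |\gamma|$, the Lipschitz estimate $|U_0(j)| \le C|X_j|$ together with hypothesis \eqref{08} gives $\bb E_{\mu_N}[U_0(j)^4] \le CA_2/N^2$ uniformly in $j$. Combining the $L^\infty$ stability bound $\sum_k p_t(j,k) \le Ce^{Ct}$ (from Lemma \ref{s05} applied to the constant function) with the positivity of $p_t$, Jensen's inequality transfers this pointwise fourth-moment bound through the semigroup to $\bb E_{\mu_N}[(e^{t\Omega}U_0)(j)^4] \le C/N^2$.

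For the stochastic convolution, the pairwise orthogonality in \eqref{VQuad} makes $s \mapsto \sum_k \int_0^s p_{t-r}(j,k)\,d\mc M^N_r(k)$ a martingale in $s$; by Burkholder--Davis--Gundy,
\[
\bb E_{\mu_N}[Z_t(j)^4] \;\le\; C\,\bb E_{\mu_N}\Big[\Big(\int_0^t \sum_k p_{t-s}(j,k)^2 \, \xi_s(k)^2 \, \mf h_k(\eta_s)\,ds\Big)^{\!2}\Big]\;.
\]
Boundedness of $\mf h_k$, together with the dispersive estimate $|p_t(j,k)| \le C/(N\sqrt{t})$ from Corollary \ref{s03}, yields $\sum_k p_{t-s}(j,k)^2 \le (\max_k p_{t-s}(j,k))\sum_k p_{t-s}(j,k) \le C/(N\sqrt{t-s})$. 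A Cauchy--Schwarz in time with weight $(t-s)^{-1/2}$ and the uniform moment bound $\sup_{s,k}\bb E_{\mu_N}[\xi_s(k)^4] \le C$ (Lemma \ref{s09} with $n = 4$, together with the monotonicity \eqref{02}) produce $\bb E_{\mu_N}[Z_t(j)^4] \le Ct/N^2$, completing the proof. The main obstacle is supplying the \emph{simultaneous} kernel estimates $\sum_k p_t(j,k) \le Ce^{Ct}$ and $|p_t(j,k)| \le C/(N\sqrt{t})$ despite the $O(N)$ boundary zeroth-order terms in $\Omega$; without the dispersive bound, a direct It\^o calculation for $\bb E_{\mu_N}[U_t(j)^2]$ would produce a source term of order $E^2\,\bb E_{\mu_N}[\xi_s(j)^2 \mf h_j(\eta_s)] = O(1)$, yielding via Gr\"onwall only the trivial bound $\bb E_{\mu_N}[U_t(j)^2] \le C$ rather than the required $O(N^{-1})$ — it is the smoothing of the heat semigroup that supplies the missing factor.
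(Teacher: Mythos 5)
Your proof follows essentially the same route as the paper: the Duhamel representation of $\xi_t-\lambda_t$ through the fundamental solution of \eqref{fg9}, the bound $|\xi_0(k)-\lambda_0(k)|\le C N^{-1}|\sum_{j\le k}\{\eta_0(j)-\rho_0(j/N)\}|$ combined with \eqref{08} and H\"older/Jensen against the positive, mass-bounded kernel, and the key on-diagonal estimate $q_t(j,k)\le C(N\sqrt{t})^{-1}$ of Corollary \ref{s03} together with a Cauchy--Schwarz in time and Lemma \ref{s09} for the stochastic convolution. The one imprecision is your Burkholder--Davis--Gundy step: BDG controls $\bb E[Z_t(j)^4]$ by the second moment of the \emph{square bracket} $[Z(j)]_t$, not of the predictable compensator appearing in your display; to pass to $\<Z(j)\>_t$ one must add the jump term $\bb E[\sup_s|\Delta Z_s(j)|^4]$ (this is \cite[Lemma 3]{dg}, as the paper does), which is harmless here since $|\Delta\xi_s(j)|\le C N^{-1}\xi_s(j)$ makes it $O(N^{-4})$.
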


\begin{proof}
For  $0\le k\le N-1$ and $t\ge 0$, let $q_t(k,\cdot)$ be the
solution of equation \eqref{fg9} with initial condition $q_0(k,j) =
\delta_{k,j}$. By \eqref{fg2},
\begin{equation*}
\xi_t(j) \;=\; \sum_{k=0}^{N-1} \xi_0(k) q_t(k,j) \;+\;
\sum_{k=0}^{N-1} \int_0^t  q_{t-s}(k,j) \, d\mathcal{M}^N_s(k)\;, 
\end{equation*}
so that
\begin{equation}
\label{fg10}
\xi_t(j) - \lambda_t(j) \;=\; \sum_{k=0}^{N-1}
\Big(\xi_0(k) - \lambda_0(k)\Big) q_t(k,j) \;+\;
\sum_{k=0}^{N-1} \int_0^t  q_{t-s}(k,j) \, d\mathcal{M}^N_s(k)\;.
\end{equation}
To prove the lemma we need to estimate the fourth moments of
the terms on the right hand side of \eqref{fg10}.

By H\"older's inequality,
\begin{equation*}
\begin{split}
& E_{\mu_N} \Big[ \Big(
\sum_{k=0}^{N-1} \Big(\xi_0(k) - \lambda_0(k)\Big)
\, q_t(k,j) \Big)^4 \Big] \\
&\qquad \le\;
E_{\mu_N} \Big[
\sum_{k=0}^{N-1} \Big(\xi_0(k) - \lambda_0(k)\Big)^4
q_t(k,j)  \Big] \Big( \sum_{k=0}^{N-1} q_t(k,j) \Big)^3.
\end{split}
\end{equation*}
Notice that
\begin{equation*}
| \xi_0(k) - \lambda_0(k) | \;\le\;
\frac {C_1 }{N} \Big| \sum_{j=1}^k \Big\{ \eta_0(j)
- \rho_0\Big(\frac{j}{N}\Big) \Big\} \Big|
\end{equation*}
for some finite constant $C_1$ which depends only on $E$, $\beta$,
$T$, $A_2$, and whose value may change from line to line. Therefore,
by assumption \eqref{08} and since, by \eqref{f2.3},
$\sum_{k=0}^{N-1}q_s(k, j)$ is uniformly bounded in $j$ and $0\le s\le
T$, the fourth moment of the first term on the right hand side of
\eqref{fg10} is bounded by $C_1/N^2$.

We turn to the martingale term in \eqref{fg10}.  For $0\le r\le t$,
let $\mathcal{M}^N_{j,t} (r)$ be the martingale defined by
\begin{equation*}
\mathcal{M}^N_{j,t}(r) \;=\; \sum_{k=0}^{N-1} \int_0^r
q_{t-s}(k,j) \, d\mathcal{M}^N_s(k)\;.
\end{equation*}
By the Burkholder-Davis-Gundy inequality and \cite[Lemma 3]{dg}, there
exists a finite constant $C_0$ such that
\begin{equation*}
\bb E_{\mu_N} \big[ \mathcal{M}^N_{j,t}(t)^4  \big]
\;\leq\; C_0 \Big\{ \bb E_{\mu_N} \big[ \<\mathcal{M}^N_{j,t}\>_t^2
\big] \;+\; \bb E_{\mu_N} \big[ \sup_{0\le s\le t}
|\mathcal{M}^N_{j,t}(s) - \mathcal{M}^N_{j,t}(s-) \big|^4\,
\big]  \Big\} \;,
\end{equation*}
where $\<\mathcal{M}^N_{j,t}\>_r$ stands for the quadratic variation
of the martingale $\mathcal{M}^N_{j,t}$.

We first estimate the jump term. By \eqref{fg10} and by definition of $\xi_s$,
$|\mathcal{M}^N_{j,t}(s) - \mathcal{M}^N_{j,t}(s-) \big| = |\xi_s(j) -
\xi_{s-}(j)| \le (C_0/N) \xi_s(j)$. Hence, by Lemma \ref{s09}, the
second expectation on the right hand side of the previous formula is
bounded above by $C_0/N^4$.

It remains to examine the quadratic variation.  By \eqref{VQuad} the
quadratic variation of the martingale $\mathcal{M}^N_{j,t}(r)$ is
bounded above by
\begin{equation*}
\begin{split}
& C_1 \int_0^r \sum_{k=0}^{N-1} q_{t-s}(k, j)^2 \, \xi_s(k)^2 \, ds \\
&\quad \le \; C_1  \int_0^r \max_{0\leq k\leq N-1}q_{t-s}(k,j)\, 
\sum_{k=0}^{N-1} q_{t-s}(k, j) \, \xi_s(k)^2 \, ds\;.
\end{split}
\end{equation*}
By remark \eqref{f2.3}, $\sum_{k=0}^{N-1}q_s(k, j)$ is uniformly
bounded in $j$ and $0\le s\le T$, and by Corollary \ref{s03},
$\max_{0\le k \le N-1} q_{t-s}(k, j)$ is bounded above by $C_1
\{N^2(t-s)\}^{-1/2}$ for all $N$ large enough and all $j$.  Since, by
\eqref{02}, $\xi_s(k) \le \xi_s(N-1)$, $0\leq k \leq{N-1}$, the
previous expression is less than or equal to
\begin{equation*}
C_1  \int_0^r \frac{1}{N \sqrt{t-s}}\, \xi_s(N-1)^2 \, ds \;.
\end{equation*}
Hence, by the Cauchy-Schwarz's inequality,
\begin{equation*}
\bb E_{\mu_N}
\Big[ \<\mathcal{M}^N_{j,t}\>_t^2  \Big] \;\le\;
\frac {C_1}{N^2} \, \bb E_{\mu_N}
\Big[\int_0^t \frac{1}{\sqrt{t-s}} \, \xi_s(N-1)^4 \,ds \Big]\;,
\end{equation*}
which concludes the proof of the lemma in view of Lemma \ref{s09}.
\end{proof}

\section{The operators $\Omega_n$}
\label{sec11}

We prove in this section some properties of the solutions of the
differential equation $\partial_t f_t = \Omega_n f_t$, where
$\Omega_n$ is the linear operator defined by \eqref{fg11} and
\eqref{fg3}. We start with a result on classical solutions of the
viscous Burgers equation \eqref{vBe}.

\begin{lemma} 
\label{s2.13}
Let $\rho_0$ be density profile in $C^4([0,1])$. Then, the solution of
the viscous Burgers equation \eqref{vBe} belongs to
$C^{2,3}([0,\infty)\times [0,1])$ and the solution of the linear
equation \eqref{eq_linearized} belongs to $C^{2,4}([0,\infty)\times
[0,1])$.
\end{lemma}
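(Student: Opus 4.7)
The plan is to exploit the Cole-Hopf transformation already set up just before the lemma: instead of attacking the nonlinear viscous Burgers equation directly, I would first solve the linear initial-boundary value problem \eqref{eq_linearized} by standard parabolic theory to obtain regularity of $K$, and then recover $\rho$ through the inverse transformation $\rho = (\partial_x K)/(EK)$. This route avoids any direct treatment of the nonlinearity $\partial_x b(\rho)$.

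First I would verify that $K_0(x) = \exp\{E\int_0^x \rho_0(y)\,dy\}$ belongs to $C^5([0,1])$ and is bounded below by a positive constant, both immediate from $\rho_0\in C^4([0,1])$ via the chain rule. Then I would check the compatibility conditions of \eqref{eq_linearized} at the corners $(0,0)$ and $(0,1)$ of the parabolic cylinder. The zeroth-order compatibility at $x=0$ reads $K_0'(0) = E\rho_0(0)K_0(0) = E\alpha K_0(0)$, which matches the Robin boundary condition; similarly at $x=1$ using $\rho_0(1)=\beta$. For the higher-order compatibility needed for $C^{2,4}$ regularity up to $t=0$, I would iteratively use the PDE to express $\partial_t^k K\vert_{t=0}$ in terms of spatial derivatives of $K_0$ and differentiate the Robin conditions in $t$ to check agreement; these identities reduce to polynomial relations among the derivatives $\rho_0^{(j)}(0)$, $\rho_0^{(j)}(1)$ that hold automatically because $K_0'=E\rho_0 K_0$.

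Once compatibility is in place, I would invoke the classical existence and regularity theorem for linear parabolic initial-boundary value problems with Robin data (Ladyzhenskaya-Solonnikov-Uraltseva, Chapter IV) to obtain a unique global solution $K\in C^{2,4}([0,\infty)\times[0,1])$. Strict positivity $K>0$, and a uniform positive lower bound on any strip $[0,T]\times[0,1]$, follows from the parabolic maximum principle applied to the Robin problem together with $K_0>0$.

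Finally I would set $\tilde\rho := (\partial_x K)/(EK)$. Since $K\in C^{2,4}$ is bounded away from zero, $\tilde\rho\in C^{2,3}([0,\infty)\times[0,1])$; a direct differentiation reversing the Cole-Hopf transformation shows $\tilde\rho$ solves \eqref{vBe} with initial datum $\rho_0$ and Dirichlet data $\alpha,\beta$ (the boundary values come from $K_0'(0)/K_0(0)=E\alpha$ and its analogue at $x=1$, propagated in time by the Robin condition on $K$). By uniqueness of the weak solution of \eqref{vBe}, $\rho=\tilde\rho$, and both regularity claims of the lemma follow. The main technical nuisance will be the bookkeeping in the second-order corner-compatibility check; beyond that, everything is standard linear parabolic theory and calculus on a positive quantity.
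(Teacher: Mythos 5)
Your overall route is the same as the paper's: solve the linear Robin problem \eqref{eq_linearized} for $K$, obtain a positive lower bound, recover $\rho=\partial_x K/(EK)$, and conclude by uniqueness. The only methodological difference is that you invoke Ladyzhenskaya--Solonnikov--Uraltseva Schauder theory where the paper uses the eigenfunction expansion of the Robin operator (separation of variables); either tool is fine for the linear step.

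The genuine gap is your claim that the higher-order corner compatibility conditions ``hold automatically because $K_0'=E\rho_0K_0$''. They do not. The first-order compatibility at the corner $(0,0)$, obtained by differentiating the Robin condition in $t$ and substituting $\partial_tK=\partial_x^2K-E\partial_xK$, reads
\begin{equation*}
K_0'''(0)-EK_0''(0)\;=\;E\alpha\,\big(K_0''(0)-EK_0'(0)\big)\;.
\end{equation*}
Expressing $K_0''$, $K_0'''$ through $K_0'=E\rho_0K_0$, this reduces to
\begin{equation*}
\rho_0''(0)\;=\;E\,(1-2\alpha)\,\rho_0'(0)\;,
\end{equation*}
i.e.\ to the requirement that the Burgers equation itself hold at the corner (equivalently $\partial_t\rho(0,0)=0$, forced by the constancy of the Dirichlet datum). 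This is an extra constraint on $\rho_0$, not a consequence of $\rho_0\in C^4([0,1])$ and $\rho_0(0)=\alpha$; an analogous constraint arises at $x=1$, and the second-order conditions impose further relations on $\rho_0^{(3)}$, $\rho_0^{(4)}$ at the endpoints. Without these, $C^{2,4}$ regularity of $K$ up to $t=0$ at the corners fails, so the LSU theorem cannot be applied in the form you want. To be fair, the paper's one-line appeal to Fourier series has the same hidden requirement (convergence of the eigenfunction expansion in $C^{2,4}$ up to $t=0$ also needs $K_0$ to satisfy the boundary operator to the corresponding order), so you have correctly identified the delicate point of the lemma but resolved it incorrectly: the fix is either to add the compatibility hypotheses on $\rho_0$, or to prove the full regularity only on $[\tau,T]\times[0,1]$ for $\tau>0$ together with lower-order regularity up to $t=0$, and verify that this weaker statement suffices where the lemma is invoked.
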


\begin{proof}
Since $\rho_0$ belongs to $C^4([0,1])$, $K_0$ defined by
\eqref{eq_linearized} belongs to $C^{2m+1}([0,1])$ with
$m=2$. Therefore, the (generalized) Fourier series expansion of the
solution $K$ of \eqref{eq_linearized} with initial condition $K_0$,
provided by the method of separation of variables, yields that $K \in
C^{m,2m}([0,\infty)\times [0,1])$. Moreover, since the semigroup
corresponding to \eqref{eq_linearized} is positivity improving and
since $K_0$ is bounded below by a positive constant, so is
$K_t$. Thus, $\rho(t,x)=\partial_x K/E K$, which solves the viscous
Burgers equation, is well defined and belongs to
$C^{2,3}([0,\infty)\times [0,1])$. Uniquenes of classical solutions of
\eqref{vBe} completes the proof.
\end{proof}

\noindent{\bf Note:} With the same notation as in the previous lemma,
assume that $K_0$ belongs to $C^{2m+2}([0,1])$, $m\ge 0$, so that
$\partial_x K_0 \in C^{2m+1}([0,1])$. Since $\partial_x K$ satisfies
the same equation as $K$, one obtains by the previous argument that
$\partial_x K \in C^{m,2m}([0,\infty)\times[0,1])$, so that $K \in
C^{m,2m+1}([0,\infty)\times[0,1])$. \smallskip

We turn to the operator $\Omega_n$, which should be understood as a
small perturbation of $\Omega_0$, obtained from $\Omega_n$ by setting
$\alpha=\beta=0$, and which represents the generator of a weakly
asymmetric random walk on $\Lambda_N$ with reflection at the boundary.

Let $m_N$ be the measure given by
\begin{equation*}
m_N (k) \;=\; \Big(1+ \frac{E}{N}\Big)^{-k}\;,
\quad\text{$0 \le k\le  N-1$}\;.
\end{equation*}
Denote by $\< \cdot, \cdot\>_{m_N}$ the scalar product in
$L^2(m_N)$. A calculation shows that for each $n\ge 0$, $\Omega_n$ is
self-adjoint in $L^2(m_N)$, that is
\begin{equation*}
\<g, \Omega_n f\>_{m_N} \;=\; \<\Omega_n g, f\>_{m_N}\;, \quad
f, g \in L^2(m_N).
\end{equation*}

For  $p\ge 0$, denote by $\Vert \cdot \Vert_p$, the $L^p$ norm with respect
to $m_N$ and by $D_N$ the Dirichlet form associated to $\Omega_0$ and
$m_N$:
\begin{equation*}
D_N(f) \;=\; \<f, - \Omega_0 f\>_{m_N} \;=\;
N^2 \sum_{k=0}^{N-2} [ f (k+1) - f(k)]^2 \, m_N(k)\;.
\end{equation*}
The logarithmic Sobolev inequality for the weakly asymmetric random
walk on $\Lambda_N$ with reflection at the boundary \cite[Example
3.6]{ds96} states that there exists a finite constant $A_0$, depending
only on $E$, such that
\begin{equation}
\label{fa2}
\sum_{k=0}^{N-1} f (k)^2 \log f(k)^2 m_N(k) \;\le\; A_0 D_N(f)
\end{equation}
for all functions $f$ such that $\Vert f\Vert_2 =1$ and all $N\ge 2$.

Fix $n\ge 1$, an initial condition $f : \Lambda_N \to \bb R$
and denote by $f^{(n)}$ the solution of the linear differential
equation
\begin{equation}
\label{fa1}
\partial_t f_t^{(n)} \;=\; \Omega_n f_t^{(n)}\;, \quad
f_0^{(n)} \;=\; f\; .
\end{equation}
It is not difficult to prove a maximum principle for the solution of
this linear equation,
\begin{equation*}
f_t^{(n)} \;\ge\; 0\;\; \text{for all}\;\;  t\ge 0 \;\;\text{if} \;\;
f\;\ge\; 0 \;,
\end{equation*}
and to deduce the existence of a unique solution.

\begin{lemma}
\label{s06}
Fix $n\ge 1$ and let $f_t = f^{(n)}_t$ be the solution of
\eqref{fa1}. There exists a finite constant $C_0$, depending only on
$E$, $\beta$ and $n$, such that for any $t\geq 0$
\begin{equation*}
\Vert f_t \Vert_2^2 \;+\; \int_0^t D_N (f_s) \, ds \;\le\;
\, e^{C_0 t} \Vert f_0 \Vert_2^2\;.
\end{equation*}
\end{lemma}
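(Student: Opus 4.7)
The plan is to prove an $L^2(m_N)$ energy estimate by differentiating $\Vert f_t\Vert_2^2$ in time and exploiting the self-adjointness of $\Omega_n$ with respect to $m_N$ stated just before \eqref{fa2}. Writing $\Omega_n = \Omega_0 + (\Omega_n - \Omega_0)$, I would obtain
\[
\frac{d}{dt}\Vert f_t\Vert_2^2 \;=\; 2\<f_t,\Omega_n f_t\>_{m_N}
\;=\; 2\<f_t,\Omega_0 f_t\>_{m_N} + 2\<f_t,(\Omega_n-\Omega_0)f_t\>_{m_N}.
\]
The first term equals $-2D_N(f_t)$ because $\Omega_0$ is the self-adjoint generator of the reflected weakly asymmetric random walk reversible with respect to $m_N$, and $D_N$ is precisely its Dirichlet form. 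Since $\Omega_n$ and $\Omega_0$ differ only at the two boundary sites, the second term reduces to the pure multiplication contribution
\[
-\,2\alpha N R_n m_N(0)\, f_t(0)^2 \;+\; 2\beta N S_n m_N(N-1)\, f_t(N-1)^2.
\]
Using $e^{-\gamma/N} = 1 + E/N$ with $\gamma \le 0$, a Taylor expansion shows that $R_n$ and $S_n$ are bounded by a constant depending only on $E$ and $n$, and that $m_N(0)$, $m_N(N-1)$ stay in a fixed compact subset of $(0,\infty)$. The first boundary contribution has the favorable sign and can be dropped.

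The core step is a discrete boundary trace inequality: for every $\varepsilon>0$ there exists a finite constant $C_\varepsilon$, depending only on $E$, such that
\[
N\,m_N(N-1)\,f(N-1)^2 \;\le\; \varepsilon\, D_N(f) \;+\; C_\varepsilon \Vert f\Vert_2^2.
\]
To prove it, for any $k\in\{0,\dots,N-1\}$ write $f(N-1) = f(k) + \sum_{j=k}^{N-2}[f(j+1)-f(j)]$ and apply Cauchy--Schwarz to obtain $f(N-1)^2 \le 2 f(k)^2 + 2(N-1-k)\sum_{j=k}^{N-2}[f(j+1)-f(j)]^2$. Averaging this bound over $k \in \{N-L,\dots,N-1\}$ with $L = \delta N$, and using that $m_N$ is bounded above and below by positive constants uniformly in $N$, one gets
\[
f(N-1)^2 \;\le\; \frac{C_0}{\delta N}\,\Vert f\Vert_2^2 \;+\; \frac{C_0\,\delta}{N}\, D_N(f).
\]
Multiplying by $N m_N(N-1)$ and choosing $\delta$ small enough (depending on $\beta,E,n$) that the coefficient of $D_N(f)$ becomes at most $1$ yields the claimed absorption. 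I expect this trace bound to be the main technical obstacle, since the factor $N$ on the left exactly matches the strength $N^2$ of the Dirichlet form and forces the averaging window to scale proportionally to $N$.

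Combining the three estimates gives $\frac{d}{dt}\Vert f_t\Vert_2^2 + D_N(f_t) \le C_0\,\Vert f_t\Vert_2^2$ for some constant $C_0$ depending only on $E$, $\beta$ and $n$. Gronwall's inequality then produces $\Vert f_t\Vert_2^2 \le e^{C_0 t}\Vert f_0\Vert_2^2$, and integrating the differential inequality from $0$ to $t$ yields
\[
\Vert f_t\Vert_2^2 + \int_0^t D_N(f_s)\,ds \;\le\; \Vert f_0\Vert_2^2 + C_0\int_0^t \Vert f_s\Vert_2^2\,ds \;\le\; e^{C_0 t}\Vert f_0\Vert_2^2,
\]
which is the statement of the lemma. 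The logarithmic Sobolev inequality \eqref{fa2} is not needed for this particular estimate, but the underlying reversibility structure (which also enters in its proof) is exactly what makes the splitting $\Omega_n = \Omega_0 + \text{boundary}$ effective.
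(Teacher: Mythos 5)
Your proposal is correct and follows essentially the same route as the paper: differentiate $\Vert f_t\Vert_2^2$ in $L^2(m_N)$, discard the favorable $\alpha$-boundary term, control the $\beta$-boundary term by the discrete trace inequality obtained from averaging over a window of length proportional to $N$ near the right endpoint (this is exactly the paper's estimate \eqref{claim2} with $m=\delta N$), absorb into the Dirichlet form, and apply Gronwall. The only cosmetic difference is that you phrase the identity \eqref{eq2} via the splitting $\Omega_n=\Omega_0+(\Omega_n-\Omega_0)$ rather than computing it directly.
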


\begin{proof}
Fix $n\ge 1$. Differentiating $\Vert f_t \Vert_2^2$ yields 
\begin{equation}
\label{eq2}
\frac 12 \, \frac{d}{ds} \Vert f_t \Vert_2^2 \;=\; - \, \alpha N R_n f_s(0)^2m_N(0)
\;+\; \beta N S_n f_s(N-1)^2 m_N(N-1) \;-\; D_N(f_s) \;.
\end{equation}
For every $1\le m\le N$ and every $s\geq 0$,
\begin{equation}
\label{claim2}
f_s(N-1)^2 \;\le\; 2 e^{-\gamma}\Big( \frac m{N^2} D_N(f_s) \;+\;
\frac 1m \< f_s , f_s \>_{m_N} \Big)\;.
\end{equation}
Indeed, fix $1\le m\le N$. By Young's inequality, 
\begin{equation*}
f_s(N-1)^2  \;\le\; 2\Big(f_s(N-1)-\frac 1m \sum_{k=N-m}^{N-1}f_s(k)\Big)^2
\;+\; 2\Big(\frac 1m \sum_{k=N-m}^{N-1}f_s(k)\Big)^2\;.
\end{equation*}
By Schwarz inequality and since $m_N(k)\geq e^{\gamma}$ for $0\le k
\le N-1$, the second term on the right hand side is less than or equal
to 
\begin{equation*}
\frac 2m \sum_{k=N-m}^{N-1}f_s(k)^2 \;\leq\; 
\frac {2e^{-\gamma}}m \sum_{k=0}^{N-1}f_s(k)^2 \, m_N(k)
\;=\; \frac {2e^{-\gamma}}m \, \< f_s , f_s \>_{m_N}\;.
\end{equation*}
The first term on the right hand side can be rewritten as
\begin{equation*}
2\Big(\frac 1m
\sum_{k=N-m}^{N-1}\sum_{j=k}^{N-2}[f_s(j+1)-f_s(j)]\Big)^2 
\;\le\; 2 \sum_{k=N-m}^{N-1}\sum_{j=k}^{N-2}[f_s(j+1)-f_s(j)]^2 \;.
\end{equation*}
Since $m_N(k)\geq e^{\gamma}$ this sum is bounded above by
\begin{equation*}
2me^{-\gamma}\sum_{j=0}^{N-2}[f_s(j+1)-f_s(j)]^2 \, m_N(j)
\;=\; 2e^{-\gamma} \frac m{N^2} D_N(f_s)\;,
\end{equation*}
which proves \eqref{claim2}.

Set $m = [N e^{\gamma}/4\beta S_n] \wedge N $, where $[a]$ represents
the integer part of $a$. Putting together \eqref{eq2} and
\eqref{claim2} yields
\begin{equation*}
\frac{d}{ds}\< f_s , f_s \>_{m_N} \;\leq\; -D_N(f_s) \;+\; C_0\< f_s ,
f_s \>_{m_N} \;.
\end{equation*}
To conclude the proof it remains to apply Gronwall's inequality.
\end{proof}

Next result shows that the solutions of \eqref{fa1} are monotone.

\begin{lemma}
\label{s02}
Fix $n\ge 1$ and a non-negative initial condition $f_0: \Lambda_N\to
\bb R$ such that $f_0(j) \le f_0(j+1)$, $0\le j <N-1$. Then, the
solution $f_t=f^{(n)}_t$ of \eqref{fa1} conserves the monotonicity:
\begin{equation*}
f_t(j) \;\le\; f_t(j+1)
\end{equation*}
for all $t\ge 0$ and $0\le j <N-1$. Conversely, if the non-negative
initial condition is such that $f_0(j+1) \le e^{- \gamma
n/N}f_0(j)$, $0\le j <N-1$, the same property holds at later times:
\begin{equation*}
f_t(j+1) \;\le\; e^{- \gamma
n/N}\, f_t(j)
\end{equation*}
for all $t\ge 0$ and $0\le j <N-1$.
\end{lemma}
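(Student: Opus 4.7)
The plan is to prove both monotonicity bounds by a discrete maximum principle / Duhamel argument applied to the finite differences $h_t(j) := f_t(j+1) - f_t(j)$ (for the first assertion) and the rescaled differences $g_t(j) := e^{-\gamma n/N} f_t(j) - f_t(j+1)$ (for the second), both indexed by $0 \le j \le N-2$. First I would record the elementary observation that $\Omega_n$, viewed as a matrix on $\Lambda_N$, has non-negative off-diagonal entries, so the semigroup $e^{t\Omega_n}$ is positivity-preserving and $f_t\ge 0$ for all $t\ge 0$ as soon as $f_0 \ge 0$.

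For the first statement, differentiating $h_t(j)$ and using the explicit form \eqref{fg11}, \eqref{fg3} of $\Omega_n$ yields
\begin{equation*}
\partial_t h_t \;=\; L\, h_t \;+\; Q^{(h)}_t,
\end{equation*}
where $L$ acts on functions on $\{0,\dots,N-2\}$ by $(Lu)(j) = N^2 u(j+1) + (N^2+EN)u(j-1) - (2N^2+EN)u(j)$ in the interior $1\le j\le N-3$, by $(Lu)(0) = N^2 u(1) - (2N^2+EN)u(0)$ at $j=0$, and by $(Lu)(N-2) = (N^2+EN)u(N-3) - (2N^2+EN)u(N-2)$ at $j=N-2$; the source is $Q^{(h)}_t(0) = \alpha N R_n f_t(0)$, $Q^{(h)}_t(N-2) = \beta N S_n f_t(N-1)$, and vanishes elsewhere. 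All off-diagonal entries of $L$ are non-negative and all row sums are $\le 0$ (killing at the two endpoints), so $e^{tL}$ preserves non-negativity; combining this with $h_0 \ge 0$ (hypothesis) and $Q^{(h)}_t \ge 0$ (which uses $f_t\ge 0$, $R_n, S_n \ge 0$ and $\alpha,\beta\in(0,1)$), the Duhamel representation $h_t = e^{tL} h_0 + \int_0^t e^{(t-s)L} Q^{(h)}_s\, ds$ gives $h_t \ge 0$.

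For the second statement, set $c := e^{-\gamma n/N} \ge 1$ and use the identities $S_n = N(c-1)$ and $R_n = (N+E)(c-1)/c$, both immediate consequences of $e^{\gamma/N} = (1+E/N)^{-1}$. An analogous computation -- now differentiating $g_t(j)$ and making the substitution $f_t(j+1) = c f_t(j) - g_t(j)$ to rewrite the boundary terms -- produces a system $\partial_t g_t = L' g_t + Q'_t$ of the same structural type: $L'$ coincides with $L$ in the interior and at $j=0$, while at $j=N-2$ its diagonal entry becomes more negative (making the row sum equal to $-\beta N^2 - (1-\beta)cN^2 < 0$), and the source $Q'_t$ is supported at $\{0,N-2\}$ with values $(1-\alpha)(c-1)N(N+E)\, f_t(0)$ and $c(c-1)(1-\beta)N^2\, f_t(N-2)$, both manifestly non-negative since $c\ge 1$ and $\alpha,\beta\in(0,1)$. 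Since $g_0\ge 0$ by the hypothesis $f_0(j+1)\le cf_0(j)$, the same Duhamel argument applied to $L'$ closes the proof.

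The main obstacle is the boundary algebra in the derivation of the $g_t$-equation: a priori the coefficients of $f_t(0)$ and $f_t(N-2)$ in $\partial_t g_t$ at the endpoints are messy combinations of $E$, $c$, $R_n$, $S_n$, and it is not obvious they have a definite sign. The precise identities $R_n = (N+E)(c-1)/c$ and $S_n = N(c-1)$ are exactly what force these coefficients to collapse to the clean non-negative forms $(1-\alpha)(c-1)N(N+E)$ and $c(c-1)(1-\beta)N^2$; this is the arithmetic reason the particular boundary prescription \eqref{fg3} of $\Omega_n$ is the ``right'' one for propagating both monotonicity-type bounds.
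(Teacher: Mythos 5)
Your proposal is correct and follows essentially the same route as the paper: the authors also pass to the (rescaled) first differences, show they solve a linear system driven by a tridiagonal matrix with non-negative off-diagonal entries plus a non-negative boundary source supported on the two endpoints, and conclude by the Duhamel representation together with positivity of the matrix exponential. Your explicit identification of the source terms (via $R_n=(N+E)(c-1)/c$, $S_n=N(c-1)$) matches the paper's, up to the harmless substitution $f_t(N-1)=cf_t(N-2)-g_t(N-2)$ that moves part of the boundary source into the diagonal.
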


\begin{proof}
For $t>0$, $j\in\{1, \dots, N-1\}$, let $g_t(j)=f_t(j)-f_t(j-1)$.  It
is easy to show that $g_t$ satisfies an equation of the form
\begin{equation}
\label{fa7}
\frac{d}{dt} g_t \;=\; \widetilde{\Omega}_n g_t \;+\; \psi_t\; ,
\end{equation}
where all the entries in $\psi_t$ are null except for the first and the
last which are equal to $\alpha N R_n f_t(0)$ and $\beta N S_n
f_t(N-1)$, respectively.

Moreover, $\widetilde{\Omega}_n$ is a tridiagonal matrix whose
diagonal elements are equal to $-N^2(2+E/N)$, upper off-diagonal elements
equal $N^2$ and lower off-diagonal elements are equal to $N^2(1+E/N)$.

We may now apply the maximum principle to conclude the proof of the
first assertion of the lemma because, as already seen, the solution
$f_t$ is non-negative. Alternatively, we can recall the observation
(see \cite[Exercise 97, pag. 375]{rs}) that for any $t>0$ the
exponential $e^{At}$ of a matrix $A$ has all its entries positive if
and only if all the off-diagonal elements of $A$ are
non-negative. Since that holds for $\widetilde{\Omega}_n$ and
$\Omega_n$, then $g_t$, which can be written as
\begin{equation*}
g_t \;=\; e^{\widetilde{\Omega}_n t} \, g_0
\;+\; \int_0^t e^{\widetilde{\Omega}_n(t-s)} \psi_s\,\ ds\;, 
\end{equation*}
is non-negative.

The same argument applies to the second assertion.  For $t>0$,
$j\in\{1, \dots, N-1\}$, let $g_t(j)= e^{- \gamma n/N}
f_t(j-1)-f_t(j)$.  Then, $g_t$ satisfies the equation \eqref{fa7}
where all the entries in $\psi_t$ are null except for the first and the
last which are equal to $N(N+E)(1-\alpha)(e^{-\gamma n/N}-1) f_t(0)$
and $N^2(1-\beta)(e^{-\gamma n/N}-1) f_t(N-1)$, respectively.
\end{proof}

\begin{lemma}
\label{s05}
Fix $n\ge 1$ and let $f_t = f^{(n)}_t$ be the solution of
\eqref{fa1}. There exists a finite constant $C_0$, depending only on
$E$, $\beta$ and $n$, such that for any $t\geq 0$
\begin{equation*}
\Vert f_t\Vert_M \;\le\; C_0 \, e^{C_0 t}  \Vert f_0\Vert_M \;.
\end{equation*}
for all $t\ge 0$.
\end{lemma}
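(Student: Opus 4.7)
The plan is to deduce the maximum-norm bound from the $L^2$ estimate of Lemma \ref{s06}, combined with the two pieces of monotonicity information in Lemma \ref{s02}. The key idea is to compare an arbitrary non-negative solution with the solution $v_t := e^{\Omega_n t}\mathbf{1}$ starting from the constant function $\mathbf{1}$, which by Lemma \ref{s02} turns out to have all of its values within a bounded ratio of each other; this converts an $L^2$ estimate on $v_t$ into an $L^\infty$ estimate essentially for free.

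First, I would reduce to non-negative initial data. Writing $f_0 = f_0^+ - f_0^-$ and using the positivity-preservation of equation \eqref{fa1} noted just before Lemma \ref{s06}, both $e^{\Omega_n t} f_0^\pm$ are non-negative. Since $0\le f_0^\pm \le \Vert f_0\Vert_M\, \mathbf{1}$ pointwise and positivity-preservation implies order-preservation on the non-negative cone, each piece is dominated pointwise by $\Vert f_0\Vert_M\, v_t$. Therefore
\[
\Vert f_t\Vert_M \;\le\; 2\, \Vert f_0\Vert_M\, \Vert v_t\Vert_M,
\]
and it suffices to prove $\Vert v_t\Vert_M \le C_0 e^{C_0 t}$.

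Second, I would apply Lemma \ref{s02} to $v_0 = \mathbf{1}$. The constant function trivially satisfies $v_0(j) \le v_0(j+1)$, and since $\gamma\le 0$ it also satisfies $v_0(j+1) = 1 \le e^{-\gamma n/N}\, v_0(j)$. Both conclusions of Lemma \ref{s02} therefore propagate to every $t\ge 0$, giving
\[
v_t(j) \;\le\; v_t(j+1) \;\le\; e^{-\gamma n/N}\, v_t(j)
\qquad\text{for all } 0\le j< N-1.
\]
Iterating the right inequality, $v_t(N-1)\le e^{-\gamma n}\, v_t(0)$, so every value of $v_t$ lies in $[v_t(0),\, e^{-\gamma n} v_t(0)]$.

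Third, I would convert the $L^2$ bound of Lemma \ref{s06} into an $L^\infty$ bound. Using the pointwise lower bound $v_t(j) \ge v_t(0)$ and the fact that $m_N(j) \ge e^{\gamma}$,
\[
\Vert v_t\Vert_2^2 \;\ge\; v_t(0)^2 \sum_{j=0}^{N-1} m_N(j) \;\ge\; c_1 N\, v_t(0)^2,
\]
for some $c_1>0$ depending only on $E$ (by explicit computation of the geometric sum). Meanwhile $\Vert v_0\Vert_2^2 = \sum_j m_N(j) \le c_2 N$, so Lemma \ref{s06} applied to $v_t$ gives $\Vert v_t\Vert_2^2 \le e^{C_0 t}\, \Vert v_0\Vert_2^2 \le c_2 N\, e^{C_0 t}$. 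Combining these, $v_t(0)^2 \le (c_2/c_1) e^{C_0 t}$, and the two-sided comparison of Step~2 yields $\Vert v_t\Vert_M = v_t(N-1) \le e^{-\gamma n} v_t(0) \le C_0 e^{C_0 t/2}$. Plugging back into Step~1 finishes the proof.

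The only real observation to be made is in Step~2: the constant function satisfies both monotonicity hypotheses of Lemma \ref{s02} simultaneously, so that $v_t$ is pinched between $v_t(0)$ and $e^{-\gamma n} v_t(0)$ at every site. Once this is noticed, the remainder is a clean cancellation of the factors of $N$ between the lower bound $\Vert v_t\Vert_2^2 \gtrsim N\, v_t(0)^2$ and the upper bound $\Vert v_0\Vert_2^2 \lesssim N$; no heat-kernel, hypercontractivity, or spectral perturbation machinery is required for this particular lemma.
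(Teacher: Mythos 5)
Your proof is correct and follows essentially the same route as the paper's: compare with the solution started from a constant profile via the maximum principle, use Lemma \ref{s02} to pinch that solution between $v_t(0)$ and $e^{-\gamma n}v_t(0)$, and convert the $L^2$ bound of Lemma \ref{s06} into a sup-norm bound using $m_N(j)\ge e^{\gamma}$. The only cosmetic difference is your splitting into positive and negative parts (costing a harmless factor of $2$), where the paper simply invokes the maximum principle to get $f_t(j)^2\le g_t(j)^2$ directly.
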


\begin{proof}
Let $g_0$ be the function which is constant and equal to $\Vert
f_0\Vert_M$ and denote by $g_t$ the solution of \eqref{fa1} with
initial condition $g_0$. By the maximum principle, 
$f_t(j)^2 \le g_t(j)^2$, for all $1\le j\le N$, $t\geq 0$. 

By Lemma \ref{s02}, $e^{n\gamma} g_t(k) \le g_t(j) \le e^{-n\gamma}
g_t(k)$ for all $0\le j, k\le N-1$, $t\ge 0$, which together with
$m_N(j)\geq e^{\gamma}$, $0\le j\le N-1$, gives that $\Vert
g_t\Vert^2_M \le e^{-(2n+1)\gamma} N^{-1} \Vert g_t \Vert^2_2$.  By
Lemma \ref{s06}, $\Vert g_t \Vert^2_2 \le e^{C_0 t} \Vert g_0
\Vert^2_2$.  In conclusion,
\begin{equation*}
f_t(j)^2 \;\le\; C_0 e^{C_0 t} N^{-1} \Vert g_0
\Vert^2_2 \;\le\; C_0 e^{C_0 t} \Vert g_0 \Vert^2_M \;=\;
C_0 e^{C_0 t} \Vert f_0 \Vert^2_M\;,
\end{equation*}
which proves the lemma.
\end{proof}

Fix $n\ge 1$ and denote by $q_t (j, \cdot) = q^{(n)}_t(j,\cdot)$ the
solution of the linear equation \eqref{fa1} with initial condition
$q_0(j,k) = \delta_{j,k}$.  Fix a function $f: \Lambda_N \to \bb
R$. We may represent the solution $f_t$ of \eqref{fa1} with initial
condition $f$ as $f_t(k) = \sum_{j\in \Lambda_N} f(j) q_t(j,k)$. In the
particular case where $f(k)=1$ for all $k\in \Lambda_N$, by Lemma
\ref{s05}, 
\begin{equation*}
\max_{k\in \Lambda_N} \sum_{j\in \Lambda_N} q_t(j,k) \;=\; 
\max_{k\in \Lambda_N} f_t(k) \;\le\; C_0 e^{C_0 t}\;. 
\end{equation*}

\begin{lemma}
\label{s08}
Fix $n\ge 1$, a strictly positive initial condition $f_0: \Lambda_N
\to \bb R$ and let $f_t$ be the solution of \eqref{fa1}.  For every
$T>0$, there exists a positive constant $c_0$, depending only on
$f_0$, $E$, $\alpha$, $\beta$ and $T$, such that
\begin{equation*}
c_0 \;\le\; f_t(j)
\end{equation*}
for all $0\le t\le T$, $j\in \Lambda_N$.
\end{lemma}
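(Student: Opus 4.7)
The plan is to reduce to a constant initial condition and then establish a lower bound via a Duhamel expansion around a Markov semigroup. Let $\delta := \min_{j \in \Lambda_N} f_0(j) > 0$. Since the off-diagonal entries of the matrix $\Omega_n$ are all non-negative, $e^{\Omega_n t}$ preserves positivity, so by linearity $f_t(j) \ge \delta\, G_t(j)$ pointwise, where $G_t$ solves \eqref{fa1} with $G_0 \equiv 1$. It therefore suffices to prove $\min_j G_t(j) \ge c(T)>0$ uniformly in $N$ for $t \in [0,T]$.

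Write $\Omega_n = \Omega_0 + V_n$, where $\Omega_0$ is the operator defined by \eqref{fg11}, \eqref{fg3} with $n=0$. Since $R_0 = S_0 = 0$, $\Omega_0$ is the Markov generator of a weakly asymmetric random walk on $\Lambda_N$ reflected at both endpoints and speeded up by $N^2$; in particular $\Omega_0 \mathbf{1} = 0$, so $e^{\Omega_0 t}\mathbf{1} = \mathbf{1}$. The perturbation $V_n := \Omega_n - \Omega_0$ is supported on $\{0, N-1\}$ with $(V_n g)(0) = -\alpha N R_n\, g(0)$ and $(V_n g)(N-1) = \beta N S_n\, g(N-1)$. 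Denoting by $p_s(j,k)$ the stochastic transition kernel of $e^{\Omega_0 s}$, Duhamel's formula yields
\begin{equation*}
G_t(j) \;=\; 1 \;-\; \alpha N R_n \int_0^t p_{t-s}(j,0)\, G_s(0)\, ds
\;+\; \beta N S_n \int_0^t p_{t-s}(j,N-1)\, G_s(N-1)\, ds.
\end{equation*}
Dropping the non-negative last term, combining with $\Vert G_s\Vert_M \le C_0 e^{C_0 s}$ from Lemma \ref{s05}, and using that $R_n \to nE$ stays bounded as $N\to\infty$, one obtains, via a heat-kernel estimate of the form $\sup_{j} \int_0^t p_{t-s}(j,0)\,ds \le C_1\sqrt{t}/N$,
\begin{equation*}
G_t(j) \;\ge\; 1 \;-\; C_2\, e^{C_0 T}\,\sqrt{t}.
\end{equation*}

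Fix $t_\star = t_\star(T,E,\alpha,\beta,n) > 0$ small enough that $C_2 e^{C_0 T}\sqrt{t_\star} \le 1/2$, so that $\min_j G_{t_\star}(j) \ge 1/2$. To extend to $[0,T]$, note that if $\min_j G_s(j) \ge m$, then $G_s - m\mathbf{1} \ge 0$ pointwise and by linearity and positivity of $e^{\Omega_n \tau}$,
\begin{equation*}
G_{s+\tau}(j) \;\ge\; m\, G_\tau(j) \;+\; \big(e^{\Omega_n \tau}(G_s - m\mathbf{1})\big)(j) \;\ge\; m\, G_\tau(j).
\end{equation*}
Applying this with $\tau = t_\star$, induction gives $\min_j G_{k t_\star}(j) \ge 2^{-k}$, and the same reduction within each interval $[k t_\star,(k+1)t_\star]$ yields $\min_j G_t(j) \ge 2^{-\lceil T/t_\star\rceil - 1}$ for all $t \in [0,T]$. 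Setting $c_0 := \delta\cdot 2^{-\lceil T/t_\star\rceil - 1}$ concludes the proof.

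The main obstacle is the heat-kernel estimate $\sup_{j}\int_0^t p_{t-s}(j,0)\,ds \le C_1\sqrt{t}/N$ for the reflected weakly asymmetric walk generated by $\Omega_0$: this is a Corollary \ref{s03}-type statement adapted to $\Omega_0$, following from the local-CLT bound $p_r(j,k) \le C/(N\sqrt{r})$ for $r \ge 1/N^2$ (boundary effects being harmless since $\Omega_0$ is stochastic) together with the trivial bound $p_r \le 1$ on $r \le 1/N^2$; integrating in time yields the $\sqrt{t}/N$ bound. Everything else---the Duhamel decomposition, Lemma \ref{s05}, and the geometric iteration---is routine.
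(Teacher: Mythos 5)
Your argument is correct, but it follows a genuinely different route from the paper's. Both proofs begin with the same reduction: positivity of $e^{\Omega_n t}$ (non-negative off-diagonal entries) lets you replace $f_0$ by the constant profile $\min_j f_0(j)$. After that the paper stays entirely elementary: for a constant initial profile it differentiates the weighted average $N^{-1}\sum_j f_t(j)m_N(j)$, observes that the only negative boundary contribution $-\alpha R_n f_t(0)m_N(0)$ is controlled because Lemma \ref{s02} forces $f_t$ to be monotone in $j$ (so $f_t(0)$ is below the average), applies Gronwall to bound the average from below, and then uses the two-sided comparability $e^{n\gamma}f_t(k)\le f_t(j)\le e^{-n\gamma}f_t(k)$ from Lemma \ref{s02} to convert the lower bound on the average into a lower bound on the minimum. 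Your proof instead treats $\Omega_n$ as a perturbation of the conservative reflected walk $\Omega_0$, writes a Duhamel formula, drops the favorable boundary term, and controls the unfavorable one through the on-diagonal heat-kernel decay $p_r(j,0)\le C/(N\sqrt{r})$, yielding a short-time lower bound $G_t\ge 1-C\sqrt{t}$ which you then iterate geometrically via the semigroup property. Your approach is sound --- the needed kernel bound is exactly Corollary \ref{s03} in the degenerate case $\alpha=\beta=0$, where the proof of Lemma \ref{s04} simplifies since $A_1=0$ --- but it imports the hypercontractivity machinery of Section \ref{sec11} into a statement the paper proves with only Gronwall and monotonicity; what it buys in exchange is robustness: the Duhamel-plus-iteration scheme does not use the order-preservation Lemma \ref{s02} at all (only positivity), so it would survive perturbations of $\Omega_n$ that destroy monotonicity, and it makes transparent that the loss over a time interval of length $t_\star$ is only a fixed multiplicative factor.
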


\begin{proof}
By the maximum principle, it is enough to prove the lemma for a
constant initial profile. Assume, therefore, that $f_0(j)=a$ for all
$j\in \Lambda_N$ and for some $a>0$. A simple computation shows that
\begin{equation*}
\frac d{dt} \frac 1N \sum_{j=0}^{N-1} f_t(j) \, m_N(j)
\;=\;  \frac 1N \sum_{j=0}^{N-1} (\Omega_n f_t)(j) \, m_N(j)
\;\geq \; -\, \alpha R_n  f_t(0) \, m_N(0)\;.
\end{equation*}
By Lemma \ref{s02}, $f_t(0) \le N^{-1} \sum_{0\le j\le N-1}
f_t(j)$. On the other hand, $m_N(0) = 1 \le m_N(j) e^{-\gamma}$ for
all $j\in \Lambda_N$. Hence,
\begin{equation*}
\frac d{dt} \frac 1N \sum_{j=0}^{N-1} f_t(j) \, m_N(j)
\;\ge\; -\, \alpha R_n  e^{-\gamma} \frac 1N \sum_{j=0}^{N-1} f_t(j) \,
m_N(j)\;. 
\end{equation*}
Therefore, by Gronwall's inequality and since $R_n$ is bounded above
by a finite constant independent of $N$,
\begin{equation*}
\frac 1N \sum_{j =0}^{ N-1} f_t(j) \, m_N(j) \;\ge\; 
e^{-At} \frac 1N \sum_{ j=0}^{N-1} f_0(j) \, m_N(j) 
\; \geq \; a e^{\gamma}e^{-At}.  
\end{equation*}
A constant profile satisfies both conditions of Lemma \ref{s02}. We
may therefore apply this lemma to bound above $N^{-1} \sum_{j\in
  \Lambda_N} f_t(j)$ by $C_0 \min_{k\in \Lambda_N} f_t(k)$, which
completes the proof since $ m_N(j)\le 1$.
\end{proof}

Next result provides a bound for the fundamental solution of
\eqref{fa1}. The proof is based on the classical arguments of
hypercontractivity \cite{da1, ds96}. We need, however, to estimate
additional terms which appear because $\Omega_n$ is not a generator. 

For $\epsilon >0$, let $\delta = \epsilon/(1+\epsilon)$,
and let $\varphi_\epsilon : [0,1] \to [\delta, 1-2\epsilon]$ be given
by
\begin{equation*}
\varphi_\epsilon (t) \;=\; 
\begin{cases}
\sqrt{\delta^2 + t}\;, & \text{for $0\le t\le 1/8$,} \\
1 - \sqrt{4 \epsilon^2 + 1-t}\;, & \text{for $7/8\le t\le 1$.}
\end{cases}
\end{equation*}
We complete the definition of $\varphi_\epsilon$ in the interval
$[1/8, 7/8]$ in a way to obtain an increasing $C^1$ function whose
derivative in the interval $[1/8, 7/8]$ is bounded by $2$. Note that
this bound is compatible with $\varphi_\epsilon'(1/8)$ and
$\varphi_\epsilon'(7/8)$, which are both bounded by $\sqrt{2}$.

Actually, the exact form of $\varphi_\epsilon$ is irrelevant for the
proof of Lemma \ref{s04}. The only important properties needed are
that
\begin{equation*}
\int_0^1 \frac 1{\varphi_\epsilon(t) [1-\varphi_\epsilon(t)]} \, dt 
\;<\; \infty\;, \quad\text{and}\quad 
\int_0^1 \dot \varphi_\epsilon(t)  \, \log 
\frac{ \dot \varphi_\epsilon(t)}{\varphi_\epsilon(t) [1-\varphi_\epsilon(t)]} \, dt \;<\;
\infty\; ,
\end{equation*}
where $\dot \varphi_\epsilon(t)$ represents the derivative of
$\varphi_\epsilon$. 

\begin{lemma}
\label{s04}
Fix $n\ge 1$ and recall that we denote by $q_t (j, \cdot)$ the
solution of the linear equation \eqref{fa1} with initial condition
$q_0(j,k) = \delta_{j,k}$.  Assume that $N\ge n+1$ and let $A_1=
-\gamma n \beta$. There exists a finite constants $C_0$, depending
only on $E$, $\beta$ and $n$, such that
\begin{equation*}
\max_{0\le j,k\le N-1} q_T(j,k) \;\le\; \frac {C_0 e^{C_0
    T}}{\sqrt{ N^2 T }}
\end{equation*}
for all $T$ such that
\begin{equation}
\label{fa3}
\log (TN^2) \;\ge\; 16\;, \quad
\log (TN^2) \le\; \sqrt{\frac {TN^2}{8 A_0}} \;,\quad
\log (TN^2) \;\le\; N \Big( 1 \wedge \frac 1{8e^E A_1}\Big) \;.
\end{equation}
where $A_0$ is given in \eqref{fa2}.
\end{lemma}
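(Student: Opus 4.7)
The plan is to prove Lemma \ref{s04} by adapting the classical Davies-style hypercontractivity argument to the operator $\Omega_n$, which is a boundary perturbation of the self-adjoint (in $L^2(m_N)$) reflected-walk generator $\Omega_0$. Since $\Omega_n$ is self-adjoint in $L^2(m_N)$ and $m_N$ is comparable to the counting measure (bounded below by $e^{\gamma}$ and above by $1$), the pointwise kernel bound $\max_{j,k} q_T(j,k) \le C_0 e^{C_0 T}/\sqrt{N^2 T}$ is equivalent, up to a factor of order $e^{C_0 T}$, to an ultracontractive bound of the form $\|e^{T\Omega_n}\|_{L^1(m_N)\to L^\infty(m_N)} \lesssim N(N^2 T)^{-1/2}$. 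By self-adjointness and the semigroup property, it is enough to prove $\|e^{T\Omega_n/2}\|_{L^2(m_N)\to L^\infty(m_N)} \lesssim (N^2 T)^{-1/4}$.

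For a nonnegative initial datum $f_0$ I let $u_s = e^{s\Omega_n}f_0$, which remains nonnegative by Lemma \ref{s02}, and introduce a time-varying exponent $p(s)$ with $p(0)=2$ and $p(T/2)=\infty$, parameterized through $\varphi_\epsilon$ so that $\dot p(s)/p(s)^2$ matches $\dot\varphi_\epsilon(2s/T)/T$ up to multiplicative constants. Differentiating and summing by parts (in the spirit of the computation in the proof of Lemma \ref{s06}) gives
\[
\frac{d}{ds}\log\|u_s\|_{p(s)} \;=\; \frac{\dot p}{p^2}\,\frac{\mathrm{Ent}_p(u_s)}{\|u_s\|_p^p} \;-\; c_p\,\frac{D_N(u_s^{p/2})}{\|u_s\|_p^p} \;+\; \Phi_p(u_s),
\]
where $\mathrm{Ent}_p(u) = \int u^p\log(u^p/\|u\|_p^p)\,dm_N$, $c_p$ is comparable to $4(p-1)/p^2$, and $\Phi_p(u_s)$ collects the boundary contributions produced by $\Omega_n - \Omega_0$. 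Applying the log-Sobolev inequality \eqref{fa2} to $u_s^{p/2}/\|u_s\|_p^{p/2}$ controls $\mathrm{Ent}_p(u_s)/\|u_s\|_p^p$ by $A_0\, D_N(u_s^{p/2})/\|u_s\|_p^p$, so that choosing the schedule $p(s)$ to satisfy $A_0 \dot p \le c_p p^2$ absorbs the entropy term into the Dirichlet form; the first smallness condition $\log(TN^2)\le\sqrt{TN^2/(8A_0)}$ in \eqref{fa3} encodes exactly this exponential-growth schedule for $p$.

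The boundary term $\Phi_p$ has a benign piece at $j=0$ (sign $-\alpha N R_n u_s(0)^p m_N(0)$) and a dangerous piece at $j=N-1$ of size $\beta N S_n u_s(N-1)^p m_N(N-1) \lesssim A_1 N u_s(N-1)^p$. To absorb this, I use the $L^p$-analogue of the trace estimate \eqref{claim2}, applied to $u_s^{p/2}$, with the free parameter $m$ chosen of the order $N e^\gamma/(4\beta S_n)\wedge N$ — exactly as in the proof of Lemma \ref{s06} — so that the $D_N(u_s^{p/2})$ portion of the bound is absorbed into the main Dirichlet term and only a residual $C\|u_s\|_p^p$ contribution survives. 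Integrating the resulting differential inequality $\frac{d}{ds}\log\|u_s\|_{p(s)}\le C_0$ from $0$ to $T/2$, and using the boundedness of $\int_0^1 \dot\varphi_\epsilon/[\varphi_\epsilon(1-\varphi_\epsilon)]\,dt$ and of $\int_0^1 \dot\varphi_\epsilon\log[\dot\varphi_\epsilon/(\varphi_\epsilon(1-\varphi_\epsilon))]\,dt$ noted just before the lemma, yields $\|u_{T/2}\|_\infty \le C e^{C_0 T}(N^2T)^{-1/4}\|f_0\|_{L^2(m_N)}$, from which the kernel bound follows by duality and one further semigroup composition.

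The main obstacle is controlling the boundary contribution $\Phi_p(u_s)$ uniformly along the entire schedule, in particular near the endpoint where $p(s)\to\infty$: the $L^p$-version of \eqref{claim2} degrades with $p$, and the choice of $m$ must simultaneously keep the Dirichlet absorption valid and the residual $\|u_s\|_p^p$ term small. The remaining two hypotheses in \eqref{fa3}, namely $\log(TN^2)\ge 16$ and $\log(TN^2)\le N/(8 e^E A_1)$, are precisely what is needed to guarantee this uniform absorption (the first ensuring $p(s)$ reaches $\infty$ well inside $[0,T/2]$, the second ensuring $A_1 N$ is comparatively harmless on the relevant scale).
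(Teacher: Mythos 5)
Your overall strategy is the same Davies--type hypercontractivity argument the paper uses (time-varying exponent $p(s)$ driven by $\varphi_\epsilon$, localized logarithmic Sobolev inequality \eqref{fa2} to absorb the entropy into the Dirichlet form, and a trace inequality in the spirit of \eqref{claim2} to absorb the boundary term at $j=N-1$), with one structural difference: you run $L^2(m_N)\to L^\infty$ on $[0,T/2]$ and dualize, whereas the paper makes a single pass from $L^{p(0)}$ with $p(0)=1+\epsilon$ (where the delta initial datum has norm at most $1$) up to $p(T)=\tfrac12\log(TN^2)$.

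That difference is where your argument has a genuine gap. The boundary term is absorbed by choosing an averaging window of $m(s)\asymp N\,\frac{p(s)-1}{p(s)^2}\,\bigl(\frac{1}{2e^EA_1}\wedge 4\bigr)$ sites near $N-1$: the trace inequality itself does not degrade with $p$, but the Dirichlet form available to swallow the $D_N$ piece carries the coefficient $(p-1)/p^2$, so the window must shrink like $N/p$, and it must still contain at least one lattice site. This forces $p(s)\lesssim N\bigl(1\wedge\frac{1}{8e^EA_1}\bigr)$ along the whole schedule. Your schedule sends $p(s)\to\infty$ at $s=T/2$, where $m(s)\to 0<1$ and the absorption mechanism fails outright; the hypotheses \eqref{fa3} do \emph{not} rescue this --- the third condition only guarantees $m(s)\ge 1$ for $p(s)\le\log(TN^2)$, which is exactly why the paper stops the schedule at $p(T)=\tfrac12\log(TN^2)$ rather than at $\infty$, and then converts $\Vert f_T\Vert_{p(T)}$ into $\Vert f_T\Vert_M$ at the bounded cost $e^{E/p(T)}\le e^{E/2}$ using $m_N(k)\ge e^{\gamma}$. (Relatedly, the role you assign to $\log(TN^2)\ge 16$ is off: it makes $\epsilon$ small enough that $p(T)\ge 8$, so that the final $L^{p(T)}\to L^\infty$ conversion and the various $\delta$-dependent bounds work; and the second condition in \eqref{fa3} is what makes the localization scale $\ell(t)$ in the log-Sobolev step at least one site, not the growth schedule of $p$ per se.) To repair your proof you would have to cap the exponent at a level commensurate with $N$ and $\log(TN^2)$ and finish by the same norm-comparison trick, at which point you have reproduced the paper's argument.
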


\begin{proof}
Here we follow \cite{l1,l2}. In this proof $C_0$ represents a finite
constant depending only on $\beta$, $E$ and $n$, which may change from
line to line.

Fix $0\le k\le N-1$ and $T$ in the range \eqref{fa3}.  Let
$\epsilon^{-1} = \log (TN^2)$, $p: [0,T] \to [1+ \epsilon ,
2\epsilon^{-1}]$ be given by $p(t) = [1-\varphi_\epsilon
(t/T)]^{-1}$. Set $f_t(\cdot) = q_t(x,\cdot)$, $u_t^2 = f_t^{p(t)}$,
$v_t^2 = u_t^2 / \Vert u_t \Vert_2^2$. An elementary computation,
identical to the one presented at the beginning of the proof of
Theorem 2.1 in \cite{l1}, gives that
\begin{equation}
\label{fa5}
\frac{d}{dt} \log \|f_t\|_{p(t)} \;\le \;
\frac{\dot p(t)}{p(t)^2} \int v_t^2 \log v_t^2\,dm_N
- \frac{2[p(t)-1]}{p(t)^2}\, D_N(v_t) \;+\; A_1 N v_t(N-1)^2\;.
\end{equation}

Set
\begin{equation*}
\ell (t)^2 \;=\; N^2 \Big\{ \frac {p(t) -1}{4 A_0 \dot p(t)} \wedge 1
\Big\} \;=\; \frac {T N^2}{A_0}
\Big\{ \frac {\varphi_\epsilon(t/T) [1-\varphi_\epsilon(t/T)]}{ 4 \dot
  \varphi_\epsilon(t/T)} \wedge \frac{A_0}T \Big\} \;.
\end{equation*}
By the second condition in \eqref{fa3}, $\ell(t) \ge 1$.  Divide the
interval $\Lambda_N$ in subintervals of length $\ell(t)$. The last
interval has length between $\ell(t)$ and $2\ell(t)-1$.  By the
logarithmic Sobolev inequality \eqref{fa2} and by the the proof of
Lemma 4.3 of \cite{l1}, since $m_N(k) \ge e^\gamma$, the first term on
the right hand side of \eqref{fa5} is less than or equal to
\begin{equation*}
\frac{\dot p(t)}{p(t)^2} \Big\{ A_0 \frac {4 \ell(t)^2}{N^2} D_N(v_t) \;
-\; \log [ e^\gamma \ell (t)] \Big\}\; \cdot
\end{equation*}
By definition of $\ell (t)$, the right hand side of \eqref{fa5} is
bounded by
\begin{equation}
\label{fa4}
- \frac{\dot p(t)}{2 p(t)^2} \log [e^{2\gamma}\ell (t)^2]
\;-\; \frac{[p(t)-1]}{p(t)^2}\, D_N(v_t) \;+\; A_1 N v_t(N-1)^2\;.
\end{equation}

Let
\begin{equation*}
m(t) \;=\; N \frac{ p(t)-1}{p(t)^2} \Big\{
\frac 1 {2 e^E A_1} \wedge 4 \Big\}
\;=\; N  \varphi_\epsilon(t/T) [1-\varphi_\epsilon(t/T)] \Big\{
\frac 1 {2 e^E A_1} \wedge 4 \Big\}\;.
\end{equation*}
Notice that $m(t) \le N$, because $0\le p(t)^{-1} \le 1$. On the other
hand, as $p(t)^{-1}[1-p(t)^{-1}] \ge \{4 \log (TN^2)\}^{-1}$ and $N
\ge \log (TN^2) \{ 8 e^E A_1 \vee 1\}$, we have that $m(t) \ge
1$. Adding and subtracting the average of $v_t(j)$ over the interval
$\{N-m(t), \dots , N-1\}$, and repeating the same argument as in the
proof of Lemma \ref{s06}, since $-\gamma \le E$, we obtain that
\begin{equation*}
\begin{split}
v_t(N-1)^2&\leq
2 m(t) \sum_{j=N-m(t)}^{N-2} \{ v_t(j+1) - v_t(j)\}^2 \; +\; \frac
2{m(t)} \sum_{j=N-m(t)}^{N-1} v_t(j)^2 \\
 &\le\;  \frac{2 e^E m(t)}{N^2} D_N(v_t) \;+\; \frac {2 e^E}{ m(t)}
\end{split}
\end{equation*}
because $\Vert v(t)\Vert_2 =1$. By definition of $m(t)$, the first
term of this expression multiplied by $A_1 N$ may be absorbed by the
Dirichlet form in \eqref{fa4}. Hence, \eqref{fa4} is less than or
equal to
\begin{equation*}
- \frac{\dot p(t)}{2 p(t)^2} \log [e^{2\gamma} \ell (t)^2]
\;+\; C_0 \frac{p(t)^2}{p(t)-1}\; \cdot
\end{equation*}

Up to this point, we proved that
\begin{equation}
\label{fa6}
\log \Big(\frac{\|f_T\|_{p(T)}}{\|f_0\|_{p(0)}} \Big)\;\le \;
-  \int_0^T \frac{\dot p(t)}{2 p(t)^2} \log [e^{2\gamma} \ell (t)^2] \, dt
\;+\; C_0 \int_0^T \frac{p(t)^2}{p(t)-1}\, dt \; \cdot
\end{equation}
Since $\dot p(t)/p(t)^2 = T^{-1} \dot \varphi_\epsilon (t/T)$, in view
of \eqref{fa3}, the first term on the right hand side is less than or
equal to
\begin{equation*}
-\; \frac 12 \log (TN^2) \;+\; C_0 \;+\; \frac 12 \int_0^1 \dot
\varphi_\epsilon (t) \log \Big\{ \frac {\dot  \varphi_\epsilon(t)}
{\varphi_\epsilon(t) [1-\varphi_\epsilon(t)]} \vee \frac T{4A_0}
\Big\} \, dt\;. 
\end{equation*}
Since $\log (a \vee b) \le \log_+ a + \log_+ b$, where $\log_+ a =
\log a \vee 0$, the previous integral can be estimated by the sum of
two terms. The first one is $\log_+ (T/4A_0) \le C_0 T$, while the
second one is
\begin{equation*}
\frac 12 \int_0^1 \dot \varphi_\epsilon (t) 
\log_+ \Big\{ \frac {\dot  \varphi_\epsilon(t)}
{\varphi_\epsilon(t) [1-\varphi_\epsilon(t)]} \Big\} \, dt\;. 
\end{equation*}
On the interval $[1/8,7/8]$, $\dot \varphi_\epsilon(t)$ is bounded by
$2$ and $\varphi_\epsilon(t) [1-\varphi_\epsilon(t)]$ is bounded below
by a positive constant independent of the parameters. On the other
hand, on the interval $[0,1/8]$, in view of \eqref{fa3}, $\dot
\varphi_\epsilon(t)/\{\varphi_\epsilon(t) [1-\varphi_\epsilon(t)]\}
\ge [\delta^2 + t]^{-1} \ge 1$. Hence, in this interval, the previous
integral is bounded by
\begin{equation*}
\frac 14 \int_0^{1/8} \frac 1{\sqrt{\delta^2 + t}}  
\log \frac 1 {\delta^2 + t} \, dt \;\le\; C_0\;. 
\end{equation*}
A similar analysis can be carried out in the interval $[7/8,1]$.

The second term on the right hand side of \eqref{fa6} is equal to
\begin{eqnarray*}
C_0 T \, \int_0^1 \frac{1}{\varphi_\epsilon(t) [1-\varphi_\epsilon(t)]} \,
dt \;\le\; C'_0 T\;. 
\end{eqnarray*}
Therefore,
\begin{equation*}
\log\Big( \frac{\|f_T\|_{p(T)}}{\|f_0\|_{p(0)}} \Big)\;\le \;
-\; (1/2) \log \{ N^2 T \} \;+\; C_0 \; +\; C_0 T \;.
\end{equation*}
To conclude the proof of the lemma, it remains to observe that $\Vert
f_T \Vert_M \le e^{E/2}$ $\Vert f_T \Vert_{p(T)}$, $\Vert f_0
\Vert_{p(0)} \le 1$.
\end{proof}

\begin{corollary}
\label{s03}
Fix $n\ge 1$, $T_0>0$, and denote by $q_t (j, \cdot)$ the solution of
the linear equation \eqref{fa1} with initial condition $q_0(j,k) =
\delta_{j,k}$.  There exist a finite constant $C_0$ and $N_0\ge 1$,
depending only on $E$, $\beta$ and $n$, such that
\begin{equation*}
q_t(j,k) \;\le\; \frac {C_0 e^{C_0 t}}
{\sqrt{ N^2 t}}
\end{equation*}
for all $0\le t\le T_0$, $N\ge N_0$, and $0\le j,k\le N-1$.
\end{corollary}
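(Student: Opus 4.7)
The plan is to derive Corollary \ref{s03} from Lemma \ref{s04} by removing the technical restrictions \eqref{fa3} via a dichotomy on the size of $tN^2$. Lemma \ref{s04} already supplies the target bound $q_t(j,k) \le C_0 e^{C_0 t}/\sqrt{N^2 t}$, but only when the three conditions \eqref{fa3} are in force. So the task reduces to verifying these conditions in the ``main'' regime and handling the complementary regime by a crude uniform estimate, exploiting the observation that when $tN^2$ is bounded, the factor $1/\sqrt{N^2 t}$ on the right hand side is itself bounded below.

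First, I would pick a threshold $K_0 \ge e^{16}$ large enough that $(\log x)^2 \le x/(8 A_0)$ for every $x \ge K_0$; this is possible since $\log x = o(\sqrt{x})$. Next, I would choose $N_0 \ge 1$, depending on $T_0$, $E$, $\beta$, $n$, so that $\log(T_0 N^2) \le N\bigl(1 \wedge 1/(8 e^{E} A_1)\bigr)$ for all $N \ge N_0$. With these choices, for any $N \ge N_0$ and any $t$ with $K_0/N^2 \le t \le T_0$, substituting $T=t$ makes all three conditions in \eqref{fa3} hold: condition (i) because $tN^2 \ge K_0 \ge e^{16}$; condition (ii) by the defining property of $K_0$; and condition (iii) because $\log(tN^2) \le \log(T_0 N^2)$ and $N \ge N_0$. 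Lemma \ref{s04} then yields the claimed bound throughout this range.

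For the remaining range $0 \le t < K_0/N^2$, Lemma \ref{s04} is unavailable, but $q_t(j,\cdot)$ is the solution of \eqref{fa1} with initial datum $\delta_j$, whose sup norm equals $1$. Lemma \ref{s05} therefore gives $q_t(j,k) \le C_0 e^{C_0 t}$. Since $tN^2 \le K_0$ in this regime, we have $1 \le \sqrt{K_0}/\sqrt{N^2 t}$, and hence
\[
q_t(j,k) \;\le\; C_0 e^{C_0 t} \;\le\; \frac{C_0 \sqrt{K_0}\, e^{C_0 t}}{\sqrt{N^2 t}},
\]
which is the desired form after adjusting the numerical constant. The only (mild) obstacle is bookkeeping: ensuring that the thresholds $K_0$ and $N_0$ are chosen compatibly so that the two regimes overlap and exhaust $(0, T_0]$ for all $N \ge N_0$. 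No new analytic input is needed beyond Lemmas \ref{s04} and \ref{s05}.
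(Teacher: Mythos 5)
Your proposal is correct and follows essentially the same route as the paper: split on whether $tN^2$ exceeds a fixed threshold, apply Lemma \ref{s05} together with the trivial lower bound on $\sqrt{N^2 t}$ in the small regime, and verify the three conditions \eqref{fa3} so that Lemma \ref{s04} applies in the large regime. The bookkeeping of thresholds you describe matches the paper's choice of $b=\max\{a,e^{16}\}$ and of $N_0$ adapted to the last condition in \eqref{fa3}.
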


\begin{proof}
Fix $n\ge 1$, $T_0>0$, and $0\le j\le N-1$. There exists $N_0\ge n+1$
for which the last condition in \eqref{fa3} is satisfied for all
$0\le t\le T_0$, $N\ge N_0$. 

There exists $a>0$ such that $\sup_{x\ge a} \log x/\sqrt{x} \le
(8A_0)^{-1/2}$. Let $b = \max\{a , e^{16}\}$.  Fix $0\le t\le T_0$.
If $tN^2\le b$, by Lemma \ref{s05},
\begin{equation*}
\max_{0\le k\le N-1} q_t(j,k)  \;\le\; C_0 e^{C_0 t} \;\le\;
\frac {\sqrt{b} \, C_0 e^{C_0 t}} {\sqrt{ N^2 t }} \;\cdot
\end{equation*}
On the other hand, if $tN^2\ge b$, $t$ fulfills all the assumptions of
the previous lemma.  This ends the proof.
\end{proof}

We conclude this section with a remark used several times in the
previous sections. Let $f_t(k) = \sum_{j\in \Lambda_N}
q_t(j,k)$. Thus, $f$ is the solution of \eqref{fa1} with initial
condition $f(k)=1$ for all $k\in\Lambda_N$. By Lemma \ref{s05},
for all $T>0$, there exists a finite constant $C_0$, depending only on
$E$, $\beta$ and $T$ such that
\begin{equation}
\label{f2.3}
\sup_{0\le t\le T} \max_{k\in\Lambda_N} \sum_{j\in \Lambda_N}
q_t(j,k) \;=\; \sup_{0\le t\le T} \max_{k\in\Lambda_N} f_t(k) \;\le\;
C_0\; .
\end{equation}

\smallskip
\noindent{\bf Acknowledgements.}
P. G. thanks CNPq (Brazil) for support through the research project
``Additive functionals of particle systems'', Universal
n. 480431/2013-2, also thanks FAPERJ ``Jovem Cientista do Nosso Estado''
for the grant E-25/203.407/2014 and the Research Centre of Mathematics
of the University of Minho, for the financial support provided by
``FEDER'' through the ``Programa Operacional Factores de Competitividade
COMPETE'' and by FCT through the research project
PEst-OE/MAT/UI0013/2014.

\end{document}